\documentclass[12pt]{article}
\usepackage{style}
\pdfminorversion=7

\title{MIRROR SYMMETRY AND FUKAYA CATEGORIES OF SINGULAR HYPERSURFACES}
\date{\today}

\author{Maxim Jeffs}

\begin{document}

\maketitle

\begin{abstract}
We consider a definition of the Fukaya category of a singular hypersurface proposed by Auroux, given by localizing the Fukaya category of a nearby fiber at Seidel's natural transformation, and show that this possesses several desirable properties. Firstly, we prove an A-side analog of Orlov's derived Kn\"orrer periodicity theorem by showing that Auroux's category is derived equivalent to the Fukaya-Seidel category of a higher-dimensional Landau-Ginzburg model. Secondly, we describe how this definition implies homological mirror symmetry for some large complex structure limit degenerations of abelian varieties.
\end{abstract}

\tableofcontents

\section{Introduction}

At a basic level, homological mirror symmetry (HMS) conjectures a relationship between the Fukaya category of a K\"ahler manifold $Y$ and the category of coherent sheaves on a `mirror' K\"ahler manifold $\check{Y}$ of the same dimension. This is also expected to hold in the reverse direction, relating the Fukaya category of the mirror $\check{Y}$ and the coherent sheaves on the original K\"ahler manifold $Y$. In many natural instances of mirror symmetry (such as when $Y$ is non-compact), the `intrinsic' mirror $\check{Y}$ is often expected to be a singular variety. However, a principled definition of the Fukaya category of $\check{Y}$ is lacking in this case. Some of the most important instances, discussed in detail in \S \ref{sec:pants}, are the higher-dimensional pairs of pants $\Pi_n = \set{x_1 + \cdots + x_{n+1} + 1 =0} \subset (\CC^{\ast})^{n+1}$, whose naturally-constructed mirrors are given by the singular hypersurfaces $\set{z_1 \cdots z_{n+1} =0 } \subset \CC^{n+1}$. Using tropical techniques, it is possible to decompose other types of non-compact K\"ahler manifolds into products of pairs of pants of various dimensions; thus understanding mirror symmetry in such cases is of crucial importance for approaches to proving HMS that rely on gluing techniques. 

Moreover, one of the principal desiderata for any approach to constructing a mirror space is that the mirror of the mirror should be the original K\"ahler manifold; in other words, HMS should apply in both directions. However, this immediately runs into the problem that many `intrinsic' mirror constructions naturally produce singular varieties, and without an understanding of their Fukaya categories, it is difficult to proceed further. While incredible progress has been made understanding mirror symmetry for smooth K\"ahler manifolds, outside of the orbifold case little has been written about mirror symmetry for the $A$-model of singular varieties: discussion is included in \S \ref{sec:ms}.

In the case of singular hypersurfaces, the key insight of Auroux is that, while the symplectic geometry of the hypersurface itself may be difficult to understand intrinsically, given a smoothing of this hypersurface, the nearby fibers are perfectly good smooth symplectic manifolds, and their Fukaya categories come equipped with extra algebraic data: a set of morphisms coming from counts of certain holomorphic sections which capture which parts of the manifold degenerate when passing to the singular fiber. In accordance with the philosophy of perverse schobers \cite{PervI}, the invariant cycles theorem suggests that performing the categorical localization at these morphisms would therefore yield the `Fukaya category' of the singular hypersurface. As we shall illustrate in \S \ref{sec:cis}, this technique of localization at algebraic monodromy data has a broader domain of geometric applicability. A precise version of Auroux's definition will be given in \S \ref{sec:cup}.

To summarize this definition, let $X$ be a Stein manifold. For a singular symplectic fibration $f:X \to \CC$ with a single singular fiber over $0$, Seidel \cite{SHasHH,LFI} defines a natural transformation $s: \mu \to \mathrm{id}$ where $\mu$ is the clockwise monodromy functor acting on the wrapped Fukaya category $\scr{W}(f\inv(t))$ of the general fiber (see \S \ref{sec:defns} for our definition). The following definition for the wrapped Fukaya category of the singular fiber was proposed by Auroux:

\begin{restatable}{definition}{auroux}(Auroux)\label{defn:auroux} Suppose a singular symplectic fibration $f:X \to \CC$ has precisely one singular fiber, over $0$. Then the wrapped Fukaya category of $f\inv(0)$ is defined to be the localization of the wrapped Fukaya category of a nearby fiber $f\inv(t)$ for sufficiently small $t\neq 0$ at the natural transformation $s: \mu \to \mathrm{id}$:
\begin{equation*}
    D\scr{W}(f\inv(0)) = D\scr{W}(f\inv(t))[s\inv]
.\end{equation*}
\end{restatable}

Here this is the localization of $A_{\infty}$-categories in the sense of \cite{LO} and $D$ denotes the category of twisted complexes. The definition can also be extended to the case of monotone fibers, or any situation where Seidel's natural transformation is defined.

In this paper, we shall prove several results that illustrate why this is indeed the correct definition of the Fukaya category of a singular hypersurface: an A-side analog of Orlov's derived Kn\"orrer periodicity theorem (Theorem \ref{thm:main_thm}), and several homological mirror symmetry equivalences at the large complex structure limit (Theorems \ref{thm:new_theorem} and \ref{thm:lcsl}), described in detail in the following sections.

We shall give a heuristic description of how Definition \ref{defn:auroux} has a very natural relation to a version of Biran-Cornea's Lagrangian cobordism groups in \S \ref{sec:cobs}. There is also a natural analog of the `nearby cycles' functor one can construct that restricts Lagrangians from the wrapped Fukaya category of a punctured neighbourhood of the singular fiber to the wrapped Fukaya category of the singular fiber itself (cf. \cite{Speculations}). 

\subsection{Kn\"orrer Periodicity} \label{sec:cis}

The first theorem is inspired by the derived Kn\"orrer periodicity theorem of Orlov \cite[Corollary 3.2]{Orlov}, which says that if $X$ is a smooth quasi-projective variety, and $f:X \to \CC$ is a regular function with $f\inv(0)$ smooth, then there is an equivalence of categories
\begin{equation*}
    D^b \mathrm{Coh}(f\inv(0)) \to \mathrm{Sing}(X \times \CC, z f)
\end{equation*}
where $z$ is the coordinate on $\CC$.

Though \cite{Orlov} stated this theorem in the case where $f\inv(0)$ is smooth, the result holds generally for any hypersurface (see \cite[Theorem 1.2]{Hirano}): it hence allows us to study such singular hypersurfaces in terms of an LG model on the smooth variety $X \times \CC$. It was conjectured by Orlov in \cite{Orlov} that the same relation should hold for the $A$-model: by analogy with Orlov's result, we might expect the wrapped Fukaya category of $f\inv(0)$ to be equivalent to the (fiberwise-wrapped) Fukaya-Seidel category of the Landau-Ginzburg model $(X \times \CC, z f)$, denoted $\scr{W}(X \times \CC, zf)$. This is our main result. 

\begin{restatable}{theorem}{mainthm}(Derived Kn\"orrer Periodicity)\label{thm:main_thm}
Suppose that $X$ is a smooth affine variety with an embedding $X \to \CC^N$ inducing a Stein structure on $X$, and suppose $f:X \to \CC$ is the restriction of a polynomial function on $\CC^N$. If $f$ has a single critical fiber $f\inv(0)$ then for sufficiently small $t\neq 0$ there is a quasiequivalence of $A_{\infty}$-categories
\begin{equation*}
    D^{\pi}\scr{W}(f\inv(t))[s\inv] \to D^{\pi}\scr{W}(X \times \CC, z f)
.\end{equation*}
\end{restatable}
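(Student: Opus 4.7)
The strategy is to construct an explicit functor $\Phi\colon \scr{W}(f\inv(t)) \to \scr{W}(X \times \CC, zf)$ by a product-thimble construction, verify that it inverts Seidel's natural transformation $s$, and upgrade it to a quasi-equivalence on perfect derived categories. For $L \subset f\inv(t)$, I send $\Phi(L) := L \times \gamma$, where $\gamma \subset \CC$ is a ray from the origin (say the positive real axis); since $(zf)(\Phi(L)) = t \cdot \gamma$ is a ray in the base $\CC$, the product $\Phi(L)$ is admissible for the LG model. The higher $A_{\infty}$-structure is defined by a K\"unneth-style pairing: a wrapped Floer cocycle in $CW^*(L_0, L_1)$ is tensored with the canonical continuation morphism between perturbed copies of $\gamma$ in the base.

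The crucial step is showing that $\Phi(s)$ is a natural isomorphism. Seidel's transformation $s\colon \mu L \to L$ is defined via counts of pseudoholomorphic sections of $f$ over a small disc around the critical value $0$; the product of these sections with the identity in $\CC$ yields sections of $zf$ that define a continuation morphism $\Phi(\mu L) \to \Phi(L)$ in $\scr{W}(X \times \CC, zf)$. The key geometric observation is that fibers of $zf$ degenerate both at infinity in $X$ and as $f \to 0$, so fiberwise wrapping near the singular value $0$ of $zf$ implements the fiber monodromy $\mu$; this renders $\Phi(s)$ invertible in the fiberwise-wrapped category. The universal property of the Lyubashenko-Ovsienko localization \cite{LO} then yields a descent
\[
\bar{\Phi}\colon D^{\pi}\scr{W}(f\inv(t))[s\inv] \to D^{\pi}\scr{W}(X \times \CC, zf).
\]

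For full faithfulness, I would compute $HW^*_{FS}(\Phi(L_0), \Phi(L_1))$ via a cofinal sequence of fiberwise wrapping perturbations: each unit of wrapping near $\{zf = 0\}$ contributes a factor of $\mu$ applied to $L_1$, so the hom-space should be identified with the colimit $\varinjlim_k HW^*(L_0, \mu^k L_1)$, whose transition maps are Seidel's $s$ and which therefore computes exactly the localized hom-space per the explicit description in \cite{LO}. Essential surjectivity reduces to showing that the objects $\Phi(L)$ generate $\scr{W}(X \times \CC, zf)$, which can be approached via Lefschetz-thimble generation combined with parallel transport over the base $\CC$. The principal obstacle lies in the full faithfulness computation: identifying the transition maps in the colimit with $s$ requires matching the pseudoholomorphic strip counts for fiberwise wrapping near $\{zf = 0\}$ with the section counts defining $s$ in \cite{SHasHH}, which I expect to handle via a neck-stretching or SFT-style degeneration localizing the analysis near the singular fiber.
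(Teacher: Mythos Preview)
Your construction has a dimension error that breaks the functor at the outset. If $\dim_{\CC} X = n$, then a Lagrangian $L \subset f\inv(t)$ has real dimension $n-1$, while a Lagrangian in $X \times \CC$ must have real dimension $n+1$. Your proposed object $\Phi(L) = L \times \gamma$ has real dimension $n$, so it is isotropic but not Lagrangian in $X \times \CC$. There is also no evident thimble construction to repair this: the critical locus of $zf$ is $(f\inv(0)\times\{0\}) \cup (\mathrm{Crit}(f)\cap f\inv(0))\times\CC$, which does not contain $f\inv(t)$, so $L$ does not sit inside the critical locus of $zf$ and there is no Morse--Bott thimble over it. For the same reason your appeal to ``Lefschetz-thimble generation'' for $\scr{W}(X\times\CC, zf)$ is problematic: the critical locus of $zf$ is singular (it contains the singularities of $f\inv(0)$), so neither Lefschetz nor Morse--Bott thimble generation applies directly.

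The paper avoids these obstacles by passing through an auxiliary Landau--Ginzburg model $(X\times\CC, z(f-t))$, whose critical locus is the \emph{smooth} hypersurface $f\inv(t)\times\{0\}$ with an explicit Morse--Bott normal form $f\inv(t)\times\CC^2 \to \CC$, $(x,y)\mapsto xy$. One then has a genuine thimble functor $T\colon \scr{W}(f\inv(t)) \to \scr{W}(X\times\CC, z(f-t))$ sending $L$ to $L$ times the standard Lefschetz thimble in $\CC^2$, and this is shown to be a quasiequivalence (Theorem~\ref{thm:aak}). The passage from $z(f-t)$ to $zf$ is then recast as a Weinstein handle attachment on the generic fiber $X\setminus f\inv(t)$ versus $X\setminus f\inv(0)$ (Proposition~\ref{prop:surgery}), hence as a stop removal on the Fukaya--Seidel side; by \cite{GPS2} this is the quotient by a subcategory $\scr{D}$ of linking disks. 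Finally one matches $\scr{D}$ with the essential image of $\cap$ under $T$ (Theorem~\ref{thm:cones}), so that both sides are the \emph{same} quotient of the AAK-equivalent categories. In particular, the localization at $s$ is never analyzed directly via a colimit of wrapped Floer groups; it is handled purely categorically through Corollary~\ref{cor:alt_defn} and stop removal, bypassing the neck-stretching you anticipate needing.
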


Here $D^{\pi}$ is used to denote the idempotent-completion of the twisted complexes; we expect that this result does not hold without taking idempotent-completions. The geometric hypotheses could almost certainly be weakened, but they suffice for applications to mirror symmetry. 

Note that this is a \textit{different} form of `suspension' or `periodicity' for an LG model from that usually considered in singularity theory (compare \cite{suspending}). 

The results above need not be limited to singular hypersurfaces. For singular complete intersections, we can make the definition:

\begin{definition}\label{defn:ci}
Suppose $f_1, \dots, f_k: X \to \CC$ are holomorphic functions on a Stein manifold such that for $t_1, \dots, t_k \neq 0$, the intersection $f_{1}\inv(t_1) \cap \cdots \cap f_{k}\inv(t_k)$ is smooth. Then we define
\begin{equation*}
    D\scr{W}(f_{1}\inv(0) \cap \cdots \cap f_{k}\inv(0)) = D\scr{W}(f_{1}\inv(t_1) \cap \cdots \cap f_{k}\inv(t_k))[s_{1}\inv, \dots, s_{k}\inv]
.\end{equation*}
where $s_{i}$ are the natural transformations coming from the monodromy of $f_i$ around $t_i=0$, and $t_i$ are sufficiently small.
\end{definition}

By iterating our proof, we also expect the main theorem to admit a simple generalization to singular complete intersections:

\begin{conjecture}\label{conj:cis}
Under appropriate hypotheses on $f_1, \dots, f_k$, we have a  quasiequivalence of $A_{\infty}$-categories:
\begin{equation*}
    D^{\pi}\scr{W}(f_{1}\inv(0) \cap \cdots \cap f_{k}\inv(0)) \simeq D^{\pi}\scr{W}(X \times \CC^k, z_1 f_1 + \cdots + z_k f_k)
\end{equation*}
where $z_1, \dots, z_k$ are coordinates on $\CC^k$. 
\end{conjecture}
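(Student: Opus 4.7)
My plan is to prove Conjecture \ref{conj:cis} by induction on $k$, with the base case $k=1$ supplied by Theorem \ref{thm:main_thm}. The key observation is that at the level of $A_\infty$-categories, the iterated localization in Definition \ref{defn:ci} can be performed in stages, since the natural transformations $s_1, \dots, s_k$ come from monodromies around the independent critical values $t_i \to 0$ of distinct functions and should commute with one another; this lets me factor $[s_1^{-1}, \dots, s_k^{-1}]$ as $[s_1^{-1}, \dots, s_{k-1}^{-1}][s_k^{-1}]$.

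For the inner localization, I apply the inductive hypothesis on the smooth Stein manifold $f_k^{-1}(t_k)$, which sits inside $\CC^N$ as an affine subvariety on which $f_1, \dots, f_{k-1}$ restrict to polynomials and whose partial intersections are smooth by our hypotheses. Setting $W_{k-1} = z_1 f_1 + \cdots + z_{k-1} f_{k-1}$, this yields a quasiequivalence
\begin{equation*}
D^{\pi}\scr{W}\bigl(f_1^{-1}(0) \cap \cdots \cap f_{k-1}^{-1}(0) \cap f_k^{-1}(t_k)\bigr) \simeq D^{\pi}\scr{W}\bigl(f_k^{-1}(t_k) \times \CC^{k-1}, W_{k-1}\bigr).
\end{equation*}
It remains to localize both sides at $s_k$. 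The outer localization should be interpreted through an LG-relative version of Theorem \ref{thm:main_thm}: the function $f_k$ pulled back from $X$ to the LG model $(X \times \CC^{k-1}, W_{k-1})$ has smooth nearby fiber $(f_k^{-1}(t_k) \times \CC^{k-1}, W_{k-1})$, and localizing the Fukaya-Seidel category of this fiber at the Seidel natural transformation for $f_k$ should produce the Fukaya-Seidel category of $(X \times \CC^{k-1} \times \CC, W_{k-1} + z_k f_k) = (X \times \CC^k, W_k)$, which is precisely the desired right-hand side.

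The main obstacle is establishing this LG-relative form of Theorem \ref{thm:main_thm}: I would need to set up the wrapped Fukaya-Seidel category of an LG model equipped with an auxiliary holomorphic function, define Seidel's natural transformation for such a function on an LG total space, and adapt the proof of the main theorem with the ambient Stein manifold replaced by the LG model $(X \times \CC^{k-1}, W_{k-1})$. Since the symplectic geometry of $f_k$ that enters (its parallel transport, monodromy, and thimble structure) is pulled back from $X$ and interacts only trivially with $W_{k-1}$, the hope is that the proof extends by carrying $W_{k-1}$ through as passive data, with the generating collections and $A_\infty$-morphism computations of the $k=1$ case transporting verbatim. A more direct but more laborious alternative would identify a common generating set on the two sides---iterated vanishing thimbles for $(f_1, \dots, f_k)$ on the left versus Lefschetz thimbles of $W_k$ on the right---and check the full $A_\infty$-structure matches by a single large cobordism argument rather than by induction.
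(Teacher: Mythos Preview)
Your proposal matches the paper's own strategy essentially verbatim. Note that this statement is labeled a \emph{conjecture} in the paper and is not proved there; the paper only sketches how a proof should go, writing that Conjecture~\ref{conj:cis} ``could be proved by an extension of the results in this paper'' and that it would follow from a relative version of Theorem~\ref{thm:main_thm}, stated as Conjecture~\ref{conj:rel_thm}. That relative statement is exactly the ``LG-relative form of Theorem~\ref{thm:main_thm}'' you identify as the main obstacle, and the paper likewise leaves it open (proving only the fully faithful embedding of Theorem~\ref{thm:rel_AAK} as a partial step). So you have correctly reconstructed the intended inductive scheme and correctly located the gap; there is nothing further to compare.
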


We shall discuss below how this conjecture could be proved by an extension of the results in this paper.

\subsection{Mirror Symmetry} \label{sec:ms}

The definition of the Fukaya category of a singular hypersurface given above manifestly depends on a choice of smoothing. This is not only desirable, but in fact crucial, for homological mirror symmetry purposes. Classically, mirror symmetry is a relation between a K\"ahler manifold and a large complex structure limit (LCSL) family of complex manifolds. Homological mirror symmetry is expected to be an involution, so it is important to have a notion of the Fukaya category of the singular fiber of this family. Given that the choice of the mirror depends on the entire degeneration, it is not surprising that the definition of the Fukaya category of the singular fiber should involve the data of the smoothing. 


On the other hand, since the germ of an isolated hypersurface singularity has a smooth and connected versal deformation space \cite[p. 144]{KM}, a simple homotopy argument shows that the Fukaya category (of the germ) is independent of the choice of smoothing in this case. Hence it provides a (potentially interesting) symplectic invariant of isolated hypersurface singularities. In general, extra data must be provided for the Fukaya category to be uniquely specified: examples are provided in \S \ref{sec:hms}. Expectations from mirror symmetry suggest that this extra data should take the form of a \textit{log structure} on $f\inv(0)$, since in good cases this is expected to determine a smoothing of $f\inv(0)$. It may be possible to formulate an intrinsic construction of the wrapped Fukaya category of a singular hypersurface with a log structure using Parker's theory of holomorphic curves in \textit{exploded manifolds} (certain log-schemes have the structure of exploded manifolds \cite{Parker}). Alternatively, from the perspective of the LG model $(X \times \CC, z f)$, the critical locus $f\inv(0) \times \CC$ comes with a \textit{$(-1)$-shifted symplectic structure}: we conjecture that this extra structure is also sufficient to determine the wrapped Fukaya category, which could be constructed intrinsically using a formalism such as Joyce's theory of $d$-critical loci \cite{Joyce}.

Mirror symmetry results where the symplectic side is an LG model of the form $(X \times \CC, z f)$, can be recast via Theorem \ref{thm:main_thm} into mirror symmetry statements about the singular variety $f\inv(0)$. For instance, two papers by Nadler  \cite{NadlerI, NadlerII} study a microlocal version of the A-model of $(\CC^{n+1}, z_1 \cdots z_{n+1})$. In this setting, our periodicity theorem admits another proof, which we shall sketch in \S \ref{sec:pants}, as well as a simple proof of Nadler's result in our terminology by applying work of Abouzaid-Auroux \cite{AA}. We illustrate how our definition allows this to be generalized to the case of complements of hypersurfaces in toric varieties, using ideas from the forthcoming sequel to \cite{AA}. 

This is a special case of a more general mirror symmetry statement, which can be understood in the context of the Gross-Siebert program (for notation, see \S \ref{sec:gs}). Suppose $B$ is an integral affine manifold, together with a polyhedral decomposition $\mathscr{P}$ and choice of a strictly convex integral piecewise-linear multivalued function $\varphi$, and let $(\check{B}, \check{\mathscr{P}}, \check{\varphi})$ be the discrete Legendre dual data. Let  
$\scr{X}_{\varphi} \to D$ and $\check{\scr{X}}_{\check{\varphi}} \to D$ be the toric degenerations of $X(B) = TB/T^{\ZZ}B$ and $X(\check{B}) = T \check{B}/T^{\ZZ}\check{B}$ respectively that are constructed by Gross and Siebert. A \textit{large volume limit} is the complement $X(B) \setminus s\inv(0)$ where $s$ is a section of the ample line bundle $\scr{L}_{\varphi}$ associated to $\varphi$, while a \textit{large complex structure limit} is the central fiber $\scr{X}_{\varphi}^0$ of a toric degeneration.

\begin{restatable}{theorem}{newtheorem}\label{thm:new_theorem}
When $B = M_{\RR}/\Gamma$ is an integral affine torus with a polyhedral decomposition $\mathscr{P}$ and choice of a strictly convex piecewise-linear multivalued function $\varphi$ with integral slopes, the large complex structure limit $\check{\scr{X}}_{\check{\varphi}}^0$ is homologically mirror to the large volume limit $X(B) \setminus s\inv(0)$, that is, there is a quasiequivalence of categories:
\begin{equation*}
    D^{\pi} \mathscr{F}(\check{\scr{X}}_{\check{\varphi}}^0) \simeq D^b \mathrm{Coh}(X(B)\setminus s\inv(0))
\end{equation*}
\end{restatable}

One corollary of this theorem is a mirror symmetry statement between the \textit{Fukaya category} of an elliptic curve with $n$ nodes, and the derived category of coherent sheaves of an elliptic curve with $n$ punctures. It is our understanding that these instances of homological mirror symmetry were not previously known. More generally, we have

\begin{restatable}{theorem}{lcsl}\label{thm:lcsl}
Given a homological mirror symmetry equivalence between $X(B)$ and $X(\check{B})$ that identifies the (clockwise) monodromy functor $\mu$ around the large complex structure limit of $X(\check{B})$ with the tensor product by the inverse of a line bundle $\scr{L}\inv$ on $X(B)$ whose first Chern class $c_1(\scr{L})$ is the class of the K\"ahler form on $X(B)$; then the large complex structure limit $X_0(\check{B})$ of $X(\check{B})$ is homologically mirror to a large volume limit of $X(B)$:
\begin{equation*}
    D^{\pi} \mathscr{F}(X_0(\check{B})) \simeq D^b \mathrm{Coh}(X(B)\setminus s\inv(0))
\end{equation*}
where $s$ is a section of $\scr{L}$.
\end{restatable}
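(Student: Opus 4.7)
The plan is to apply Definition \ref{defn:auroux} and then transport the resulting A-side localization across the given HMS equivalence, reducing the question to a standard algebro-geometric identification. By Auroux's definition applied to $X_0$,
\[
D^\pi \mathscr{F}(X_0) = D^\pi \mathscr{F}(X)[\tau\inv],
\]
where $\tau: \mu \to \mathrm{id}$ is Seidel's natural transformation on the Fukaya category of the nearby smooth fiber. The hypothesized HMS equivalence $D^\pi \mathscr{F}(X) \simeq D^b\mathrm{Coh}(\check{X})$ intertwines $\mu$ with $(-) \otimes \scr{L}\inv$, so it sends $\tau$ to a natural transformation $\sigma: (-) \otimes \scr{L}\inv \to \mathrm{id}$ of endofunctors of $D^b\mathrm{Coh}(\check{X})$. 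Such natural transformations are classified by $\mathrm{Hom}(\scr{L}\inv, \mathcal{O}_{\check{X}}) \cong H^0(\check{X}, \scr{L})$, and we let $s$ be the section of $\scr{L}$ classifying $\sigma$ (this agrees with the section named in the statement). Consequently
\[
D^\pi \mathscr{F}(X_0) \simeq D^b\mathrm{Coh}(\check{X})[\sigma\inv].
\]

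It then remains to identify this B-side localization with $D^b\mathrm{Coh}(\check{X} \setminus s\inv(0))$. Evaluating $\sigma$ at a sheaf $\mathcal{F}$ yields the multiplication map $\mathcal{F}\otimes \scr{L}\inv \xrightarrow{\cdot s} \mathcal{F}$, which is an isomorphism precisely on the open complement $U := \check{X}\setminus s\inv(0)$. The restriction functor $j^*: D^b\mathrm{Coh}(\check{X}) \to D^b\mathrm{Coh}(U)$ therefore inverts $\sigma$ at every object, and by the universal property of $A_\infty$-localization \cite{LO} factors through $D^b\mathrm{Coh}(\check{X})[\sigma\inv]$. To show this factorization is a quasi-equivalence I would combine two classical facts: (i) the Verdier quotient of $D^b\mathrm{Coh}(\check{X})$ by the thick subcategory $D^b_Z\mathrm{Coh}(\check{X})$ of complexes supported on $Z = s\inv(0)$ is equivalent, after idempotent completion, to $D^b\mathrm{Coh}(U)$ via $j^*$; and (ii) inverting $\sigma$ is equivalent to this Verdier quotient, since the cones of the components of $\sigma$ have the form $\mathcal{F} \otimes \mathrm{Cone}(\scr{L}\inv \xrightarrow{\cdot s} \mathcal{O}_{\check{X}})$, which are supported on $Z$ and (together with their twists) generate $D^b_Z\mathrm{Coh}(\check{X})$ as a thick subcategory.

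The main obstacle is carrying out (ii) rigorously in the $A_\infty$-setting: one must verify that the cones of the components of $\sigma$ generate the \emph{full} thick subcategory $D^b_Z\mathrm{Coh}(\check{X})$, and not a proper thick subcategory thereof. I would handle this by working with a compact generator $G$ of $D^b\mathrm{Coh}(\check{X})$ whose restriction $j^*G$ generates $D^b\mathrm{Coh}(U)$---for instance $G = \bigoplus_{k \geq 0} \scr{L}^{-k}$, using that $\scr{L}$ is ample by the Chern-class hypothesis---and reducing the claim to a computation on the graded endomorphism algebra $\bigoplus_k \mathrm{Hom}(G, G \otimes \scr{L}^{k})$, where inverting $\sigma$ becomes classical ring-theoretic localization at $s$ and the localized algebra computes morphisms in $D^b\mathrm{Coh}(U)$. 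The idempotent completion enters in an essential way because sheaves on $U$ arise as summands, rather than as direct restrictions, of sheaves on $\check{X}$.
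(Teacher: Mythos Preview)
Your proposal is correct and follows essentially the same route as the paper: apply Definition~\ref{defn:auroux} on the A-side, transport Seidel's natural transformation across the assumed HMS equivalence to multiplication by a section $s \in H^0(\check{X},\scr{L})$, and then identify the B-side localization with $D^b\mathrm{Coh}(\check{X}\setminus s^{-1}(0))$. The only difference is that you sketch the last identification in detail, whereas the paper packages it as the standalone Lemma~\ref{lemma:complement} and simply invokes it (and additionally remarks that smoothness of the total space of the LCSL family, needed to apply Definition~\ref{defn:auroux}, is supplied by \cite{Chain}).
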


A homological mirror symmetry equivalence of the form required to apply Theorem \ref{thm:lcsl} is expected to follow from combining Abouzaid's family Floer theory with ideas from the Gross-Siebert program (see \cite{theta}). Further explanation is provided in \S \ref{sec:gs}: these arguments are necessarily somewhat heuristic, neglecting points such as local systems, brane structures, and so forth. Theorem \ref{thm:lcsl} should be taken as an illustration that Definition \ref{defn:auroux} should yield expected mirror symmetry equivalences. 

\begin{remark}
Many strategies for proving homological mirror symmetry work in the opposite direction to Theorem \ref{thm:lcsl}: start by proving an equivalence for the large complex structure limit (LCSL, on the \textit{B-side}) and the large volume limit (LVL, on the \textit{A-side}), and then deduce HMS for the nearby fibers using a deformation theory argument. While understanding the Fukaya category of the LVL is easier than understanding that of the nearby fibers, the LCSL is a \textit{singular} variety and so the Fukaya category is not defined conventionally but only by reference to the nearby fibers. This provides some justification for the difference in approach; of course, one can combine both approaches to gain an understanding of HMS at all points of the moduli space.
\end{remark}

\subsection{Outline of Proof of Main Theorem}

We will first give a proof of an upgraded form of a theorem of Abouzaid-Auroux-Katzarkov \cite[Corollary 7.8]{AAK} in our setup, which may be of independent interest:

\begin{restatable}{theorem}{AAK}(Abouzaid-Auroux-Katzarkov Equivalence)\label{thm:aak}
Suppose that $X$ is a smooth affine variety with an embedding $X \to \CC^N$ inducing a Stein structure on $X$, and suppose $f:X \to \CC$ is the restriction of a polynomial function on $\CC^N$. If $f$ has a single critical fiber $f\inv(0)$ then for sufficiently small $t \neq 0$, we have a quasiequivalence of $A_{\infty}$-categories:
\begin{equation*}
    T:\scr{W}(f\inv(t)) \to \scr{W}(X \times \CC, z(f-t))
\end{equation*}
given by taking thimbles over admissible Lagrangians in the singular locus $f\inv(t)$.
\end{restatable}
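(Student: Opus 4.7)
The plan is to build the thimble functor $T$ explicitly via a normal form for $W = z(f-t)$ near the critical locus, then check full faithfulness by a local-to-global analysis of moduli, and finally invoke a generation result for essential surjectivity. Since $t$ is a regular value of $f$ by the hypothesis that $0$ is the only critical fiber, the critical locus of $W$ is the smooth codimension-two complex submanifold $f\inv(t) \times \{0\}$ of $X \times \CC$, with nondegenerate complex Hessian in the normal directions. By a symplectic Morse-Bott normal form I would choose a tubular neighborhood $\scr{U}$ of $f\inv(t) \times \{0\}$ on which $W$ takes the model form $zw$, with $w$ a holomorphic coordinate transverse to $f\inv(t)$. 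Given an admissible $L \subset f\inv(t)$, I would define $\tau(L) \cap \scr{U}$ as $L$ fibered over the Lagrangian $\{w = \bar{z}\} \subset \CC^{2}_{(z,w)}$ (which maps under $zw$ to the positive real ray), and then extend $\tau(L)$ outside $\scr{U}$ to a globally admissible Lagrangian whose image under $W$ escapes along a fixed horizontal ray. Standard counts of perturbed holomorphic polygons then promote this construction to the desired $A_{\infty}$-functor.

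For full faithfulness, my goal is to reduce Floer theory on thimbles to Floer theory on their bases in $f\inv(t)$. After a small wrapping, the thimbles $\tau(L_0)$ and $\tau(L_1)$ over a common ray intersect transversely inside $\scr{U}$ in canonical bijection with the wrapped intersections $L_0 \cap L_1$ inside $f\inv(t)$. To upgrade this to an identification of $A_{\infty}$-operations, I would argue that every finite-energy holomorphic polygon in $X \times \CC$ with boundary on thimbles must lie in $\scr{U}$ and project, under the normal-form projection $\scr{U} \to f\inv(t)$, to a holomorphic polygon in $f\inv(t)$ with boundary on the $L_i$. The main tools here are an energy or monotonicity estimate exploiting the local form $zw$ together with the antiholomorphic shape $\{w = \bar{z}\}$ of the normal thimble, and a neck-stretching argument (or a $\CC^{\ast}$-equivariant perturbation in the $z$-variable) forcing any limiting configuration to lie over the critical locus. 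Combined with matching gradings and orientations, this will give the desired isomorphism of hom-complexes and higher $A_{\infty}$-products.

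For essential surjectivity I would invoke the generation result that, for a Landau-Ginzburg model with Morse-Bott critical locus over $\CC$ having a single critical value, the Fukaya-Seidel category is generated by thimbles over a generating set of admissible Lagrangians in the critical locus; this reduces essential surjectivity to generation of $\scr{W}(f\inv(t))$, which is known. The hardest step will be the moduli-theoretic reduction in the full faithfulness argument: confining all contributing polygons to $\scr{U}$ and showing they descend cleanly to polygons in $f\inv(t)$ requires combining the local normal form with a carefully chosen almost complex structure and delicate compactness and energy estimates for admissible Lagrangians in the LG model, and this is where the bulk of the technical work lies.
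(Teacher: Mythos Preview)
Your overall architecture matches the paper's: build Morse--Bott thimbles in a local product model near the critical locus $f^{-1}(t)\times\{0\}$, prove full faithfulness by confining polygons to the fiber, and deduce essential surjectivity from a thimble generation statement (the paper's Proposition~\ref{prop:generation}). So the strategy is right.

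The substantive difference is in how you propose to confine holomorphic polygons. You reach for energy/monotonicity estimates, neck-stretching, or a $\CC^{\ast}$-equivariant perturbation, and you flag this as ``where the bulk of the technical work lies.'' The paper avoids all of that. After deforming the Liouville structure so that both $f^{-1}(B_\delta(t))\cong f^{-1}(t)\times\CC$ and a half-plane in the $F_t$-base are genuine symplectic products, and choosing the almost-complex structure to be a product on the Morse--Bott neighborhood, the map $F_t=z(f-t)$ is itself $J$-holomorphic near all intersection points. Thimbles in distinct positions of the cofinal wrapping sequence project under $F_t$ to distinct rays through the origin, so any polygon projects to a holomorphic disk in $\CC$ with boundary on rays and corners at $0$; the maximum principle forces the projection to be constant, hence the polygon lies in $F_t^{-1}(0)$. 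A second maximum principle, using the holomorphic projection $f^{-1}(t)\times\CC^2\to\CC^2$ in the local model, then forces the polygon into $f^{-1}(t)$. This is elementary and replaces your proposed hard step entirely.

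Two further points the paper handles that your outline omits. First, regularity: once polygons are trapped in $f^{-1}(t)$, one still needs them to be regular as curves in $X\times\CC$. The paper splits the linearized Cauchy--Riemann operator along the product $f^{-1}(t)\times\CC^2$; the fiber factor is surjective by choice of $J$ on $f^{-1}(t)$, and the $\CC^2$ factor has index~$0$ and is injective (hence surjective) by \cite[Lemma~11.5]{SeidelBook}, using that successive thimbles lie over rays rotated clockwise. Second, the wrapping setup: rather than ``a small wrapping,'' the paper works in the GPS framework of directed categories with cofinal sequences $T_i^{(j)}$, so that for $j>j'$ the thimbles project to distinct rays meeting only at the origin. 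This is what makes the first maximum-principle step go through cleanly; your ``common ray'' picture would not immediately give the confinement.
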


In \cite{AAK} this was expected to be an equivalence of categories, assuming a generation result of Abouzaid-Ganatra \cite{AG} which should follow from Proposition \ref{prop:generation}. For a similar result, see \cite[Lemma A.26]{abouzaidSmith}. This theorem could be considered as an open-string analog of the LG-CY correspondence.

There is also a relative version of this statement, for fiberwise stopped Fukaya-Seidel categories, which mirrors a result of Orlov:
\begin{restatable}{theorem}{relAAK}\label{thm:rel_AAK}
Suppose that $X$ is a smooth affine variety with an embedding $X \to \CC^N$ inducing a Stein structure on $X$, and suppose $f, g:X \to \CC$ are restrictions of polynomial functions on $\CC^N$. If $f$ has single critical fiber $f\inv(0)$ then for sufficiently small $t \neq 0$ we have a fully faithful functor
\begin{equation*}
    \scr{W}(f\inv(t), g) \to \scr{W}(X \times \CC, z(f-t), g)
.\end{equation*}
\end{restatable}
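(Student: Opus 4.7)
The plan is to extend the thimble construction of Theorem \ref{thm:aak} to the partially wrapped setting. Given an admissible Lagrangian $L$ in $\scr{W}(f\inv(t), g)$, I would form its thimble $T(L) \subset X \times \CC$ by parallel transport along a horizontal arc in the $z$-plane, exactly as in the proof of Theorem \ref{thm:aak}. Since the stopping datum $g$ on the target side is pulled back from $X$ and depends only on the $X$-coordinate of $X \times \CC$, admissibility of $L$ with respect to $g|_{f\inv(t)}$ lifts automatically to admissibility of $T(L)$ with respect to the pulled-back $g$, simultaneously with the existing admissibility of $T(L)$ with respect to $z(f-t)$. The higher $A_{\infty}$-structure maps then extend exactly as in Theorem \ref{thm:aak} to give a functor $T: \scr{W}(f\inv(t), g) \to \scr{W}(X \times \CC, z(f-t), g)$.

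For full faithfulness, I would compute Floer cohomology between thimbles using the strip-projection argument underlying Theorem \ref{thm:aak}: holomorphic strips between $T(L_0)$ and $T(L_1)$ in $X \times \CC$ project to holomorphic strips between $L_0$ and $L_1$ in $f\inv(t)$, and this correspondence should be bijective on regular moduli spaces for a suitable choice of almost complex structure. Adding the stop at $g$ should not disturb the correspondence because the stopping datum on $X \times \CC$ is pulled back from $X$ and hence behaves compatibly under projection: strips meeting the stop upstairs descend to strips meeting the stop downstairs, and conversely.

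The main obstacle will be choosing wrapping data compatibly on both sides. On the target one needs a Hamiltonian respecting admissibility with respect to both $z(f-t)$ and $g$, while on the source only admissibility with respect to $g$ is required. A natural choice is a sum of the form $H_{z(f-t)} + H_g$, where $H_{z(f-t)}$ generates the standard thimble wrapping in the $z$-direction and $H_g$ is pulled back from a wrapping Hamiltonian on $f\inv(t)$ adapted to $g$. Verifying that the corresponding moduli spaces yield the desired quasi-isomorphism on morphism spaces will require an index computation ruling out contributions from strips that wrap nontrivially in the $z$-direction, extending the analogous estimate used in Theorem \ref{thm:aak}.
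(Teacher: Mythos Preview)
Your proposal is correct and follows essentially the same approach as the paper: the paper's entire argument for Theorem \ref{thm:rel_AAK} is the single sentence ``Because of this construction, the proof of Theorem \ref{thm:aak} carries over exactly,'' where ``this construction'' refers to the relative stop of Definition \ref{defn:relative}. Your outline of how the thimble construction and the maximum-principle disk-counting argument transfer is precisely what that sentence is summarizing.

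One minor remark: your language of ``strip-projection,'' explicit wrapping Hamiltonians $H_{z(f-t)} + H_g$, and an index estimate to rule out $z$-direction wrapping is slightly out of register with the framework actually used in the proof of Theorem \ref{thm:aak}. There the wrapping is implemented via cofinal sequences in the sense of \cite{GPS}, and curves are confined to the fiber by iterated maximum-principle arguments for the holomorphic projections $F_t$ and $f$ rather than by an index computation. None of this affects the correctness of your strategy, but if you write it up you should phrase it in the same localization/cofinal-sequence language so that the extra stop $G$ (the fiberwise $g$-stop swept along an arc, as in Definition \ref{defn:relative}) visibly restricts to the stop $g|_{f\inv(t)}$ on the critical locus and the argument genuinely ``carries over exactly.''
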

Here the second category is fiberwise stopped with respect to $g$, as explained in Definition \ref{defn:relative}.

This should also imply a relative version of Theorem \ref{thm:main_thm}, which would then imply Conjecture \ref{conj:cis}:

\begin{conjecture}\label{conj:rel_thm}
Suppose $f:X \to \CC$ is a holomorphic function on a Stein manifold $X$ having a single critical fiber $f\inv(0)$; suppose $g:X \to \CC$ is another holomorphic function. Then for $\delta, |t|>0$ sufficiently small there is a quasiequivalence of $A_{\infty}$-categories
\begin{equation*}
    D^{\pi}\scr{W}(f\inv(t),g)[s\inv] \to D^{\pi}\scr{W}(X \times \CC, z f + \delta g)
.\end{equation*}
\end{conjecture}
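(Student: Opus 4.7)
The plan is to adapt the proof of Theorem \ref{thm:main_thm}, using Theorem \ref{thm:rel_AAK} in place of Theorem \ref{thm:aak}, and then identifying the resulting localized, fiberwise-stopped category with the perturbed LG model on the right-hand side.

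First, apply Theorem \ref{thm:rel_AAK} to obtain the fully faithful thimble functor
\begin{equation*}
T_g: \scr{W}(f\inv(t), g) \to \scr{W}(X \times \CC, z(f-t), g).
\end{equation*}
By exactly the same argument as in the proof of Theorem \ref{thm:main_thm}, the clockwise monodromy around the critical fiber becomes inner on the LG side via the Lefschetz-thimble exact triangle, so that $T_g$ sends Seidel's natural transformation $s: \mu \to \mathrm{id}$ to an isomorphism. Hence it descends after localization to a fully faithful functor
\begin{equation*}
\bar{T}_g: D^{\pi}\scr{W}(f\inv(t), g)[s\inv] \to D^{\pi}\scr{W}(X \times \CC, z(f-t), g)[s\inv],
\end{equation*}
which should be essentially surjective after idempotent completion by a fiberwise-stopped analogue of Proposition \ref{prop:generation}.

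The remaining and genuinely new step beyond Theorem \ref{thm:main_thm} is to identify
\begin{equation*}
D^{\pi}\scr{W}(X \times \CC, z(f-t), g)[s\inv] \simeq D^{\pi}\scr{W}(X \times \CC, zf + \delta g)
\end{equation*}
for sufficiently small $\delta, t > 0$. The geometric guiding principle is that the critical points of $zf + \delta g$ pick out precisely the critical points of $g|_{f\inv(0)}$ modulated by $z$, so this LG model should be viewed as a compactification of the LG data $(f\inv(0), g)$: the shift from $z(f-t)$ to $zf$ is absorbed in the localization at $s$, exactly as in Theorem \ref{thm:main_thm}, while the perturbation $\delta g$ absorbs the fiberwise stop at $g$. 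One natural route is to apply a fiberwise version of Theorem \ref{thm:aak} along the $g$-direction, realising the stopped category as a fiber of the two-parameter LG model $(X \times \CC^2, z(f-t) + w g)$, and then jointly degenerating $w \to \delta$ and $t \to 0$ while tracking a generating collection of thimbles.

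The main obstacle will be this last identification: rigorously controlling the interplay between the fiberwise stop, the deformation of the superpotential, and the localization at $s$, and in particular establishing the relative generation result that upgrades full faithfulness to a quasiequivalence. Once this is in hand, iterating the argument with successive pairs $(f_i, \delta_i)$ yields Conjecture \ref{conj:cis}, since each Kn\"orrer reduction commutes with the next by the same absorption principle.
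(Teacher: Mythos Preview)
The statement you are attempting to prove is labelled as a \emph{conjecture} in the paper and is not proved there; the paper only remarks that Theorem~\ref{thm:rel_AAK} ``should also imply a relative version of Theorem~\ref{thm:main_thm}'' and that, in a sequel, the identification of the relative Fukaya--Seidel category $\scr{W}(X,f,g)$ with $\scr{W}(X,f+\delta g)$ will be established under strong hypotheses. So there is no proof in the paper to compare your proposal against: your outline is, at the level of strategy, precisely the route the paper gestures at, and you have correctly isolated the genuinely missing ingredients (a relative analogue of Proposition~\ref{prop:generation}, and the identification of the fiberwise-stopped category with the perturbed LG model).

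That said, one step in your sketch is garbled and does not match how the argument for Theorem~\ref{thm:main_thm} actually runs. You write that ``$T_g$ sends Seidel's natural transformation $s:\mu\to\mathrm{id}$ to an isomorphism'' and then localize the \emph{target} at $s$. But $s$ is a natural transformation on $\scr{W}(f\inv(t),g)$, not on $\scr{W}(X\times\CC,z(f-t),g)$, so the expression $\scr{W}(X\times\CC,z(f-t),g)[s\inv]$ is not well-formed; and in the paper the thimble functor $T$ certainly does \emph{not} invert $s$. What the paper does (Corollary~\ref{cor:alt_defn}, Theorem~\ref{thm:cones}, Proposition~\ref{prop:stops}) is match the \emph{subcategory by which one quotients}: on the fiber side one quotients by the essential image of $\cap$ (equivalently, by the cones on $s$), and under $T$ this is carried to the subcategory $\scr{D}$ of linking disks, whose removal turns $z(f-t)$ into $zf$. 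The correct relative version of the argument should therefore read: (i) $T_g$ is fully faithful (Theorem~\ref{thm:rel_AAK}), and essentially surjective given a relative Proposition~\ref{prop:generation}; (ii) under $T_g$, the image of the relative $\cap$ is identified with the linking disks of the additional handles in the relative sector; (iii) quotienting both sides yields an equivalence $D^{\pi}\scr{W}(f\inv(t),g)[s\inv]\simeq D^{\pi}\scr{W}(X\times\CC,zf,g)$; and only then (iv) invoke the (conjectural, or sequel) identification $\scr{W}(X\times\CC,zf,g)\simeq\scr{W}(X\times\CC,zf+\delta g)$. Your final paragraph essentially says this, but the intermediate display and the ``sends $s$ to an isomorphism'' sentence should be rewritten to reflect the quotient-matching structure rather than an inversion of $s$.
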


We shall then show in \S \ref{sec:stein} that passing from $(X \times \CC, z(f-t))$ to $(X \times \CC, zf)$ can be rephrased as a stop-removal by carefully analyzing the Liouville geometry of the general fiber as $t$ changes. Hence, by the stop removal theorem of \cite{GPS2,Sylvan}, the category $\scr{W}(X \times \CC, zf)$ may be obtained as a quotient of the category $\scr{W}(X \times \CC, z(f-t))$ by a full subcategory $\scr{D}$ of linking disks. Lastly, we show in \S \ref{sec:proofs} that under the functor $T$ of Theorem \ref{thm:aak}, the essential image of the cones of the natural transformation $\mu \to \mathrm{id}$ on $\scr{W}(f\inv(t))$ split-generates the same full subcategory as $\scr{D}$, using a K\"unneth-type argument. Our Theorem \ref{thm:main_thm} then follows.

\subsection*{Acknowledgements}

First of all I would like to thank Denis Auroux for suggesting this problem as well as for his patience and important contributions throughout the process; I would also like to thank Mohammed Abouzaid, David Favero, Daniel \'Alvarez-Gavela, Sheel Ganatra, Paul Hacking, David Nadler, John Pardon, Vivek Shende, and Zack Sylvan for helpful conversations or correspondence, as well as Andrew Hanlon and Jeff Hicks for explaining their thesis work to me, and Paul Seidel for pointing out some inaccuracies in the published version. I would also like to thank the anonymous referee for many helpful comments and suggestions. This work was partially supported by the Rutherford Foundation of the Royal Society of New Zealand, NSF grant DMS-1937869, and by Simons Foundation grant \#385573.

This article differs from the version published in \textit{Adv. Math.} by corrections to \S 3, \S 5.3, new figures, updated references to papers that have now appeared, to agree with the version in the author's 2023 PhD thesis.

\clearpage

\section{Definitions and Conventions}\label{sec:defns}

We begin with a discussion of how to define the Fukaya-Seidel category of a Landau-Ginzburg model associated to a \textit{single} critical value. Suppose that $X$ is a smooth affine variety with an embedding $i: X \to \CC^N$; then $X$ becomes a Stein manifold with the Stein function $\phi: X \to \RR$ given by the restriction of $\phi(z) = |z|^2$ on $\CC^N$. Suppose that $f: \CC^N \to \CC$ is a polynomial with $0$ as its only critical value; by abuse of notation we will denote the restriction to $X$ also by $f$. Suppose for now that $\CC$ carries the standard Stein structure. We would like to turn the pair $(X,f)$ into a Landau-Ginzburg model, in particular, talk about its Fukaya-Seidel category. 

Firstly, the function $|f|^2$ defines a real polynomial on $\RR^{2N}$, and $X \subset \RR^{2N}$ is a real affine algebraic variety. It is a well-known result from real algebraic geometry that the set of points in $\RR$ for which the Malgrange condition for $|f|^2:X \to \RR$ fails is finite (for instance see \cite[Remark 3]{Malgrange} and take the intersection with the algebraic variety $X$). Recall that this Malgrange condition at $\lambda \in \RR$ says that there exists $R, \epsilon, \eta>0$ so that if $|z|>R$ and $||f|^2(z) - \lambda| \leq \epsilon$ then
\begin{equation*}
    |z| |\grad_X |f|^2| > \eta
.\end{equation*}
Without loss of generality, $|f|^2(0) = 0$, and since we have that $|\grad_X |f|^2| >0$ for $|f|^2 > 0$, we see that for any $\lambda \in \RR_{>0}$ for which the Malgrange condition holds, there exists a $C>0$ (depending on $\lambda$) so that on $|f|^2 = \lambda$ we have
\begin{equation*}
    |f|^2 < C |z| |\grad_X |f|^2|
.\end{equation*}
Now take $\delta>0$ strictly smaller than all points in $\RR_{>0}$ where the Malgrange condition fails for $|f|^2$ on $X$. Then there exists some constant $C$ so that for all $0<|f|^2 < \delta$ we have:
\begin{equation*}
    |f|^2 < C |z| |\grad_X |f|^2|
.\end{equation*}
Choose $m \in \NN$ sufficiently large so that $m>4C$ and for any constant $D>0$ define a Stein function via
\begin{equation*}
    \psi(z) = \phi(z) + D\phi(z)|f|^{2m}
\end{equation*}
which can be induced using the algebraic embedding $\tilde{i}: X \to \CC^{2N}$ given by $z \mapsto (i(z), \sqrt{D} f^m(z) i(z))$. This has a regular homotopy to the original Stein structure given by the family with $t\in[0,1]$
\begin{equation*}
    \psi_t(z) = \phi(z) + D\br{\frac{t + \phi(z)|f|^{2m}}{1 + t \phi(z)|f|^{2m}}}
.\end{equation*}

\begin{proposition}\label{prop:outwards}
The Liouville vector-field of $\psi$ is outward pointing along $|f|^2 = \delta$ for $D>0$ sufficiently large.
\end{proposition}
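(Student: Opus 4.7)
The plan is to evaluate $d|f|^2$ on the Liouville vector field $Z_\psi$ at points of the hypersurface $|f|^2 = \delta$ and establish positivity. Using the characterization $Z_\psi = \tfrac{1}{2}\nabla_{g_\psi}\psi$ and expanding $d\psi = (1+D|f|^{2m})\,d\phi + Dm\phi|f|^{2(m-1)}\,d|f|^2$ on $|f|^2 = \delta$, one obtains the splitting
\[
d|f|^2(Z_\psi) \;\propto\; (1+D\delta^m)\,\langle d\phi,\,d|f|^2\rangle_{g_\psi^{-1}} \;+\; Dm\delta^{m-1}\phi\,\|d|f|^2\|_{g_\psi^{-1}}^{\,2},
\]
where $\langle\cdot,\cdot\rangle_{g_\psi^{-1}}$ is the cometric on $T^*X$. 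The second term is manifestly positive, as $|f|^2 = \delta$ is a regular level set, so the task is to dominate the first, indefinite term for $D$ large.

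To control the cometric I would use the embedding $\tilde i:X\to\CC^{2N}$, $z\mapsto(z,\sqrt D f^m z)$, which realizes $g_\psi$ as the restriction of the flat Hermitian metric on $\CC^{2N}$. A direct calculation gives
\[
g_\psi(V,V) = |V|^2 + D|f|^{2(m-1)}\,\bigl|m\,df(V)\,z + f\,V\bigr|^2
\]
for $V\in T_zX\subset\CC^N$. From this formula one reads off both that $g_\psi \geq g_\phi$ (hence dually $\|d\phi\|_{g_\psi^{-1}}\leq 2|z|$) and the precise stretching behaviour of $g_\psi$ in directions where $df\neq 0$: the stretching factor in the direction of $\nabla|f|^2$ is of order $Dm^2|f|^{2(m-1)}|z|^2$ for $D$ large, which dually suppresses $\|d|f|^2\|_{g_\psi^{-1}}^{\,2}$ by the same factor.

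The Malgrange-type inequality $|f|^2 < C|z||\nabla_X|f|^2|$ on $|f|^2 = \delta$ then provides the quantitative input needed to compare the two terms: it gives $|z||\nabla_X|f|^2| > \delta/C$, which, combined with the explicit formula for $g_\psi$, shows that the suppression of $\|d|f|^2\|_{g_\psi^{-1}}^{\,2}$ is exactly compensated by the prefactor $Dm\delta^{m-1}\phi$ of the diagonal term, leaving a contribution of order $|\nabla_X|f|^2|^2/m$. A parallel analysis, together with the upper bound on $\|d\phi\|_{g_\psi^{-1}}$, bounds the ratio of the cross term to the diagonal term by a quantity of order $C/m < 1/4$, using the hypothesis $m>4C$, which establishes positivity for $D$ sufficiently large.

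The main obstacle is the careful accounting of the metric stretching: a naive Cauchy–Schwarz bound on the cometric pairing $\langle d\phi, d|f|^2\rangle_{g_\psi^{-1}}$ by $\|d\phi\|_{g_\psi^{-1}}\cdot\|d|f|^2\|_{g_\psi^{-1}}$ is insufficient, because $\|d|f|^2\|_{g_\psi^{-1}}$ itself is strongly suppressed when $df\neq 0$, and the two cancellations must be tracked in tandem. The explicit embedding formula for $g_\psi$ is essential for extracting the sharp estimates in the appropriate directions, and the role of the choice $m > 4C$ is to guarantee the slack needed in the final inequality.
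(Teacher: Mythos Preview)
Your approach diverges from the paper's and leaves the crucial estimate unproved. The paper works entirely in the \emph{fixed} metric induced from the original embedding $X\hookrightarrow\CC^N$: it expands $\langle\nabla_X|f|^2,\nabla_X\psi\rangle$, applies ordinary Cauchy--Schwarz in that metric (using only $|\nabla_X\phi|\le 2|z|$), cancels one factor of $|\nabla_X|f|^2|$, and arrives at the sufficient condition $4\delta < m\,|z|\,|\nabla_X|f|^2|$, which is precisely the Malgrange inequality combined with the choice $m>4C$. The only role of $D$ is the crude requirement $D>\delta^{-m}$; there is no analysis of how $g_\psi$ stretches and no cometric computation at all.

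You instead pass to the $g_\psi^{-1}$-pairing, on the reasonable grounds that $Z_\psi=\tfrac12\nabla_{g_\psi}\psi$, and you correctly observe that a naive Cauchy--Schwarz bound in that cometric is too weak: $\|d|f|^2\|_{g_\psi^{-1}}$ is suppressed as $D\to\infty$ while the prefactor $(1+D\delta^m)$ grows, and these do not balance. But having identified the obstacle, you only \emph{assert} that the explicit embedding formula yields a cross-to-diagonal ratio of order $C/m<1/4$; the extraction of these ``sharp estimates in the appropriate directions'' is never performed. Your own formula
\[
g_\psi(V,V)=|V|^2+D|f|^{2(m-1)}\bigl|m\,df(V)\,z+f\,V\bigr|^2
\]
contains an off-diagonal piece $f\,V$ that couples the $df$-direction to every tangent direction, so $g_\psi^{-1}$ is not block-diagonal in any evident basis and the claimed cancellation does not fall out by inspection. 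Since this estimate is the entire substance of your argument, the proposal is incomplete exactly at the point that matters.
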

\begin{proof}
We study the inner product
\begin{equation*}
    \langle \grad_X |f|^2, \grad_X \psi \rangle  = (1 + D|f|^{2m}) \langle \grad_X |f|^2, \grad_X \phi \rangle + m D |f|^{2m-2} \phi |\grad_X |f|^2|^2
.\end{equation*}
By the Cauchy-Schwarz inequality
\begin{equation*}
    (1 + D|f|^{2m}) |\langle \grad_X |f|^2, \grad_X \phi \rangle| \leq 2 (1 + D |f|^{2m}) |\grad_X |f|^2| |\phi|^{1/2}
\end{equation*}
and thus we will have
\begin{equation*}
    \langle \grad_X |f|^2, \grad_X \psi \rangle > 0
\end{equation*}
so long as
\begin{equation*}
    2(1 + D |f|^{2m}) <  m D |f|^{2m-2}|\phi|^{1/2} |\grad_X |f|^2|
\end{equation*}
since 
\begin{equation*}
    |\grad_X \phi| \leq |\grad |z|^2| = 2 |z|
.\end{equation*}
Rewriting this inequality gives
\begin{equation*}
    2\br{ \frac{1}{D |f|^{2m-2}} + |f|^2 } < m |\phi|^{1/2}|\grad_X |f|^2|
.\end{equation*}
If we take $D>1/\delta^{m}$ then
\begin{equation*}
    \frac{1}{D |f|^{2m-2}} < \delta
\end{equation*}
and hence
\begin{equation*}
     4 \delta <  m |z| |\grad_X |f|^2|
\end{equation*}
follows by our construction.
\end{proof}

Now define a stopped Liouville domain as follows. First, take the subset $X_0 = \set{|f|^2 \leq \delta} \cap \set{\psi \leq R} \subset X$, which by Proposition \ref{prop:outwards}, for $R$ sufficiently large and $\delta$ sufficiently small, gives a Liouville domain $\tilde{X}_0$ after rounding the corners. Then for a sufficiently small rounding, the hypersurface $F = f\inv(-\delta) \cap \set{\psi \leq R-\epsilon} $ sits inside $\set{|f|^2 = \delta} \cap \set{\psi \leq R-\epsilon}$, which is part of the contact boundary of $\tilde{X}_0$. The pair $(\tilde{X}_0,F)$ a sutured Liouville domain, and so by \cite[Lemma 2.32]{GPS} there is a (homotopically) unique Liouville sector associated to it which we will denote by $(X,f)$; moreover, this is independent of $R$ sufficiently large and $\delta$ sufficiently small. We will write $\scr{W}(X,f)$ for its partially wrapped Fukaya category as defined in \cite{GPS2}: this is the (fiberwise wrapped) \textbf{Fukaya-Seidel category} of the Landau-Ginzburg model $f:X \to \CC$ (associated to the single critical value $0$). 

\begin{remark}\label{remark:conventions}
We shall henceforth assume that this procedure has taken place and that the Stein function $\phi$ on $X$ is already given by $\psi$. We can always equivalently consider $(X,f)$ as a Liouville manifold with a stop $f\inv(-\infty)$ sitting inside the contact boundary at infinity. In this case, though we may informally discuss our Landau-Ginzburg models as living over all of $\CC$, their corresponding Liouville sectors are constructed as above and lie only over a small disk $\set{|z|^2 \leq \delta} \subset \CC$.
\end{remark}


Note that above all of the non-zero points where the Malgrange condition failed to hold are made to lie outside the Liouville domain. This has several useful consequences.

\begin{lemma}\label{lem:transport}
Symplectic parallel transport gives exact symplectomorphisms between smooth fibers of $f$ over $0<|f| < \epsilon$ for some $\epsilon>0$.
\end{lemma}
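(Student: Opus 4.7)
The plan is to verify two claims: (i) that the symplectic-orthogonal horizontal distribution over the punctured disk $0<|z|<\epsilon$ has complete flow between fibers, and (ii) that the resulting parallel transport is an exact symplectomorphism. First I would define the horizontal lift $\tilde{v}$ of $\partial_z \in T_z\CC^{\ast}$ at a smooth point $x \in f^{-1}(z)$ as the unique vector $\omega$-orthogonal to $\ker df_x$ satisfying $df_x(\tilde{v}) = \partial_z$. With respect to the K\"ahler metric $g$ induced by $\psi$, a standard computation gives $|\tilde{v}|_g \cdot |\nabla_X f|_g = 1$, so an upper bound on the horizontal lift reduces to a lower bound on $|\nabla_X f|_g$.

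For (i), completeness of the horizontal flow along paths in $0<|z|<\epsilon$ requires ruling out escape to infinity of the lifted trajectory in $X$; escape to the critical fiber is avoided by restricting to the punctured base. This is precisely where the Malgrange condition built into the construction of $\psi$ is used: the choice of $\delta$ ensures that on the slab $0 < |f|^2 < \delta$ we have the estimate $|f|^2 < C |z| |\nabla_X |f|^2|$ derived in the previous pages, which after unwinding yields a uniform lower bound on $|\nabla_X f|_g$ outside of compact subsets of $X$. Together with the fact that $|\nabla_X f|_g$ is bounded below on any compact subset of the smooth locus of $f$, this gives an $L^{\infty}$ bound on $\tilde{v}$ over preimages of compact subsets of the punctured disk, which suffices to integrate the horizontal flow for $\epsilon$ sufficiently small.

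For (ii), I would decompose parallel transport into its angular and radial components in polar coordinates $z = re^{i\theta}$. Angular parallel transport is generated by the Hamiltonian vector field of (a local branch of) $\mathrm{Im}(\log f)$ on $f^{-1}(\CC^{\ast})$, which is manifestly an exact symplectomorphism. For the radial part, the horizontal lift of $\partial_r$ is proportional to the K\"ahler gradient of $\log|f|$, and a direct computation using the relation $\lambda = -d^c\psi$ together with the holomorphicity of $f$ shows that $\mathcal{L}_{\tilde{v}_{\mathrm{rad}}}\lambda$ restricts to an exact $1$-form on each fiber. The main obstacle in this strategy is step (i): preventing escape of horizontal trajectories to infinity in the noncompact Stein setting is the essential issue, and the construction of $\psi$ via the Malgrange condition was arranged precisely to enable this estimate; step (ii) is then a relatively routine exercise in K\"ahler geometry.
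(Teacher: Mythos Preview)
Your step (ii) is fine and roughly parallels the paper's treatment, which simply cites a standard calculation from Seidel's book. The real issue is in step (i).

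You claim that the Malgrange estimate $|f|^2 < C|z|\,|\nabla_X |f|^2|$ ``after unwinding yields a uniform lower bound on $|\nabla_X f|_g$ outside of compact subsets of $X$'', and hence an $L^\infty$ bound on the horizontal lift $\tilde{v}$ over $f^{-1}(K)$ for $K$ compact in the punctured disk. This is not what the Malgrange condition gives. Unwinding it yields only
\[
|\nabla_X f| \;\gtrsim\; \frac{|f|}{|z|},
\]
so that $|\tilde v|\lesssim |z|/|f|$. Over a compact $K$ in the base this is $\lesssim |z|$, which is unbounded as $|z|\to\infty$ in $X$; there is no $L^\infty$ bound, and a linearly-growing vector field on a complete manifold can certainly send trajectories to infinity in finite time. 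Changing from the metric of $\phi$ to that of $\psi=\phi+D\phi|f|^{2m}$ does not help, since on the slab $0<|f|^2<\delta$ the two metrics are uniformly comparable.

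The paper's argument does not attempt to bound $|\tilde v|$. Instead it estimates the derivative of the exhausting function $\phi$ along a flow line $\gamma(t)$: using $|\nabla_X\phi|\le 2\phi^{1/2}$ together with the Malgrange bound $|f|<C\phi^{1/2}|d_Xf|$, one gets
\[
\frac{d}{dt}\phi(\gamma(t)) \;\le\; \frac{|\nabla_X\phi|}{|\nabla_X\operatorname{Re}(f)|} \;\le\; \frac{C\,\phi(\gamma(t))}{|f(\gamma(t))|},
\]
and since $|f(\gamma(t))|$ is bounded below along any path in the punctured disk, Gr\"onwall's inequality gives $\phi(\gamma(t))\le\phi(\gamma(0))e^{C\beta t}$. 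The point is precisely that the linear growth of $|\tilde v|$ in $|z|\sim\phi^{1/2}$ is exactly cancelled by the linear growth of $|\nabla\phi|$, so the growth rate of $\phi$ is at most linear in $\phi$ itself. Your outline is missing this Gr\"onwall step, and the substitute you propose (a direct $L^\infty$ bound) does not follow from the available estimates.
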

\begin{proof}
We follow the argument of \cite[\S 2]{FSS}. By a simple calculation (cf. \cite[p.213]{SeidelBook}) it follows that the symplectic parallel transport gives exact symplectomorphisms whenever it is defined. The parallel transport vector field is always a unit complex multiple of
\begin{equation*}
    \frac{\grad_X \re(f)}{|\grad_X \re(f)|^2}
\end{equation*}
since $\nabla_X \mathrm{Im}(f) = J \nabla_X \mathrm{Re}(f)$. Because $\phi:X \to \RR$ is a proper exhausting function, it suffices to show that the image under $\phi$ of the flow lines $\gamma(t)$ of the parallel transport vector field do not escape to infinity in finite time. Therefore consider the derivative
\begin{equation*}
    \left| \frac{\grad_X \Re(f)}{|\grad_X \Re(f)|^2} \phi \right| =  \frac{\left| \langle \grad_X \Re(f), \grad_X \phi \rangle \right|}{|\grad_X \Re(f)|^2} \leq \frac{|\grad_X \phi|}{|\grad_X \Re(f)|}
.\end{equation*}
Since there are only finitely many points in $\CC$ where the Malgrange condition fails for the \textit{complex} polynomial $f:X \to \CC$, we have a constant $C>0$ and some $\epsilon>0$ so that 
\begin{equation*}
    |f|< C |\phi|^{1/2} |\dd_X f|
\end{equation*}
for $0<|f|<\epsilon$, where $|\dd_X f| = |\grad_X \Re(f)|$. Observe also that on $\CC^N$ we have
\begin{equation*}
    |\grad \phi| = |\grad |z|^2| = 2 |\phi|^{1/2}
\end{equation*}
and hence
\begin{equation*}
    |\grad_X \phi| \leq 2 |\phi|^{1/2}
.\end{equation*}
Therefore we have that on $0<|f|<\epsilon$, for some constant $C$,
\begin{equation*}
    \phi\dash(t) \leq \frac{C\phi(t)}{|f(t)|}
\end{equation*}
where $\phi(t) = \phi(\gamma(t))$. If $\gamma(t)$ avoids the singular fiber of $f$, there is some constant $\beta>0$ so that
\begin{equation*}
    \frac{1}{|f(t)|} < \beta
.\end{equation*}
Hence by Gr\"onwall's inequality
\begin{equation*}
    \phi(t) \leq \phi(0)\e{C \beta t}
\end{equation*}
which completes the proof.
\end{proof}

\begin{remark}
By shrinking $\delta>0$ further in Proposition \ref{prop:outwards}, we will assume that $\epsilon$ from Lemma \ref{lem:transport} has $\delta<\epsilon$.
\end{remark}

Often we shall want to have the freedom to use a different Stein function to compute Fukaya categories. We now record here a standard lemma we shall use throughout, which however only appears implicitly in the literature.

\begin{definition}\label{def:simple}
Suppose $X$ is a Liouville manifold with Liouville form $\lambda$; a smooth family $\lambda_t$, $t \in [0,1]$ of Liouville forms for $X$ with $\lambda_0=\lambda$ such that the  union of the skeleta of the Liouville structures $\lambda_t$ stays within a compact set, is called a \textbf{simple Liouville homotopy}.
\end{definition}

For instance, this condition is satisfied by a family of Weinstein functions whose critical points remain in a compact set.

\begin{lemma}\label{lem:deformation}
Given a Liouville manifold $X$ and a simple Liouville homotopy $\lambda_t$, all of the Liouville manifolds $(X, \lambda_t)$ are exact symplectomorphic and the wrapped Fukaya categories $\scr{W}(X,\lambda_t)$ are all quasiequivalent.
\end{lemma}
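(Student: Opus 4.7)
The plan is a parameterized Moser argument producing a family of exact symplectomorphisms, followed by invoking invariance of the wrapped Fukaya category under exact symplectomorphisms.

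Write $\omega_t = d\lambda_t$ and seek a smooth family of diffeomorphisms $\phi_t\colon X \to X$ with $\phi_0 = \mathrm{id}$ satisfying $\phi_t^*\lambda_t = \lambda_0 + dh_t$ for some smooth functions $h_t$. Denoting by $X_t$ the time-dependent vector field generating the $\phi_t$, differentiating this relation and applying Cartan's formula reduces the problem to finding $X_t$ together with auxiliary functions $g_t$ solving $\iota_{X_t}\omega_t + \dot\lambda_t = dg_t$. Since each $\omega_t$ is nondegenerate, $X_t$ is determined uniquely once $g_t$ is chosen; the whole question is whether $g_t$ can be picked so that the flow of $X_t$ exists for all $t \in [0,1]$.

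This is where the simple-Liouville-homotopy hypothesis enters. Fix a compact set $K \subset X$ containing every skeleton and enlarge it to a compact Liouville subdomain $W$ for $\lambda_0$ with smooth boundary transverse to every Liouville vector field $Z_t$; this transversality can be arranged throughout the homotopy because $K$ is compact and the $Z_t$ vary smoothly. On $X \setminus K$ each $Z_t$ is escaping, so $Z_t$ identifies a neighbourhood of the end with a half-symplectization of $(\partial W, \alpha_t)$ for a smooth family of contact forms $\alpha_t$ on $\partial W$. A preliminary diffeomorphism isotopy, smooth in $t$ and supported near infinity, thus brings all the $\lambda_t$ into agreement with $\lambda_0$ outside $W$, up to an exact $1$-form. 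After this normalization $\dot\lambda_t$ is exact on $X \setminus W$, and I can choose $g_t$ so that $X_t$ vanishes there; $X_t$ is then compactly supported, hence complete, and its flow produces the desired $\phi_t$.

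Finally, $\phi_1$ is by construction an exact symplectomorphism $(X, \lambda_0) \to (X, \lambda_1)$, and the asserted quasiequivalence of wrapped Fukaya categories is a standard consequence of the functoriality of $\scr{W}$ under exact symplectomorphisms in the Liouville-sector framework of Ganatra--Pardon--Shende. The main obstacle I anticipate is the preliminary cylindrical normalization step: arranging the identifications of the ends $t$-smoothly is delicate, and relies essentially on the hypothesis that no skeletal point enters or leaves $W$ during the homotopy, which is precisely what the ``simple'' condition encodes.
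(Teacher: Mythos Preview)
Your proposal is correct and follows essentially the same route as the paper: the paper simply cites \cite[Proposition 11.8]{CE} for the existence of the family $\phi_t$ with $\phi_t^*\lambda_t = \lambda_0 + dh_t$ and $h_t$ \emph{compactly supported}, whereas you have sketched the Moser-type argument that underlies that proposition. The one point worth sharpening is your final sentence: it is not quite that $\scr{W}$ is invariant under arbitrary exact symplectomorphisms, but rather that the compact support of $h_t$ (which your construction does yield, since $X_t$ is compactly supported and $\phi_0 = \mathrm{id}$) makes $\phi_1$ a \emph{trivial inclusion of Liouville sectors}, and it is this specific structure that \cite[Lemma 2.6]{GPS2} turns into a quasiequivalence.
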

\begin{proof}
By \cite[Proposition 11.8]{CE} for every simple Liouville homotopy there is a family of exact symplectomorphisms $\phi_t:X \to X$ with $\phi_{t}^{\ast}\lambda_t = \lambda - \dd f$ with $f$ \textit{compactly supported}, such that $\phi_0$ is the identity. This gives rise to a trivial inclusion of Liouville sectors and so by \cite[Lemma 2.6]{GPS2} this deformation yields a quasiequivalence of wrapped Fukaya categories between $\scr{W}(X,\lambda_0)$ and $\scr{W}(X,\lambda_1)$.
\end{proof}

We are hence free to work with deformed Liouville structures in our proofs when considering only the Fukaya category up to quasi-equivalence (by \cite[Corollary 1.14]{SeidelBook}). There is a similar version of Lemma \ref{lem:deformation} also in the stopped case. A particularly useful application of this is as follows.

\begin{proposition}\label{prop:product}
Suppose $U \subset \CC$ is a simply-connected open set inside $\set{0<|z|<\delta}$; then there is a simple Liouville deformation on $X$ (supported in a neighbourhood of $f\inv(U)$) that makes $f\inv(U)$ exact symplectomorphic to the product $f\inv(\delta) \times U$ with the product 1-form, preserving the 1-forms of the fibers.
\end{proposition}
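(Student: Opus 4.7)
The plan is to combine symplectic parallel transport, which gives a smooth trivialization, with an explicit Liouville interpolation to convert the pulled-back 1-form into the product one.

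First, by Lemma \ref{lem:transport}, symplectic parallel transport along paths in $\set{0<|z|\leq\delta}$ gives exact symplectomorphisms between smooth fibers of $f$. Since $U$ is simply connected and lies in this punctured disk, I would choose a base point $u_0 \in U$, a smoothly varying family of paths in $U$ from each $u$ to $u_0$, and a fixed path from $u_0$ out to $\delta$; composing the associated parallel transports produces a diffeomorphism $\Phi\colon f\inv(\delta)\times U \to f\inv(U)$ with $f\circ\Phi=\mathrm{pr}_U$ whose restriction to each slice $f\inv(\delta)\times\set{u}$ is an exact symplectomorphism onto $f\inv(u)$.

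Next, I would analyze the pulled-back 1-form $\Phi^*\lambda$. After a fiber-preserving Hamiltonian correction (to absorb the exact discrepancy between the fiber restrictions of $\Phi^*\lambda$ and $\lambda_{\mathrm{fib}}$), one can arrange $\Phi^*\lambda = \lambda_{\mathrm{fib}} + \eta$, where $\eta$ vanishes on fibers and hence takes the form $A\,dx + B\,dy$ for functions $A,B$ on the product (with $x+iy$ a coordinate on $U$). Taking a primitive $\lambda_U = \tfrac{1}{2}(x\,dy - y\,dx)$ of a small area form on $U$, form the linear interpolation $\lambda_t = (1-t)\Phi^*\lambda + t(\lambda_{\mathrm{fib}}+\lambda_U)$. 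Its restriction to each fiber is $\lambda_{\mathrm{fib}}$ for all $t$, so the vertical part of $d\lambda_t$ is unchanged; the horizontal part is a convex combination of two area forms on the $2$-disk $U$, which remains positive throughout provided $\lambda_U$ is scaled appropriately. Each $\lambda_t$ is then a Liouville form, and at $t=1$ we obtain the desired product structure.

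Finally, to upgrade this into a simple Liouville deformation on $X$, I would extend the difference $\lambda_t-\Phi^*\lambda$ via a smooth cutoff supported in a neighborhood of $f\inv(U)$ and equal to $1$ on $f\inv(U)$, then appeal to \cite[Proposition 11.8]{CE} to produce the required exact symplectomorphism. The hard part will be verifying non-degeneracy of $d\lambda_t$ across the cutoff region and showing that the skeleta remain in a compact set. These should be tractable because the deformation is supported in the open Liouville subdomain $f\inv(\set{0<|z|^2<\delta})$, which lies away from the Weinstein skeleton; the $2$-dimensionality of $U$ is essential for the horizontal interpolation to be automatic, so a genuinely different idea would be required to generalize the argument to higher-dimensional bases.
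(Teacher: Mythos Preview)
Your proposal is correct and follows essentially the same route as the paper's argument (which in turn follows \cite[Lemma 15.3]{SeidelBook}): trivialize via parallel transport, write the pulled-back one-form as $\lambda_F + \kappa$ with $\kappa$ vanishing on fibers, and linearly interpolate to $\lambda_F + c\,\pi^*\lambda_U$ with $c$ large, cutting off outside a neighbourhood of $f^{-1}(U)$. One small caution: the horizontal part of $d\Phi^*\lambda$ need not itself be a positive area form on $U$, so the interpolation is not literally a convex combination of two area forms---but your parenthetical ``provided $\lambda_U$ is scaled appropriately'' is exactly the fix, and matches the paper's ``$c>0$ sufficiently large''.
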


The proof corresponds to the discussion around \cite[Lemma 15.3]{SeidelBook}. Summarized, by Lemma \ref{lem:transport} we can trivialize $f$ over $U$ using symplectic parallel transport, so that smoothly \begin{equation*}
    f\inv(U) \cong f\inv(\delta) \times U
.\end{equation*}
Since the symplectic parallel transport induces exact symplectomorphisms of the fibers, up to an exact form, the symplectic form pulled back to $f\inv(\delta) \times U$ is given by
\begin{equation*}
   \lambda =  \lambda_{F} + \kappa
\end{equation*}
where $\lambda_F = \lambda|_{f\inv(\delta)}$ and $\kappa$ is the symplectic connection form. There is then a deformation
\begin{equation*}
    \lambda_s = \lambda_F + s \kappa + c(1-s) \pi^{\ast} \lambda_U
\end{equation*}
for $s \in [0,1]$ and $c>0$ sufficiently large. 
As in \cite[Lemma 15.3]{SeidelBook} this can be supported in a neighbourhood of $f\inv(U)$ and is a simple deformation since it preserves the fibers.

\subsection{Cap and Cup Functors}\label{sec:cup}

Now we wish to define some functors introduced in \cite{AS} in the language of \cite{GPS2}. Firstly, if $F \subset \partial^{\infty}X$ is a Liouville hypersurface, then there is the \textit{Orlov functor}
\begin{equation*}
    \scr{W}(F) \to \scr{W}(X,F)
.\end{equation*}
given by taking small counterclockwise linking disks of the stop $F$ \cite{GPS2, Sylvan2}. Henceforth we shall assume that $F$ is the fiber of a Landau-Ginzburg model $f:X \to \CC$ as explained above in \S \ref{sec:defns}. In this case $F = f\inv(-\delta)$ we call the functor $\scr{W}(F) \to \scr{W}(X,F)$ the \textit{cup functor} $\cup$: see Figure \ref{fig:cup}. 

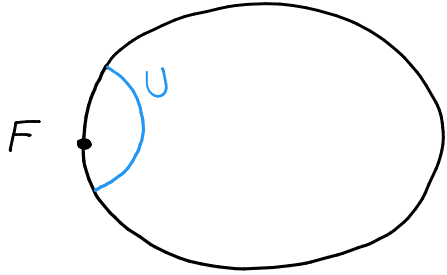
\begin{figure}
\centering
\begin{tikzpicture}[scale=1]
\filldraw[fill=gray!10] (0,0) circle (2);
\draw[fill=black] (-2,0) circle (0.05);
\draw[thick,blue] (-150:2) arc[start angle=-90, end angle=90,radius=1];
\node[anchor=north west] at (-1,1) {$\cup$};
\node[anchor=north east] at (-2,0) {$F$};
\end{tikzpicture}
    \caption{The cup functor.\label{fig:cup}}
\end{figure}

The cup functor has a formal adjoint given by the pullback on left Yoneda modules $\mathrm{Mod}\scr{W}(X,F)\to \mathrm{Mod}\scr{W}(F)$, which we call the \textit{cap functor} $\cap$. We then have counit and unit morphisms $\epsilon: \cup \cap \to \mathrm{id}$ and $\eta: \mathrm{id} \to \cap \cup$ respectively, which we may complete to exact triangles of bimodules. These exact triangles in fact have a geometric characterization in terms of earlier work of Seidel:

\begin{theorem*}(Abouzaid-Ganatra, \cite{AG})
There are exact triangles of bimodules
\begin{center}
\begin{tikzcd}
\mathrm{\cup \cap} \ar{rr}{\epsilon} & {} & \mathrm{id} \ar{dl}\\
{} & \sigma \ar{ul}{+1} & {}
\end{tikzcd}
\end{center}
on $D^{\pi} \scr{W}(X,F)$ and
\begin{center}
\begin{tikzcd}
\cap \cup \ar[swap]{dr}{+1} & {} & \mathrm{id} \ar[swap]{ll}{\eta}\\
{} & \mu \ar{ur}{s} & {}
\end{tikzcd}
\end{center}
on $D^{\pi}\scr{W}(F)$, where $\mu: \scr{W}(F) \to \scr{W}(F)$ is the clockwise monodromy acting on the fiber and $\sigma: \scr{W}(X,F) \to \scr{W}(X,F)$ is the clockwise total twist acting on Lagrangians in the total space. Moreover, the natural transformation $s$ may be identified with Seidel's natural transformation, first introduced in \cite{SHasHH} (see also \cite{Subalgebras, LFII}).
\end{theorem*}

Conventions for these two triangles differ: see \cite[Theorem 1.3]{Sylvan2} for the identifications of the twist/cotwist with the monodromy functors and \cite[Appendix]{abouzaidSmith} for a proof of one triangle. Our conventions are chosen to be compatible with the counterclockwise wrapping of \cite{GPS} so that there is a degree-$0$ natural transformation $\mathrm{id} \to \sigma$: this forces $\cup, \sigma, \mu$ to be clockwise and $\cup$ the left adjoint. However, this means that in the case of a model Lefschetz fibration, $\mu$ is the negative Dehn twist. 

\textit{None of these results are logically necessary for the proof of Theorem \ref{thm:main_thm} of this paper}, and for the proofs following the reader may take $\mu, \sigma, s$ to be defined via the purely algebraic definition given above. Then we may make the definition:

\auroux*

The condition that $|t| \neq 0$ be sufficiently small is made to exclude the possibility that $t$ is a point where the Malgrange condition fails; in our definitions earlier in \S \ref{sec:defns} we define our Landau-Ginzburg models so as to exclude such points. 

Note that this definition is equivalent to taking the quotient of the category $\scr{W}(f\inv(t))$ by the full subcategory of the cones of the natural transformation, that is, the quotient by the essential image of the composition $\cap \cup$. The following lemma is often useful for computations:

\begin{lemma}\label{lemma:split}
The essential image of the composition $\cap \cup$ has the same split-closure as the essential image of $\cap$.
\end{lemma}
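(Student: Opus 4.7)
The plan is to establish equality of the two split-closures by checking containment in both directions. One direction is immediate: every object in the essential image of $\cap \cup$ is of the form $\cap M$ with $M = \cup L$, so the essential image of $\cap \cup$ is contained in the essential image of $\cap$, and this containment is inherited by split-closures.

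For the reverse inclusion, given any $M \in \scr{W}(X,F)$, I would realize $\cap M$ as a homotopy retract of an object in the essential image of $\cap \cup$. The key is a triangle identity for the adjunction $\cup \dashv \cap$: for the unit $\eta: \mathrm{id} \to \cap \cup$ and counit $\epsilon: \cup \cap \to \mathrm{id}$, the composition of natural transformations
\[
\cap \xrightarrow{\eta \cap} \cap \cup \cap \xrightarrow{\cap \epsilon} \cap
\]
is homotopic to $\mathrm{id}_{\cap}$. Applying this at a given $M \in \scr{W}(X,F)$ exhibits $\cap M$ as a homotopy retract of $\cap \cup (\cap M)$, which manifestly lies in the essential image of $\cap \cup$ (take $L = \cap M$). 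Since the idempotent completion $D^{\pi} \scr{W}(F)$ splits all homotopy retracts, $\cap M$ lies in the split-closure of the essential image of $\cap \cup$, which gives the reverse containment.

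The main point to be careful about is the $A_\infty$-categorical status of the triangle identity for the formal adjoint $\cap$ of \cite{GPS2}: one needs that the unit and counit furnished by the formal-adjoint construction come with the coherent higher homotopies making $\cup \dashv \cap$ a genuine $A_\infty$-adjunction, so that the triangle identities hold at least up to homotopy. This is standard for formal adjoints between $A_\infty$-categories, and the resulting cohomology-level retraction is exactly what the idempotent completion needs to produce a summand. A secondary technical point is that $\cap$ is a priori valued in modules rather than in $D^{\pi}\scr{W}(F)$, but this poses no issue for the present lemma since the statement concerns essential images viewed inside the common ambient module category, in which the retraction argument above is carried out verbatim.
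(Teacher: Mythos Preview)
Your proof is correct and uses essentially the same idea as the paper: both rely on the unit--counit triangle identity $\cap\epsilon \circ \eta\cap \simeq \mathrm{id}_{\cap}$ to exhibit $\cap M$ as a retract of $\cap\cup\cap M$. The paper routes this through the two exact triangles (postcomposed and precomposed with $\cap$, using $\mu\cap = \cap\sigma$) to deduce that the map $\mu\cap \to \cap$ vanishes and the triangle splits, but your direct retraction argument is the same content presented more economically.
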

\begin{proof}
One inclusion is clear. To show the reverse inclusion, consider the two exact triangles relating the composition of the $\cap$ and $\cup$ functors; importantly, note that these two triangles have arrows in reverse directions with the conventions taken above, where $\epsilon$ and $\eta$ are the counit and unit of the $\cup-\cap$ adjunction respectively. We shall apply $\cap$ to the first triangle and precompose the second triangle with $\cap$. Exactness and the identity $\mu \cap = \cap \sigma$ \cite{AG} yields two triangles: 

\begin{center}
\begin{tikzcd}
\cap \cup \cap \ar{rr}{\cap \epsilon} & {} & \cap \ar[swap]{dl}\\
{} & \mu \cap \ar{ul}{+1} & {}
\end{tikzcd}
\end{center}

\begin{center}
\begin{tikzcd}
\cap \cup \cap \ar[swap]{dr}{+1} & {} & \cap \ar[swap]{ll}{\eta \cap}\\
{} & \mu \cap \ar{ur} & {}
\end{tikzcd}
\end{center}

The unit-counit identity $\cap \epsilon \circ \eta \cap = \mathrm{id}_{\cap}$ for an adjunction implies that the map $\mu \cap \to \cap$ in the second triangle is zero and hence that this triangle splits. Thus $\cap \cup \cap L$ always has $\cap L$ as a direct summand.
\end{proof}

\begin{corollary}\label{cor:alt_defn}
The category $D^{\pi}\scr{W}(f\inv(0))$ is quasiequivalent to the quotient of the category $D^{\pi}\scr{W}(f\inv(t))$ by the essential image of $\cap$.
\end{corollary}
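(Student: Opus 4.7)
The plan is to combine Definition \ref{defn:auroux} with Lemma \ref{lemma:split}. First, by the second exact triangle of the Abouzaid--Ganatra theorem, for every object $L \in \scr{W}(f\inv(t))$ the cone of the component $s_L: \mu L \to L$ of Seidel's natural transformation is (up to shift) the object $\cap \cup L$. By \cite{LO}, the $A_\infty$-localization at a natural transformation is computed as the Drinfeld quotient by the full subcategory generated by the cones of its components. Thus
\begin{equation*}
    D\scr{W}(f\inv(t))[s\inv] \;\simeq\; D\scr{W}(f\inv(t))\,/\,\essim(\cap \cup),
\end{equation*}
and passing to idempotent completions using Definition \ref{defn:auroux} gives
\begin{equation*}
    D^{\pi}\scr{W}(f\inv(0)) \;\simeq\; D^{\pi}\!\left(D\scr{W}(f\inv(t))/\essim(\cap \cup)\right).
\end{equation*}

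Next I would observe that for any pretriangulated $A_\infty$-category $\scr{C}$ and any full subcategory $\scr{A} \subset \scr{C}$, the idempotent completion of $\scr{C}/\scr{A}$ is quasi-equivalent to the idempotent completion of $\scr{C}/\langle \scr{A}\rangle^{\mathrm{split}}$, where $\langle \scr{A}\rangle^{\mathrm{split}}$ is the split-closure of $\scr{A}$ inside $D^\pi \scr{C}$: any object that becomes a direct summand of a zero object in the quotient is itself zero in $D^\pi$ of the quotient. Applying this with $\scr{A} = \essim(\cap \cup)$ and invoking Lemma \ref{lemma:split}, which asserts that $\essim(\cap\cup)$ and $\essim(\cap)$ have the same split-closure in $D^{\pi}\scr{W}(f\inv(t))$, I may replace $\essim(\cap \cup)$ by $\essim(\cap)$ in the quotient at the cost of taking idempotent completion. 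This yields
\begin{equation*}
    D^{\pi}\scr{W}(f\inv(0)) \;\simeq\; D^{\pi}\!\left(D\scr{W}(f\inv(t))/\essim(\cap)\right),
\end{equation*}
which is the claim.

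The only real subtlety—and the step I expect to be the main obstacle to making fully rigorous—is the interchange of quotient and idempotent completion together with the identification of $A_\infty$-localization at $s$ with a Drinfeld quotient by cones; both are standard but must be invoked at the level of dg/$A_\infty$-enhancements as in \cite{LO}, rather than just on the triangulated level. Once these formal manipulations are granted, the substantive content is entirely captured by Lemma \ref{lemma:split}, which is what allows $\cap \cup$ to be replaced by $\cap$.
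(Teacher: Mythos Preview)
Your proposal is correct and follows essentially the same route as the paper: identify the localization at $s$ with the quotient by the essential image of $\cap\cup$ via the Abouzaid--Ganatra triangle, then invoke Lemma~\ref{lemma:split} to replace $\cap\cup$ by $\cap$ after idempotent completion. The paper's proof is a two-line version of exactly this argument, so your more detailed unpacking of the localization-as-quotient step and the split-closure comparison is appropriate elaboration rather than a different method.
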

\begin{proof}
By Lemma \ref{lemma:split}, the essential images of the two functors $\cap \cup$ and $\cap$ have the same split-closure. Thus the resulting quotients will be quasiequivalent.
\end{proof}

\begin{remark}
To see that it is necessary to take the split-closure of the twisted complexes, one can consider the instance where the monodromy around the singular fiber is given by the composition of two Dehn twists around the same vanishing cycle. 
\end{remark}

The reader wishing to avoid \cite{AG,Sylvan2} may instead take Corollary \ref{cor:alt_defn} as a definition for the purposes of the proof of Theorem \ref{thm:main_thm}.

\label{sec:cobs} Definition \ref{defn:auroux} also has a particularly natural relation to Lagrangian cobordisms inside symplectic fibrations (cf. \cite{Cob}). In place of studying Lagrangian cobordisms inside the product $f\inv(t)\times \CC$, we could alternatively consider Lagrangian cobordisms inside $X$ (or several concatenated copies of $X$) with ends projecting via $f$ to rays parallel to the positive or negative real axes: see Figure \ref{fig:cob}. After applying a Hamiltonian isotopy, every such cobordism may instead be considered as having only positive ends; these nullcobordisms thus represent all the equivalence relations imposed on the group of Lagrangian cobordisms. But these relations say that every complex in the image of $\cap$ must be equivalent to zero, which by Corollary \ref{cor:alt_defn} is an equivalent description of the Fukaya category $\scr{W}(f\inv(0))$. We therefore conjecture that this construction describes the Grothendieck group $K_0(D^{\pi}\scr{W}(f\inv(0)))$.

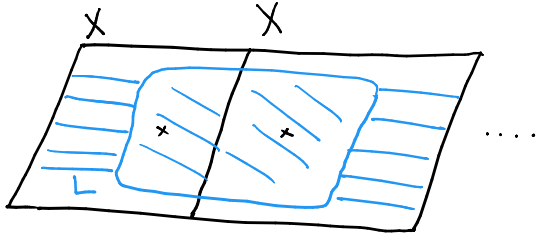
\begin{figure}
\centering
\begin{tikzpicture}[scale=0.5]
\path[draw,fill=blue!20,use Hobby shortcut,closed=true] (-4,-4) .. (3,-3) .. (4.25,0) ..(3,3) .. (3,4) .. (0,5.5) .. (-4,3) .. (-3.75,1) .. (-3.75,-1) .. (-4,-4);
\draw[thick,blue] (4.25,0) to (6,0); 
\draw[thick,blue] (3,3) to (6,3);
\draw[thick,blue] (3,-3) to (6,-3);
\draw[thick,blue] (-3.75,-1) to (-6,-1); 
\draw[thick,blue] (-3.75,1) to (-6,1); 
\draw[thick] (-6,-6) to (6,-6) to (6,6) to (-6,6) to (-6,-6);
\draw[thick,dashed] (0,-6) to (0,6);
\draw (-2.5,0) node {$\huge{\times}$};
\draw (2.5,0) node {$\huge{\times}$};
\draw (-2,1) node {$\huge{\times}$};
\draw (3,1) node {$\huge{\times}$};
\draw (-2,-1) node {$\huge{\times}$};
\draw (3,-1) node {$\huge{\times}$};
\draw (5,-5) node {$X$};
\draw (-5,-5) node {$X$};
\draw[blue] (-4,4.5) node {$L$};
\end{tikzpicture}
    \caption{A Lagrangian cobordism $L$ inside a concatenation of copies of $X$. \label{fig:cob}}
\end{figure}

\section{Complements of Fibers}\label{sec:stein}

We begin with some simple observations about the geometry of the Landau-Ginzburg models $(X \times \CC, z(f-t))$ for different values of $t$. For the purposes of this section, let $f_t(z) = f(z) - t$.

Firstly, observe that $z(f-t)$ has only a single critical fiber, occurring where $z(f-t) = 0$. This fiber is given set-theoretically by $(f\inv(t) \times \CC) \cup (X \times \set{0})$; the critical locus is given exactly by $f\inv(t)$. Observe that for $t \neq 0$, this critical locus is therefore smooth, and we have an explicit local Morse-Bott model, described in further detail below.

For $\epsilon \neq 0$, the smooth fiber of $z(f-t)$ over $\epsilon$ is given by $\set{(x,z): f(x) \neq t\; \text{and} \; z = \epsilon/f_t}$, which can be identified set-theoretically with the complement $X\setminus f\inv(t)$. The corresponding Liouville structure on this complement comes from the restriction of the Stein function to the fiber. Using the Stein function $|z|^{2n}$ on the factor $\CC$ this is hence given by
\begin{equation*}
    \psi(x) = \phi(x) + \frac{\epsilon^{2n}}{|f_t(x)|^{2n}}
.\end{equation*}
This Stein function may be obtained by taking the embedding of $X \setminus f\inv(t)$ into $\CC^{N+1}$ given by $j(z) = \br{i(z), \frac{\epsilon}{f_t(z)}}$. We shall instead use a deformation-equivalent Stein structure on the complement, given by
\begin{equation*}
    \psi(x) = \phi(x) + \frac{C \phi(x)}{|f_t(x)|^{2n}}
\end{equation*}
for suitable constants $C>0$ and $n \in \NN$. This Stein function is given by the embedding of $X\setminus f\inv(t)$ into $\CC^{2N}$ given by $z \mapsto (i(z), \sqrt{C}i(z)/f_t(z)^n)$ and is Stein deformation-equivalent to the previous under the deformation for $s \in [0,1]$ given by
\begin{equation*}
    \psi_s(x) = \phi(x) + \frac{s + \phi(x)}{1 + s \phi(x)}\frac{C}{|f_t(x)|^{2n}}
.\end{equation*}

To understand how the LG models of $(X \times \CC, z(f-t))$ for $t \neq 0$ and $t=0$ differ, we need to study how the Liouville structure of the general fiber changes. The following elementary example illustrates the procedure.\\

\begin{example}\label{example}
We recall how to build standard Weinstein structures on Lefschetz fibrations. Consider $f:\CC^{2} \to \CC$ the standard Lefschetz fibration, and equip $\CC^2$ with the standard Stein structure. Consider the skeleton of the complement of the fiber $f\inv(t)$. For $t=0$, we see that $\CC^2 \setminus f\inv(0)  = (\CC^{\ast})^2$ with the standard Liouville structure, so its skeleton corresponds to the zero-section $T^2 \subset T^{\ast} T^2 \cong (\CC^{\ast})^2$.

Observe that the Weinstein function coming from the Stein structure on $\CC^2$ has a single critical point at $0$, of index $2$. When we take $t \neq 0$, the Weinstein function on $X \setminus f\inv(t)$  now includes this critical point. Therefore $\CC^2 \setminus f\inv(t)$ is obtained from $\CC^2 \setminus f\inv(0)$ by attaching a critical Weinstein handle to $\CC^2 \setminus f\inv(0)$. The skeleton of $\CC^2 \setminus f\inv(t)$ now includes the core of this handle, attached to the original $T^2$, as in Figure \ref{fig:example}. \QEDB
\end{example}

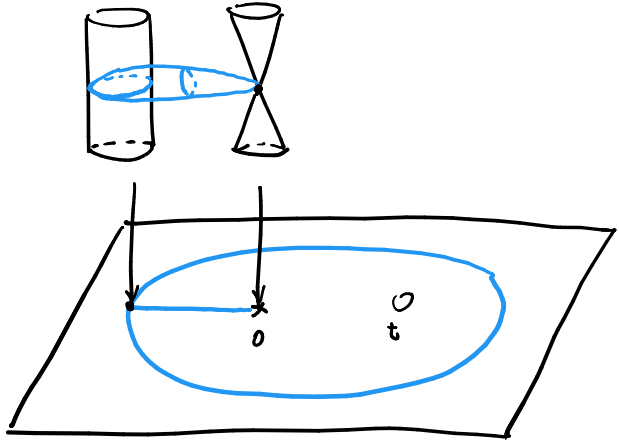
\begin{figure}
\centering
\begin{tikzpicture}[scale=0.75]
\draw[thick] (-5,-4) to (3,-4) to (5,0) to (-3,0) to (-5,-4);
\filldraw[blue,fill=blue!20,thick] (-4,2.5) to[out=70,in=150] (0,2.5) to[out=210,in=-70] (-4,2.5);
\draw (0,-2) node {$\huge{\times}$};
\draw[dashed] (0,-2) -- (0,1);
\draw[dashed] (-3,-2) -- (-3,1);
\draw[fill=white] (2,-2) circle (0.1);
\draw[fill=black] (-3,-2) circle (0.1);
\draw[blue,thick] (0,-2) ellipse (3 and 1.5);
\draw[blue,thick] (-3,-2) to (0,-2);
\draw (-3,4) ellipse (1 and 0.5);
\draw (-3,1) ellipse (1 and 0.5);
\draw[blue] (-3,2.5) ellipse (1 and 0.5);
\draw (-4,4) to (-4,1);
\draw (-2,4) to (-2,1);
\draw (0,4) ellipse (1 and 0.5);
\draw (0,1) ellipse (1 and 0.5);
\draw (-1,4) to (1,1);
\draw (1,4) to (-1,1);
\draw[blue,dashed] (-1.5,2.5) ellipse (0.25 and 0.65);
\draw (0,-2) node[anchor=north east] {$0$};
\draw (2,-2) node[anchor=north west] {$t$};
\end{tikzpicture}
    \caption{Skeleton in Example \ref{example} \label{fig:example}(in blue).}
\end{figure}

To describe the general case we shall need a dual version of Proposition \ref{prop:outwards}.

\begin{restatable}{proposition}{inward}\label{prop:inward} For sufficiently large $C>0$ and $n \in \NN$, and $t$ sufficiently small the Liouville vector field of $\psi$ is inward-pointing on $\set{|f_t|^2 = \delta}$ for all $\delta>0$ sufficiently small.
\end{restatable}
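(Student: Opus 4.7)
The plan is to mirror the computation of Proposition \ref{prop:outwards}, now showing that the inner product $\langle \grad_X |f_t|^2, \grad_X \psi\rangle$ is \emph{negative} on the level set $\{|f_t|^2 = \delta\}$; since $\grad_X |f_t|^2$ is a nonzero outward normal to this level set near the smooth hypersurface $f^{-1}(t) = f_{t}^{-1}(0)$, negativity is precisely the statement that the Liouville vector field points inward toward the deleted fiber. Differentiating $\psi = \phi + C\phi/|f_t|^{2n}$ directly yields
\begin{equation*}
\langle \grad_X |f_t|^2, \grad_X \psi\rangle = \bigl(1 + \tfrac{C}{|f_t|^{2n}}\bigr)\langle \grad_X |f_t|^2, \grad_X \phi\rangle - \tfrac{nC\phi}{|f_t|^{2n+2}}|\grad_X |f_t|^2|^{2},
\end{equation*}
so after bounding the first term via Cauchy--Schwarz together with $|\grad_X\phi|\le 2\phi^{1/2}$ exactly as in Proposition \ref{prop:outwards}, and dividing through by $|\grad_X |f_t|^2|\cdot \phi^{1/2}$, it suffices to verify the inequality
\begin{equation*}
2\bigl(\tfrac{|f_t|^{2n+2}}{C} + |f_t|^{2}\bigr) < n\,\phi^{1/2}|\grad_X |f_t|^2|.
\end{equation*}

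The second step supplies a Malgrange-type lower bound for the right-hand side. The set of real values at which the Malgrange condition fails for the polynomial $f$ is finite, and for $t$ sufficiently small the corresponding failure set for $f_t = f - t$ stays discrete and bounded away from $0$. Applying the argument that produced $\delta$ in \S\ref{sec:defns} to $|f_t|^2$ then furnishes a constant $C'>0$, uniform in small $t$, with $|f_t|^{2} < C'\,\phi^{1/2}|\grad_X |f_t|^2|$ on $\{|f_t|^2 = \delta\}$ for all sufficiently small $\delta$. Substituting this and taking $C$ large enough that $|f_t|^{2n+2}/C = \delta^{n+1}/C < |f_t|^2$, the desired inequality reduces to $4|f_t|^2 < (n/C')|f_t|^2$, which holds whenever $n > 4C'$.

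The main technical obstacle is securing this uniformity of the Malgrange bound in $t$, together with the non-vanishing of $\grad_X|f_t|^2$ on the relevant level set. The latter is immediate from the hypothesis that $f$ has only the single critical fiber $f^{-1}(0)$: for $t$ small and nonzero, $f^{-1}(t)$ is smooth, so $\grad_X|f_t|^2$ is nonvanishing on a neighborhood of $f^{-1}(t)$ containing $\{|f_t|^2 = \delta\}$ for $\delta$ small. For the former, the Malgrange condition is open, and the discrete bad set of $|f_t|^2$ moves continuously with $t$, so one obtains estimates of the same shape as in \S\ref{sec:defns} with constants uniform on a small interval of $t$ around $0$. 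With these two points in hand, the argument is formally dual to that of Proposition \ref{prop:outwards}, the sign of the second term being reversed because $\psi$ now blows up rather than decays as $|f_t| \to 0$.
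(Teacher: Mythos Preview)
Your proof is correct and follows essentially the same route as the paper: compute $\langle \grad_X |f_t|^2, \grad_X \psi\rangle$, apply Cauchy--Schwarz together with $|\grad_X\phi|\le 2\phi^{1/2}$ to reduce to the inequality $2\bigl(\tfrac{|f_t|^{2n+2}}{C} + |f_t|^{2}\bigr) < n\,\phi^{1/2}|\grad_X |f_t|^2|$, and then invoke the Malgrange condition for $f_t$ at small $|t|$ and $\delta$ to choose $n$ large. Your additional discussion of uniformity of the Malgrange bound in $t$ and of the nonvanishing of $\grad_X|f_t|^2$ makes explicit points the paper leaves implicit, but the argument is otherwise the same.
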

\begin{proof}
We consider the inner product 
\begin{equation*}
    \langle \grad_X |f_t|^2, \grad_X \psi \rangle  = \br{1 + \frac{C}{|f_t|^{2n}}}\langle \grad_X |f_t|^2, \grad_X \phi \rangle - \frac{n C \phi}{|f_t|^{2n+2}} |\grad_X |f_t|^2|^2  
.\end{equation*}
By applying Cauchy-Schwarz, we see that we will have
\begin{equation*}
    \langle \grad_X |f_t|^2, \grad_X \psi \rangle  < 0
\end{equation*}
on $|f_t|\inv(\delta)$ whenever
\begin{equation*}
    2\br{\frac{|f_t|^{2n+2}}{C} + |f_t|^2} < n  |\phi|^{1/2} |\grad_X |f_t|^2|
.\end{equation*}
Again, by choosing $|t|, \delta>0$ sufficiently small we can ensure that the Malgrange condition is satisfied on $|f_t|^2=\delta$ for some sufficiently large $n \in \NN$ and this proposition will follow.
\end{proof}

\begin{remark}\label{remark:both}
Note that we can apply the same argument with the Stein function
\begin{equation*}
    \psi = \phi + C \phi |f_t|^{2m}
\end{equation*}
homotopic to the Stein function from Proposition \ref{prop:outwards} in place of $\phi$ so as to have both Proposition \ref{prop:outwards} and Proposition \ref{prop:inward} hold simultaneously (on concentric disks). The resulting inequality
\begin{equation*}
    C |f_t|^{2} + |f_t|^{2n+2}  + D |f_t|^{2m +2n+2} < \frac{\phi^{1/2}}{2}|\grad_X |f_t|^2|\br{ C n - D m |f_t|^{2m+2n}}
\end{equation*}
holds on $\epsilon \leq |f_t|^2 \leq \delta$ for $\delta > \epsilon>0$ sufficiently small and $C, n$ chosen sufficiently large, given $D,m$.
\end{remark}

We now apply Eliashberg's surgery theory for Weinstein manifolds to understand how the general fiber changes as we vary $t$:

\begin{proposition}\label{prop:surgery}
The Weinstein structure above on $X \setminus f\inv(t)$ is obtained from $X\setminus f\inv(0)$ by attaching a collection of Weinstein handles.
\end{proposition}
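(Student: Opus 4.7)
The plan is to invoke Eliashberg's theory of Weinstein surgery (\cite[Chapter 12]{CE}) by tracking how the critical points of the family of Stein functions $\psi_s = \phi + C\phi/|f_s|^{2n}$ on $X \setminus f^{-1}(s)$ evolve as $s$ moves from $0$ to $t$; the newly created critical points of $\psi_t$ will be identified with the Weinstein handles being attached.

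First I would argue that far from $f^{-1}(0)$ the two Weinstein structures agree. On a region uniformly bounded away from $f^{-1}(0)$, the function $|f_s|^2$ is uniformly bounded below for $s$ small, so $\psi_s$ and its critical points depend smoothly on $s$. Combining this with Proposition \ref{prop:inward} (applied in the form of Remark \ref{remark:both} to concentric shells $\epsilon \leq |f_s|^2 \leq \delta$) to control the boundary behavior produces a simple Liouville homotopy in the sense of Definition \ref{def:simple}, so by Lemma \ref{lem:deformation} the Weinstein structures agree there up to compactly supported exact symplectomorphism.

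The essential content of the proposition is the local analysis near $f^{-1}(0)$. At a smooth point of $f^{-1}(0)$, symplectic parallel transport (Lemma \ref{lem:transport}) trivializes $f$ locally as a submersion, and a fiberwise-product argument analogous to Proposition \ref{prop:product} shows that no new critical points of $\psi_s$ appear in a tubular neighborhood of this part of $f^{-1}(0)$. At each isolated critical point $p$ of $f$ lying on $f^{-1}(0)$, I would choose holomorphic coordinates centered at $p$ in which $f$ has a Morse (or higher-order) normal form, expand $\psi_s$ to leading order in both $s$ and the coordinates, and explicitly produce a finite collection of new nondegenerate critical points of $\psi_s$ born in a neighborhood of $p$ through standard birth-death bifurcations. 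By \cite[Chapter 12]{CE}, each such bifurcation corresponds to a Weinstein handle attached to $X \setminus f^{-1}(0)$ along a Legendrian sphere in its contact boundary, and summing over all critical points $p$ of $f$ on $f^{-1}(0)$ yields the claimed decomposition.

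The main obstacle is the local analysis at each $p$: one must verify directly (possibly after a small Stein-class perturbation of $\psi$ supplied by Lemma \ref{lem:deformation}) that the newly born critical points of $\psi_s$ are nondegenerate, that they arise through standard birth-death bifurcations, and that the resulting attaching Legendrians lie in the expected isotopy class. One must also ensure that no spurious critical points appear in the intermediate region, which requires careful control of the gradient of the $C\phi/|f_s|^{2n}$ term throughout the deformation.
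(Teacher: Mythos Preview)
Your approach differs substantially from the paper's, which is much simpler and sidesteps precisely the local analysis you identify as the main obstacle. The paper does not track critical points of $\psi_s$ through a homotopy at all. Instead it proceeds in three steps: (1) since the Liouville vector field of $\psi_0$ is inward-pointing along $|f|^{-1}(\delta)$ (Proposition~\ref{prop:inward}) and has no zeroes in $f^{-1}(B_\delta \setminus \{0\})$, the region $f^{-1}(B_\delta \setminus \{0\})$ is a \emph{trivial} Weinstein cobordism, giving $X \setminus f^{-1}(0) \cong X \setminus f^{-1}(B_\delta)$; (2) a simple Liouville deformation along the family $\psi_s$ identifies $X \setminus f^{-1}(B_\delta)$ with $X \setminus f^{-1}(B_\delta(t))$, since no critical points of $\psi_s$ outside $f^{-1}(B_\delta)$ enter $f^{-1}(B_\delta(s))$; (3) the inward-pointing property again makes $f^{-1}(B_\delta(t)) \setminus f^{-1}(t)$ a Weinstein cobordism from $X \setminus f^{-1}(B_\delta(t))$ to $X \setminus f^{-1}(t)$, and Eliashberg's general theory decomposes any Weinstein cobordism into handle attachments. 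No analysis of the structure of the critical locus of $f$ is required: the handles are whatever the cobordism $f^{-1}(B_\delta(t)) \setminus f^{-1}(t)$ happens to contain.

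Your plan, by contrast, commits to a pointwise local analysis at each critical point of $f$, and this is where it runs into genuine trouble. First, you assume the critical points of $f$ on $f^{-1}(0)$ are isolated, but the hypotheses only say $f^{-1}(0)$ is the unique critical fiber; the critical locus may have positive dimension, and indeed the main application of this proposition (via Proposition~\ref{prop:generation} in the proof of Theorem~\ref{thm:aak}) is to the function $z(f-t)$ on $X \times \CC$, whose critical locus is all of $f^{-1}(t)$. Second, even at an isolated critical point, $f$ need not be Morse, and verifying nondegeneracy and standard birth-death behavior for the critical points of $\phi + C\phi/|f_s|^{2n}$ near a general isolated hypersurface singularity is not routine. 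Third, the underlying manifolds $X \setminus f^{-1}(s)$ vary with $s$, so ordinary parametrized Morse theory for a fixed manifold does not directly apply. The cobordism argument avoids all three difficulties: the only analytic input beyond general Weinstein theory is the inward-pointing condition, which is exactly Proposition~\ref{prop:inward}.
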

\begin{proof}
First we shall describe how to start with $X\setminus f\inv(0)$ and pass to $X\setminus f\inv(B_{\delta})$, both equipped with the Stein function:
\begin{equation*}
    \psi_0(x) = \phi(x) + \frac{C \phi(x)}{|f(x)|^{2n}}
.\end{equation*}
By Proposition \ref{prop:inward}, for $\delta>0$ sufficiently small and $n,C$ sufficiently large,  we can take $B_{\delta}$ a ball around $0$ so that the Liouville vector field for $\psi_0$ on $X$ is inward-pointing on $|f|\inv(\delta)$. Possibly shrinking $\delta>0$ further, there are no zeroes of the Liouville vector field in $f\inv(B_{\delta}\setminus \set{0})$, since the zeroes of $\phi$ on $X$ are isolated under the map $f:X \to \CC$. Hence we have a trivial Weinstein cobordism between $X\setminus f\inv(0)$ and $X\setminus f\inv(B_{\delta})$, and hence an isomorphism of Liouville manifolds \cite{CE}. 

Taking $|t|<\delta$, we now pass from $X\setminus f\inv(B_{\delta})$ to $X \setminus f\inv(t)$. First, there is a  deformation of Weinstein structures on $X\setminus f\inv(B_{\delta}(s))$ (all diffeomorphic for $|s| < \delta$) given by:
\begin{equation*}
    \psi_s(x) = \phi(x) + \frac{C \phi(x)}{|f_s(x)|^{2n}}
\end{equation*}
for $s \in [0,t]$. For $\epsilon >0$ sufficiently small, none of the critical points of $\psi_s$ outside $f\inv(B_{\delta})$ enter $f\inv(B_{\delta}(s))$ as $s$ goes from $0$ to $t$: hence this is a simple Liouville deformation and so $\psi_0$ and $\psi_t$ give isomorphic Liouville structures on the manifolds $X\setminus f\inv(B_{\delta})$ and $X\setminus f\inv(B_{\delta}(t))$ (Lemma \ref{lem:deformation}).

Now, since the Liouville vector field for $\psi_t$ is inward-pointing along $f\inv(\partial B_{\delta})$ note that $f\inv(B_{\delta})\setminus f\inv(t)$ gives a Weinstein cobordism from $X\setminus f\inv(B_{\delta})$ to $X \setminus f\inv(t)$. Hence by Eliashberg's surgery theory for Weinstein manifolds \cite{CE}, we obtain $X \setminus f\inv(t)$ by attaching Weinstein handles along the boundary of $X \setminus f\inv(B_{\delta})$: see Figure \ref{fig:handles}.
\end{proof}

\begin{figure}
\centering
\begin{tikzpicture}[scale=0.5]
\draw[thick] (0,0) circle (4);
\draw[thick] (-2,0.2) circle (0.25);
\draw[thick] (-5,-5) to (5,-5) to (5,5) to (-5,5) to (-5,-5);
\draw (-2,0.2) node {$\huge{\times}$};
\draw[fill=black] (2,0.2) circle (0.1);
\draw (2,0.2) node[anchor=west] {$t$};
\begin{scope}[shift={(-5.5,-5.5)},scale=2]
\begin{axis}[
    xmin = -2, xmax = 2,
    ymin = -2, ymax = 2,
    zmin = 0, zmax = 2,
    axis equal image,
    yticklabels={,,},
    xticklabels={,,},
    view = {0}{90},
    axis line style={draw=none},
]
    \addplot3[
        quiver = {
            u = {(-0.8-x)/100},
            v = {(-y)/100}, scale arrows=10
        }, 
        -stealth,
        domain = -5:5,
        domain y = -5:5,
        samples=30,
    ] {0};
\end{axis}
\end{scope}
\draw (-5,-6.5) node {$X\setminus f\inv(0)$};
\end{tikzpicture}\begin{tikzpicture}[scale=0.5]
\draw[thick] (0,0) circle (4);
\draw[thick] (2.2,0.2) circle (0.2);
\draw (2.5,0.2) node[anchor=west] {$t$};
\draw[thick] (-5,-5) to (5,-5) to (5,5) to (-5,5) to (-5,-5);
\draw (-2,0.2) node {$\huge{\times}$};
\draw[thick,red] (-2,0.2) to (-4,0.2);
\draw[thick,blue] (-2,0.2) to (2,0.2);
\begin{scope}[shift={(-5.5,-5.5)},scale=2]
\begin{axis}[
    xmin = -2, xmax = 2,
    ymin = -2, ymax = 2,
    zmin = 0, zmax = 2,
    axis equal image,
    yticklabels={,,},
    xticklabels={,,},
    view = {0}{90},
    axis line style={draw=none},
]
    \addplot3[
        quiver = {
            u = {(0.7-x)/100},
            v = {(-y)/100}, scale arrows=10
        }, 
        -stealth,
        domain = -5:5,
        domain y = -5:5,
        samples=30,
    ] {0};
\end{axis}
\end{scope}
\draw (-5,-6.5) node {$X\setminus f\inv(t)$};
\end{tikzpicture}
    \caption{The handle attachment procedure from Proposition \ref{prop:surgery}: cores of additional handles are in red and cocores in blue. Length of vectors not to scale.\label{fig:handles}}
\end{figure}
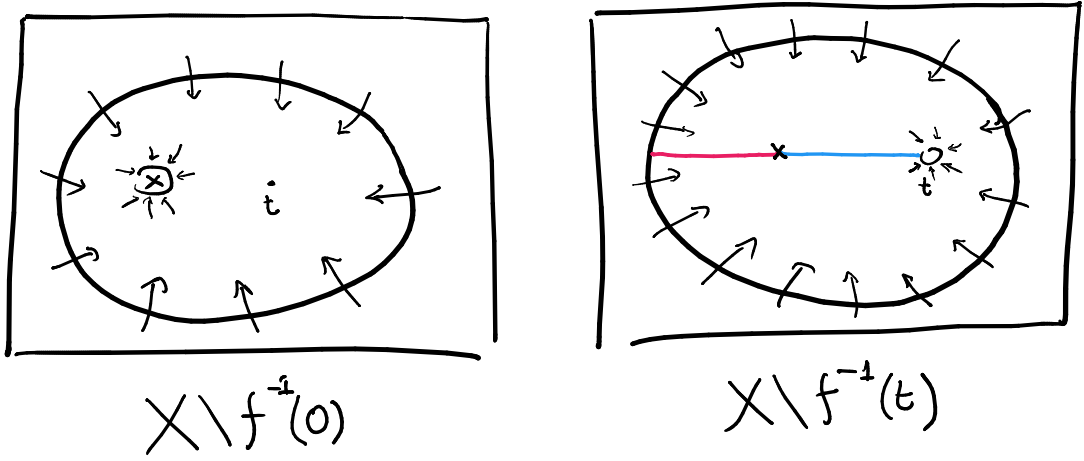

Any critical handles given by Proposition \ref{prop:surgery} will be referred to as \textbf{additional handles}. By applying the stop-removal theorem of Ganatra-Pardon-Shende \cite[Theorem 1.16]{GPS2}, we deduce:

\begin{proposition}\label{prop:stops}
With the Liouville structures constructed above, we have an equivalence of categories between $\scr{W}(X \times \CC, z f)$ and the quotient of $\scr{W}(X \times \CC, z(f-t))$ by the full subcategory $\scr{D}$ of linking disks of the stable manifolds of the additional handles in $X\setminus f\inv(t)$:
\begin{equation*}
    \scr{W}(X \times \CC, z f) \cong \scr{W}(X \times \CC, z(f-t))/\scr{D}
.\end{equation*}
\end{proposition}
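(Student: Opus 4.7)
The plan is to realize both Landau-Ginzburg models as Liouville sectors with a common ambient Liouville structure on $X \times \CC$, differing only in the choice of stop at infinity, and then invoke the stop-removal theorem of Ganatra-Pardon-Shende. The stop of the sector $(X \times \CC, z(f-t))$ is (a thickening of) the general fiber $F_t := X \setminus f\inv(t)$ sitting inside $\partial^{\infty}(X \times \CC)$, while the stop of $(X \times \CC, zf)$ is the general fiber $F_0 := X \setminus f\inv(0)$. By Proposition \ref{prop:surgery}, $F_t$ is obtained from $F_0$ by attaching the additional Weinstein handles, so the skeleton of $F_t$ consists of the skeleton of $F_0$ together with the cores (stable manifolds) of these additional handles. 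Equivalently, $F_0$ is obtained from $F_t$ by deleting the cores of the additional handles from the stop, which is the kind of modification to which stop-removal applies.

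First I would verify that, after applying Lemma \ref{lem:deformation} with a one-parameter family of Stein functions interpolating between those adapted to $zf$ and to $z(f-t)$, the two Liouville sectors share a common ambient Liouville structure on $X \times \CC$ up to simple Liouville homotopy, so that the difference between them is entirely captured by the stop. The homotopy $\psi_{s}$ used in the proof of Proposition \ref{prop:surgery}, combined with the simultaneous inward/outward estimates of Remark \ref{remark:both} on concentric shells around $f\inv(t)$, should give exactly such a simple Liouville homotopy in the sense of Definition \ref{def:simple}, provided the critical points of $\psi_{s}$ do not enter or leave a fixed compact region as $s$ varies. Under this identification, the inclusion of skeleta from Proposition \ref{prop:surgery} lifts to an inclusion of stops within the common Liouville sector.

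I would then apply \cite[Theorem 1.16]{GPS2}: given a Liouville sector with a closed isotropic sub-stop, the wrapped Fukaya category of the sector with that sub-stop removed is quasi-equivalent to the quotient of the original wrapped category by the full subcategory generated by linking disks of the removed sub-stop. Applying this with the sub-stop given by the union of stable manifolds of the additional handles inside $F_t$ yields precisely
\begin{equation*}
    \scr{W}(X \times \CC, z f) \cong \scr{W}(X \times \CC, z(f-t))/\scr{D},
\end{equation*}
where $\scr{D}$ is the full subcategory of linking disks of these stable manifolds, as claimed.

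The main technical obstacle is the first step: showing that the two Landau-Ginzburg models genuinely give rise to compatible Liouville sector structures, so that Proposition \ref{prop:surgery} (which compares fibers at infinity as Weinstein manifolds) translates into an honest inclusion of stops within a single Liouville sector. This requires carefully tracking how the Stein functions on $X \times \CC$ deform as $t$ varies from $0$ to a small nonzero value, and verifying that nothing more than handle attachment occurs along the way—any appearance or disappearance of extraneous critical points of the Weinstein function on the total space would have to be absorbed into the sub-stop $\scr{D}$ or ruled out by suitable genericity assumptions on $f$.
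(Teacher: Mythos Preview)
Your proposal is correct and follows essentially the same approach as the paper: the paper's proof is a one-line deduction from the stop-removal theorem \cite[Theorem 1.16]{GPS2}, using that Proposition \ref{prop:surgery} identifies the passage from the stop $X\setminus f\inv(t)$ to the stop $X\setminus f\inv(0)$ as removal of the cores of the additional handles. Your elaboration of the technical setup (matching Liouville structures via the simple homotopies of Remark \ref{remark:both} and Lemma \ref{lem:deformation}) is more detailed than what the paper writes explicitly, but it is exactly the reasoning implicit in the phrase ``with the Liouville structures constructed above.''
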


\begin{proposition}\label{prop:generation}
For $f$ a function on a Stein manifold $X$ as above, the category $\scr{W}(X,f)$ is split-generated by the cocores of the additional handles.
\end{proposition}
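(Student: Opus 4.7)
The plan is to apply the generation-by-cocores theorem of Ganatra--Pardon--Shende (\cite{GPS}), which states that the wrapped Fukaya category of a Weinstein sector is split-generated by the cocores of its critical handles. It therefore suffices to exhibit a Weinstein presentation of the sector $(X,f)$ whose critical handles are precisely the additional handles of Proposition \ref{prop:surgery}.

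To produce such a presentation, I would use the simple Liouville homotopy of Lemma \ref{lem:deformation} to replace the Stein function on $X$ by one of the form $\phi + D\phi|f|^{2m} + C\phi/|f_t|^{2n}$, with constants chosen as in Remark \ref{remark:both} so that the outward-pointing property of Proposition \ref{prop:outwards} holds on $|f|^2 = \delta$ and the inward-pointing property of Proposition \ref{prop:inward} holds on the concentric level set $|f_t|^2 = \epsilon$. This exhibits the sector as a Weinstein sector with compact domain $\{|f|^2 \leq \delta\} \cap \{|f_t|^2 \geq \epsilon\}$ and stop $f^{-1}(-\delta)$. This sector then decomposes into three pieces: a collar neighbourhood of the stop, which by Proposition \ref{prop:product} is Liouville-equivalent to a trivial product $f^{-1}(-\delta) \times D$ and contributes no critical handles; the annular cobordism region $\epsilon \leq |f_t|^2 \leq \delta$, which is a trivial Weinstein cobordism by the Malgrange-based estimates of Remark \ref{remark:both}; and the inner region, whose Weinstein handle decomposition, by the surgery argument of Proposition \ref{prop:surgery}, consists exactly of the additional handles attached to a trivial base piece. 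Invoking the generation theorem then yields the desired split-generation.

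The principal technical obstacle is to verify that this modified sector genuinely captures the wrapped Fukaya category of the original sector $(X,f)$ of Section \ref{sec:defns}. This requires checking that the interpolating family of Stein functions forms a simple Liouville homotopy in the sense of Definition \ref{def:simple}, so that Lemma \ref{lem:deformation} applies and $\scr{W}(X,f)$ is preserved. In particular, one must ensure that no spurious critical points of the intermediate Stein functions escape a compact set during the homotopy, which is controlled by the quantitative inequality of Remark \ref{remark:both} governing the behaviour on nested level sets of $|f|^2$ and $|f_t|^2$.
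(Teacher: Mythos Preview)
Your proposal has a genuine gap at its core. The Stein function $\phi + D\phi|f|^{2m} + C\phi/|f_t|^{2n}$ has a pole along $f^{-1}(t)$, so it is defined only on $X\setminus f^{-1}(t)$; the interpolation you describe is therefore not a simple Liouville homotopy on $X$ at all, but a change of underlying manifold. The domain $\{|f|^2\le\delta\}\cap\{|f_t|^2\ge\epsilon\}$ you obtain has an \emph{extra convex boundary component} at $|f_t|^2=\epsilon$ carrying no stop, and as a stopped Liouville domain this is not deformation-equivalent to the sector $(X,f)$. Consequently you cannot conclude that its wrapped Fukaya category computes $\scr{W}(X,f)$ by Lemma~\ref{lem:deformation} alone.

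Even granting an identification of categories, a second gap remains: the GPS generation theorem for a stopped Weinstein domain produces as generators both the cocores of the critical handles \emph{and} the linking disks of the stop $F=f^{-1}(-\delta)$. Your three-piece decomposition only accounts for the former; it says nothing about why the linking disks of $F$ are split-generated by the additional cocores. This is exactly where the paper's proof does the real work. The paper works instead on $X\setminus f^{-1}(B_\delta)$ with stops $F$ and $\Lambda$ (the attaching spheres), and uses the wrapping exact triangle of \cite[Theorem~1.9]{GPS2} to express the linking disks of $F$ as twisted complexes built from the linking disks of $\Lambda$ (which become the additional cocores $\ell_i$ after handle attachment) together with certain arc-product Lagrangians $L$ passing below the singular fiber. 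The paper then passes to $X\setminus f^{-1}(t)$, applies Viterbo restriction to a subsector that it carefully shows is deformation-equivalent to $(X,f)$, and finally observes that the objects $L$ become zero there because they can be Hamiltonian-displaced to infinity. It is this last displacement---made possible precisely by the extra end you introduced without explanation---that kills the stop's contribution and leaves only the $\ell_i$. Your outline skips this mechanism entirely.
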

In the course of the proof we shall explain the sense in which the cocores of the additional handles, which we will denote $\ell_i$, are objects of $\scr{W}(X,f)$.
\begin{proof}
We begin by studying the wrapped Fukaya category of $X\setminus f\inv(B_{\delta})$ with the Stein structure given by:
\begin{equation*}
    \psi(x) = \phi(x) + \frac{C \phi(x)}{|f(x)|^{2n}}
\end{equation*}
with $\delta, C, n$ chosen as in the proof of Proposition \ref{prop:surgery}. As in Remark \ref{remark:both}, by changing $\phi$ we have some $R > \delta$ such that the Liouville vector field of $\psi$ is also outward pointing along $|f|\inv(R)$. To this we wish to add stops as follows:
\begin{itemize}
    \item A stop $F$ given by $f\inv(-R)$;
    \item A stop $\Lambda$ on $|f|\inv(\delta)$ given by the framed Legendrian spheres $\Lambda_i$ along which the additional handles $\ell_i$ are attached.
\end{itemize}
By the generation result \cite[Theorem 1.10]{GPS2}, $\scr{W}(X\setminus f\inv(B_{\delta}), F \cup \Lambda)$ is generated by a collection of two kinds of Lagrangians:
\begin{itemize}
    \item Linking disks of $F$ and of $\Lambda_i$;
    \item Lagrangians $L$ given by the product of cocores of $f\inv(t)$ with an arc $\gamma$ connecting $-R$ to $|f|\inv(\delta)$.
\end{itemize}
Using the wrapping exact triangle of \cite[Theorem 1.9]{GPS2}, we can express the linking disks of $F$ in terms of twisted complexes of the Lagrangians $L$ and the linking disks of $\Lambda_i$, as illustrated in Figure \ref{fig:linking}. Note that the arc $\gamma$ with which we take the product lies below the stops $F,\Lambda$ in the complex plane. 

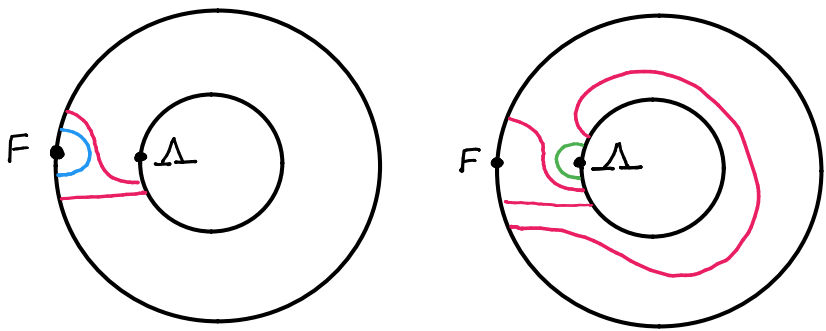
\begin{figure}
\centering
\begin{tikzpicture}[scale=0.5]
\draw[fill=gray!10] (0,0) circle (4);
\draw[fill=white] (0,0) circle (2);
\draw[fill=black] (-2,0) circle (0.1);
\draw[fill=black] (-4,0) circle (0.1);
\draw[thick,red] (-150:2) -- (-165:4);
\path[draw,thick,red,use Hobby shortcut,closed=false] (-160:2).. (180:3) .. (150:3.5) .. (150:4);
\draw[thick,green] (-172.5:4) arc[start angle=-90, end angle=90,radius=0.5];
\draw (-4,0) node[anchor=east] {$F$};
\draw (-2,0) node[anchor=west] {$\Lambda$};
\end{tikzpicture}\begin{tikzpicture}[scale=0.5]
\draw[fill=gray!10] (0,0) circle (4);
\draw[fill=white] (0,0) circle (2);
\draw[fill=black] (-2,0) circle (0.1);
\draw[fill=black] (-4,0) circle (0.1);
\draw[thick,red] (-150:2) -- (-165:4);
\path[draw,thick,red,use Hobby shortcut,closed=false] (-160:2).. (180:3) .. (150:3.5) .. (150:4);
\draw[thick,green] (167.5:2) arc[start angle=90, end angle=270,radius=0.5];
\draw (-4,0) node[anchor=east] {$F$};
\draw (-2,0) node[anchor=west] {$\Lambda$};
\path[draw,thick,red,use Hobby shortcut,closed=false] (160:2) .. (150:2.5) .. (0:3) .. (-90:3.5) .. (-152.5:3.75) .. (-150:4);
\end{tikzpicture}
    \caption{Producing linking disks of $F$ from objects $L$ (in red) and linking disks of $\Lambda$ (in green).\label{fig:linking}}
\end{figure}

Now we pass to $(X \setminus f\inv(t), f)$ by attaching Weinstein handles as in Proposition \ref{prop:surgery}, while keeping the Liouville vector field outward-pointing along $|f|\inv(R)$. Then the linking disks of the $\Lambda_i$ become exactly the cocores $\ell_i$ of the additional handles and so by the gluing result in \cite[Theorem 1.20]{GPS2} we know that $\scr{W}(X \setminus f\inv(t), f)$ is generated by the additional cocores $\ell_i$ and the Lagrangians $L$ (cf. \cite[p.68]{GPS2}). We can take these Lagrangians $L$ to be fibered over an arc $\gamma$ from $-R$ to $f\inv(t)$ that passes below the singular fiber of $f$ and below the stop $F$.  

Now we consider the Weinstein deformation given by:
\begin{equation*}
    \psi_t(x) = \phi(x) + \frac{C \phi(x)}{|f_t(x)|^{2n}}.
\end{equation*}
Arguing similarly to Remark \ref{remark:both}, for all sufficiently small $\epsilon>0$ we can choose $\phi$, larger $C$, $n$, and smaller $R$ and $\delta$ so that there exists some $R\dash >R + \epsilon$ so that all of the points where the Malgrange condition fails for $|f|^2$ lie outside of $|f|\leq R\dash + \epsilon$ and so that
\begin{itemize}
    \item for $0<|t|<R+\epsilon$ the Liouville vector field of $\psi_t$ on $X \setminus f\inv(t)$ is outward pointing along $|f|\inv(r)$ for $R\dash + \epsilon > r \geq R\dash$;
    \item for $R+\epsilon/2 < |t| < R+\epsilon$, the Liouville vector field is outward pointing along $|f|\inv(R)$.
\end{itemize}
 This first property prevents the skeleton of the Liouville domain from escaping outside of a compact set and hence $\psi_t$ gives simple Weinstein deformations of $X \setminus f\inv(t)$ for $0<|t|<R+\epsilon$. Thus the resulting $X\setminus f\inv(t)$ are all exact symplectomorphic and the wrapped Fukaya category $\scr{W}(X\setminus f\inv(t), f)$ remains unchanged (Lemma \ref{lem:deformation}). 

The second property of $\psi_t$ for $R + \epsilon/2 < |t| < R + \epsilon$ means that, since the stop $F$ lives inside $|f|\inv(-R)$, we have a subsector $(f\inv(B_R(0)), f)$ of $(X\setminus f\inv(t), f)$; see Figure \ref{fig:subdomain}. Applying Viterbo restriction, the objects $L$ and $\ell_i$ split-generate $\scr{W}(f\inv(B_R(0)), f)$ (by \cite[Theorem 1.8]{Sylvan2}). But $(f\inv(B_R(0)), f)$ is Weinstein deformation-equivalent to the original sector used to define $\scr{W}(X,f)$ in \S \ref{sec:defns} via the deformation
\begin{equation*}
    \psi_{t,s}(x) = \phi(x) + C \phi(x)\rho_s(f_t(x))
\end{equation*}
for $s \in (0,1]$, where here $\rho_s(z) = \beta_s(|z|)/|z|^{2n} $ for $\beta_s: \RR \to [0,1]$ a smooth cutoff function chosen so that
\begin{align*}
    \beta_s(r) = \begin{cases} 1 & \text{for } r\leq\tan(\uppi s/2) \\
    0 & \text{for } r \geq \tan(\uppi s/2)+\epsilon
    \end{cases}
\end{align*}
and so that $\rho_s(f_t(x))$ is plurisubharmonic (here $\psi_1 = \psi$). Moreover, the Liouville vector field of $\psi_{t,s}$ remains outward pointing along $|f|\inv(r)$ for $R + \epsilon > |t|> r  \geq R+\epsilon/2$ for all $s \in (0,1]$ and so this in fact yields a simple Weinstein deformation. 

Hence we obtain split-generators $L$ and $\ell_i$ of $\scr{W}(X,f)$ (Lemma \ref{lem:deformation}). But the former objects are trivial in this category, since they may be displaced from themselves by a Hamiltonian pushoff to infinity. Hence we obtain the desired split-generation result.
\end{proof}

\begin{figure}
\centering
\begin{tikzpicture}[scale=0.5]
\draw (-5,-4) -- (5,-4) -- (5,4) -- (-5,4) -- (-5,-4);
\draw[fill=black] (-5,0) circle (0.1);
\draw[fill=white] (3,0) circle (0.1);
\draw (-2,0) node {$\huge{\times}$};
\draw (3,0) node[anchor=south] {$t$};
\draw (-5,0) node[anchor=east] {$F$};
\draw (0,-5) node[anchor=north] {$X\setminus f\inv(t)$};
\draw (-2,4) node[anchor=south] {$R$};
\draw[thick,blue] (-2,0) -- (2.9,0);
\draw[thick,red] (-5,-2) to[out=0,in=-90] (2.9,0);
\draw[dashed] (-2,-4) arc[start angle=-60, end angle=60,radius=4.6];
\begin{scope}[shift={(-3,0)}]
\draw[->] (-20:3) -- (-20:4);
\draw[->] (30:3) -- (30:4);
\draw[->] (60:3) -- (60:4.5);
\draw[->] (-70:3.5) -- (-70:4.5);
\end{scope}
\end{tikzpicture}
    \caption{The subdomain constructed in Proposition \ref{prop:generation}: the objects $L$ are in red and the cocores $\ell_i$ in blue. \label{fig:subdomain}}
\end{figure}
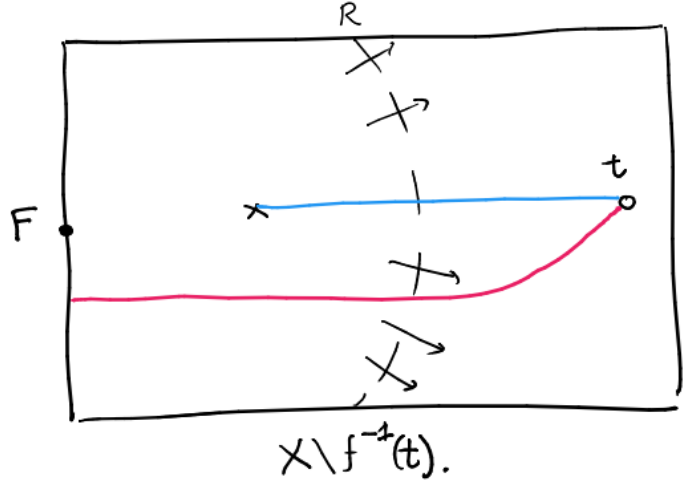

\section{Kn\"orrer Periodicity for Hypersurfaces}\label{sec:proofs}

\subsection{The AAK Equivalence}

We shall begin by proving the smooth case of Kn\"orrer periodicity for hypersurfaces:

\AAK*

First we shall describe the Liouville structures and almost-complex structures we will use on $(X \times \CC, z(f-t))$ to prove this result. We write $F_t = z(f-t)$ in the following. We would like to remind the reader of the conventions in Remark \ref{remark:conventions} concerning the way we construct Liouville structures associated to Landau-Ginzburg models.

Firstly by Proposition \ref{prop:product} since $f\inv(t)$ is smooth for $t\neq 0$, we may \textit{smoothly} identify $V = f\inv(B_{\delta}(t))$ with $f\inv(t) \times \CC$, where $B_{\delta}(t)$ is a small ball around $t \in \CC$. Then in this open set $V \times \CC$, the map $F_t: X \times \CC \to \CC$ is given explicitly by the local Morse-Bott model $f\inv(t) \times  \CC^2 \to \CC$ via $(x,y) \mapsto x y $ where $x,y$ are coordinates in $\CC^2$. 

By Proposition \ref{prop:product}, over an open set $U$ where $f:X \to \CC$ is smoothly trivial, there exists a Liouville deformation $\lambda_t$ to a product Liouville structure on $f\inv(U) \cong f\inv(t) \times U$. We can make this deformation constant outside a larger open set, and since this deformation preserves the fibers this is a simple Liouville homotopy (Definition \ref{def:simple}). We first apply this construction to $U = B_{\delta}(t)$ a small neighbourhood of the smooth fiber $f\inv(t)$ containing no critical points of $f$. Passing to the product $f\inv(B_{\delta}(t)) \times \CC$ with the standard Liouville structure on $\CC$ gives a deformation that remains simple: this deformation gives an isotopy of the fiber of $F_t$ over $-\infty$. By Lemma \ref{lem:deformation} this deformation does not change the wrapped Fukaya category of $(X \times \CC, F_t)$ and so we are free to work with this altered Liouville structure in our proof. Note that this deformation does not change the induced Liouville structure on $f\inv(t)$. 

We apply this argument again, now to the fibration $F_t:X \times \CC \to \CC$: over the subset $\set{\mathrm{Re}(z) > 0} \cup \set{\mathrm{Im}(z) > 0}$ (of course, intersected with a small disk as in \S \ref{sec:defns}) we may again deform the Liouville structure to be of product type, where we take the radial Liouville structure on the base. Note that this deformation leaves the Liouville structure on the smooth fibers $X \setminus f\inv(t)$ unchanged. Once again, we may instead choose to use this Liouville structure on $X \times \CC$ in our arguments.

Now we can make the identification $F_t: f\inv(B_{\delta}(t)) \times \CC \to \CC$ with the map $f\inv(t) \times \CC^2 \to \CC$ given by $(x,y) \mapsto x y $ compatible with the almost-complex structures. Choose a cylindrical almost-complex structure on $f\inv(t)$ and on $X$, and the standard complex structure on $\CC$: we begin by modifying the complex structure on $X$ to agree with the product almost-complex structure on $f\inv(t) \times \CC$ inside $F_{t}\inv(D) \times \CC$ for $D \subset \CC$ a small open disk around $0$, and interpolating with the usual almost-complex structure outside this open subset. Since the Liouville structure is of product type near $f\inv(t)$, the resulting almost-complex structure on $X \times \CC$ can clearly also be chosen to be cylindrical. The same construction could be repeated for any (cylindrical) almost-complex structure on $\CC$.

\begin{proof}(of Theorem \ref{thm:aak})
We begin by choosing a collection $L_i$, $i \in I$ of cylindrical exact Spin Lagrangians inside $f\inv(t)$ representing the cocores of the induced Stein structure on $f\inv(t)$. For each Lagrangian, consider the thimble $T_i$ over $L_i$ given by taking the normalized gradient flow of $\mathrm{Re}(F_t)$ starting along $L_i \subset f\inv(t)$ inside $X \times \CC$. Since each $L_i$ is exact, these $T_i$ are exact Lagrangian submanifolds of $X \times \CC$, and the deformation retraction from $T_i$ to $L_i$ equips them with Spin structures: we claim that they can also be made cylindrical. 

Firstly, there exists some $\epsilon>0$ sufficiently small so that over $B_{\epsilon}(0)$ the thimbles $T_i$ lie entirely inside the Morse-Bott local model $f\inv(B_{\delta}(t)) \times \CC$, by the fact that the exact symplectic structure on $f\inv(B_{\delta}(t))$ is a product. One can check explicitly in the local Morse-Bott model that near $F_{t}\inv(0)$ the $T_i$ are cylindrical at fiberwise $\infty$: they are given by the product of $L_i$ with the standard Lefschetz thimble inside $\CC^2$. Now we need to check at infinity in the base. Outside of the neighbourhood $B_{\epsilon}(0)$ (where all our Lagrangians will intersect) we can instead choose to flow by the Liouville vector field rather than the parallel transport, making the resulting Lagrangian cylindrical at $\infty$.

Now, for each $T_i$ choose a cofinal sequence $T_{i}^{(t)}$ (generically, to ensure transversality of intersections for finite totally ordered sets, see \cite[Definition 3.34]{GPS}) using the time-$t$ flow of the complete Reeb vector field constructed in \cite[Lemma 3.29]{GPS}: by the product form of the Liouville structure over $\set{\mathrm{Re}(z)>0}$, any Reeb flow is given by a rotation in the base: thus we may take a compactly-supported Hamiltonian isotopy to make $T_{i}^{(t)}$ lie over a ray inside $B_{\epsilon}(0)$. Taking $L_{i}^{(t)}$ to be the intersection of $T_{i}^{(t)}$ with the critical locus, by \cite[Lemma 3.29]{GPS}, the $L_{i}^{(t)}$ also form a cofinal sequence, since the quantity:
\begin{equation*}
    \int_{0}^{\infty} \min_{\partial_{\infty} L_t} \alpha(\partial_{\infty} \partial_t L_t) \dd{t}
\end{equation*}
can only increase when restricted to a smaller set. To define $\scr{W}(X \times \CC, F_t)$, choose a collection of Lagrangians with cofinal sequences that includes the $T_{i}^{(t)}$ as a subcollection. 

Now consider the directed Fukaya category $\scr{O}$ given by the $L_{i}^{(j)}$s inside $f\inv(t)$, and the directed Fukaya category $\scr{A}$ given by the $T_{i}^{(t)}$ inside $(X \times \CC, F_t)$, as in \cite[Definition 3.35]{GPS}. We have that for $j>j\dash$,  $\scr{A}(T_{i}^{(j)}, T_{i\dash}^{(j\dash)})$ is given by the Floer complex $CF(T_{i}^{(j)}, T_{i\dash}^{(j\dash)})$: in the base, $T_{i}^{(j)}$ lies over a radial ray that is counterclockwise from $T_{i\dash}^{(j\dash)}$ and hence they intersect only in the fiber $F_{t}\inv(0)$, along the transverse intersection points $L_{i}^{(j)} \cap L_{i\dash}^{(j\dash)}$. Thus $\scr{A}(T_{i}^{(j)}, T_{i\dash}^{(j\dash)}) = \scr{O}(L_{i}^{(j)}, L_{i\dash}^{(j\dash)})$. The additivity of the Maslov index under products, and the fact that $T_{i}^{(j)}$ lies over a radial ray that is strictly counterclockwise from $T_{i\dash}^{(j\dash)}$ means that this is an isomorphism of graded vector spaces. For $j \leq j\dash$ the morphism spaces obviously coincide. Next we shall match the $A_{\infty}$ operations on both directed categories. 

Given a collection of thimbles $T_{i}^{(j_n)}$ for $n=0, \dots, k$ with $j_0 > \cdots > j_k$, choose appropriate (depending on the domain and conformal structure) cylindrical almost-complex structures $J$ for $f\inv(t)$ that make the moduli spaces used to define the operation $\mu^k$ for $L_{i}^{(j_n)}$ regular (as in \cite[p.41]{GPS}), and extend these to cylindrical almost-complex structures on $X \times \CC$ as described above. With respect to such almost-complex structures, the map $F_t: X \times \CC \to \CC$ is $J$-holomorphic in an open set containing all of the intersection points of $L_{i}^{(j_n)}$ (and hence of $T_{i}^{(j_n)}$). Therefore for any holomorphic disk defining the operation $\mu^k$ for the collection $T_{i}^{(j_n)}$, the projection under $F_t$ must be a holomorphic disk in $\CC$ with respect to the standard complex structure (over the entire domain and for every conformal structure). The maximum principle then implies that the holomorphic disk must be contained in $F_{t}\inv(0)$. Another application of the maximum principle with the holomorphic projection $f\inv(t) \times \CC^2 \to \CC^2$ in the local Morse-Bott model shows that all of the holomorphic disks must be contained inside $f\inv(t)$. We claim that these must regular with the chosen almost-complex structures on $X \times \CC$ using the argument from \cite[\S 14c]{SeidelBook}. Because of the product almost-complex structure on the local Morse-Bott model, the linearized Cauchy-Riemann operator splits into a direct sum of linearized Cauchy-Riemann operators on the base $\CC$ and on the fiber $f\inv(t)$. The latter is surjective, by assumption. For the former, since  $T_{i}^{(j_{n+1})}$ always lies over a ray in $B_{\epsilon}(0)$ which is \textit{clockwise} from $T_{i}^{(j_{n})}$, this means that the Maslov index is $0$ and hence the linearized Cauchy-Riemann operator on $\CC$ has index $0$. Then by \cite[Lemma 11.5]{SeidelBook} this linearized Cauchy-Riemann operator on $\CC$ is injective and hence surjective also. Thus the moduli spaces used to define the $A_{\infty}$ operations in $\scr{O}$ and $\scr{A}$ can be made to coincide exactly. Moreover, the discussion of Spin structures from \cite[Corollary 7.8]{AAK} carries over verbatim and shows that the moduli spaces of disks that appear carry the same orientations. Hence we have an \textit{equality} of $A_{\infty}$-categories between $\scr{O}$ and $\scr{A}$. 

Much the same argument as in the previous paragraph carries over to the construction of continuation maps $T_{i}^{(j+1)} \to T_{i}^{(j)}$, so that after localization we have an equivalence of categories between $\scr{W}(f\inv(t))$ and $\scr{A}[C\inv]$. By construction, $\scr{A}[C\inv]$ includes into $\scr{W}(X \times \CC, F_t)$ as a full subcategory, and the composition of this inclusion with the equivalence $\scr{W}(f\inv(t)) \to \scr{A}[C\inv]$ gives the desired functor.

To upgrade $T$ to an equivalence of categories, we will need to know that the category $\scr{W}(X \times \CC, z(f-t))$ is generated by thimbles. Since in a neighbourhood of the critical locus of the function $z(f-t)$ the Stein structure can be deformed into a product, the cocores of the index-$n$ critical points of the deformed Stein function will be given by products of cocores of critical handles for $f\inv(t)$ with the ordinary Lefschetz thimble of $z_1 z_2$ on $\CC^{2}$. By Proposition \ref{prop:generation}, these Morse-Bott thimbles are split-generators of $\scr{W}(X,z (f-t))$. Compare also \cite{SeidelNotes,AG} for Morse-Bott generation by thimbles.
\end{proof}

Now we turn to the relative version of this theorem:

\relAAK*

The second category here is \textit{fiberwise stopped} with respect to $g$ as in \cite{AA}, where admissible Lagrangians are fibered over arcs under the original map $f:X \to \CC$ while also being admissible in the fibers of $f$ for the restriction $g|_{f\inv(t)}$.

\begin{definition}\label{defn:relative}
Given $X$ a Stein manifold with $f,g:X \to \CC$ two holomorphic functions, we define the \textbf{relative Fukaya-Seidel category} $\scr{W}(X, f, g)$ to be the wrapped Fukaya category $\scr{W}(X, F \cup G)$ of $X$ with \textbf{relative stop} given by the union of:
\begin{itemize}
    \item $F$ the relative skeleton of the sector $(f\inv(-\infty), g)$, inside $\partial^{\infty}X$;
    \item $G$ the union of $(g|_{f\inv(t)})\inv(-\infty) \subset \partial^{\infty} f\inv(t) \subset \partial^{\infty}X$ for $t$ along an arc connecting $-\infty$ to $0$;
\end{itemize}
as illustrated in Figure \ref{fig:relative}.
\end{definition}

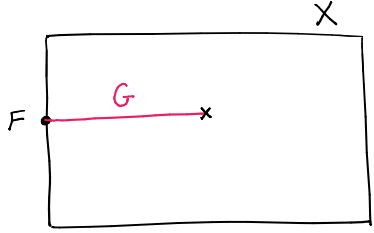
\begin{figure}
\centering
\begin{tikzpicture}[scale=0.4]
\draw (-5,-4) -- (5,-4) -- (5,4) -- (-5,4) -- (-5,-4);
\draw[fill=black] (-5,0) circle (0.1);
\draw (0,0) node {$\huge{\times}$};
\draw (-5,0) node[anchor=east] {$F$};
\draw (-2,4) node[anchor=south] {$X$};
\draw[red] (-2,0) node[anchor=south] {$G$};
\draw[thick,red] (-4.9,0) -- (0,0);
\end{tikzpicture}
    \caption{The relative stop. \label{fig:relative}}
\end{figure}

This stop can be thought of as the boundary at infinity of the relative skeleton of the relative skeleton. In the sequel, we shall prove that under strong assumptions on $f, g$ and $X$, the relative Fukaya-Seidel category is indeed equivalent to the Fukaya-Seidel category $\scr{W}(X, f + \delta g)$ for $\delta>0$ sufficiently small. This equivalence certainly should not hold in general.

Because of this construction, the proof of Theorem \ref{thm:aak} carries over exactly to show that taking thimbles over admissible Lagrangians in the critical locus gives a fully faithful functor
\begin{equation*}
    T: \scr{W}(f\inv(t),g) \to \scr{W}(X \times \CC, z(f-t), g)
.\end{equation*}
We expect this to be an equivalence under an appropriate modification of Proposition \ref{prop:generation}.

\subsection{Proof of Main Theorem}

Since there are two Landau-Ginzburg models under consideration, it will be important to establish notation for the corresponding cap and cup functors:
\begin{itemize}
    \item For the Landau-Ginzburg model $(X,f)$ we shall use $\cap: D\scr{W}(X,f) \to D\scr{W}(f\inv(t))$ and $\cup: D\scr{W}(f\inv(t)) \to D\scr{W}(X,f)$;
    \item For the Landau-Ginzburg model $(X \times \CC,  F_t)$ we shall use instead the notation $\bigcap: D\scr{W}(X \times \CC, F_t) \to D\scr{W}(X\setminus f\inv(t))$ and $\bigcup: D\scr{W}(X\setminus f\inv(t)) \to D\scr{W}(X \times \CC, F_t)$.
\end{itemize}
We will regard objects of $D\scr{W}(X\setminus f\inv(t),f)$ as living inside $D\scr{W}(X,f)$ via the Viterbo restriction map constructed in Proposition \ref{prop:generation}.

Our result will follow by showing that under the AAK equivalence in Theorem \ref{thm:aak}, the cap functor $\cap: \scr{W}(X,f) \to \scr{W}(f\inv(t))$ applied to the cocore $\ell \subset X$ of an additional handle, is isomorphic in $\scr{W}(X\times \CC, z(f-t))$ to the linking disk of the core of this additional handle for the Liouville structure on $X\setminus f\inv(t)$. By \cite[\S 7.2]{GPS2}, the linking disks of these handles are given by the cup functor $\bigcup: \scr{W}(X\setminus f\inv(t)) \to \scr{W}(X\times \CC, z(f-t))$ applied to the cocores $\ell$ of the handles. Therefore this isomorphism is exactly \cite[Lemma A.28]{abouzaidSmith}: unfortunately the proof sketched therein is difficult to make rigorous. The isomorphism would also follow from largely formal arguments involving the adjunctions constructed in \cite{AG}. Here we provide an alternative argument independent of \cite{AG}.  

Using the Yoneda lemma for $A_{\infty}$-categories \cite[Corollary 14.7]{SeidelBook} what we wish to show is that:

\begin{theorem}\label{thm:cones}
For every additional cocore $\ell$, we have an equivalence of left modules over $\scr{W}(f\inv(t))$:
\begin{equation*}
    T^{\ast} \scr{Y}_{\bigcup(\ell)} \cong \scr{Y}_{\cap \ell}
\end{equation*}
where here $\scr{Y}: \scr{W}(f\inv(t)) \to \scr{W}(f\inv(t))\mathrm{Mod} $ is the left Yoneda functor, and $T^{\ast}$ denotes the pullback of left modules.
\end{theorem}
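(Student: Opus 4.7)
The proof plan is a Künneth-type computation in the local Morse-Bott model of $F_t = z(f-t)$ around its critical locus, in the spirit of the proof of Theorem~\ref{thm:aak}. By Proposition~\ref{prop:product} and the choice of Liouville and almost-complex structures from Theorem~\ref{thm:aak}, I would identify a neighbourhood of the critical locus $f\inv(t) \times \set{0} \subset X \times \CC$ with the product Morse-Bott model $f\inv(t) \times \CC^2$ with $F_t = xy$, and then arrange that in this neighbourhood both Lagrangians of interest are of product form: $TL$ locally equals $L \times T_{\mathrm{Lef}}$ where $T_{\mathrm{Lef}}$ is the Lefschetz thimble of $(\CC^2, xy)$, while $\bigcup \ell$ locally equals $\ell_0 \times D_{\mathrm{link}}$, where $\ell_0 \in \scr{W}(f\inv(t))$ is the ``base'' Lagrangian obtained by symplectic parallel transport of a slice of the cocore $\ell$ back into $f\inv(t)$, and $D_{\mathrm{link}}$ is a small linking disk of the stop $\set{xy = -\delta}$ in $(\CC^2, xy)$.

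Granted this setup, I would compute $\hom_{\scr{W}(X \times \CC, F_t)}(TL, \bigcup \ell)$ via Künneth. Positively wrap $TL$ along the Reeb flow (which is rotation in the base, by the product Liouville structure over the relevant sector) until its projection meets that of $\bigcup \ell$ only near the origin of the base $\CC$, inside the local Morse-Bott neighbourhood. Any pseudoholomorphic strip contributing to the Floer complex projects under $F_t$ to a holomorphic disk in $\CC$; the maximum principle (exactly as in Theorem~\ref{thm:aak}) forces this projection to be constant at $0$, so the strip sits inside the critical fiber, and a second application of the maximum principle with the projection $f\inv(t) \times \CC^2 \to \CC^2$ forces the strip to split as a product of a strip in $f\inv(t)$ between $L$ and $\ell_0$ and a strip in $(\CC^2, xy)$ between $T_{\mathrm{Lef}}$ and $D_{\mathrm{link}}$. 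The latter hom complex is one-dimensional in a single degree by direct computation, and regularity of the product moduli spaces follows exactly as in the proof of Theorem~\ref{thm:aak}. Extending this argument to all higher $A_\infty$-operations with perturbation data respecting the product structure gives an isomorphism of left $\scr{W}(f\inv(t))$-modules $T^{\ast}\scr{Y}_{\bigcup \ell} \cong \scr{Y}_{\ell_0}$ (up to an overall degree shift absorbed into conventions).

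To conclude, I would identify $\scr{Y}_{\ell_0}$ with $\scr{Y}_{\cap \ell}$. By definition, $\cap$ is the pullback of Yoneda modules along $\cup$, so $\scr{Y}_{\cap \ell}(L) = \hom_{\scr{W}(X,f)}(\cup L, \ell)$; the plan is to run a parallel Künneth argument in the LG model $(X, f)$ to compute this directly. Positively wrap $\cup L$ so that it sweeps past $f\inv(t)$ and meets $\ell$ inside a product neighbourhood $f\inv(t) \times \CC$ provided by Proposition~\ref{prop:product}; the maximum principle localizes all Floer disks to this neighbourhood and splits them into products, producing $\hom_{\scr{W}(X,f)}(\cup L, \ell) \cong \hom_{\scr{W}(f\inv(t))}(L, \ell_0)$ with the \emph{same} $\ell_0$ that appears on the other side, since $\ell_0$ is geometrically determined by the cocore $\ell$ via symplectic parallel transport. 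Combining both Künneth computations yields $T^{\ast}\scr{Y}_{\bigcup \ell} \cong \scr{Y}_{\cap \ell}$ as required. The main technical obstacle is ensuring the Künneth splitting persists at the level of all higher $A_\infty$-operations and is natural in $L$, requiring coherent perturbation data adapted to the product decompositions on both sides; a secondary difficulty is justifying that the ``base'' Lagrangian $\ell_0$ extracted from $\bigcup \ell$ on the left-hand side matches the one extracted from $\ell$ on the right-hand side, which is geometrically natural but requires tracking the symplectic parallel transport carefully.
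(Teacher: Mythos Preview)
Your overall architecture matches the paper's: localize to the Morse--Bott neighbourhood $f\inv(t)\times\CC^2$, confine disks there by the maximum principle, and run a K\"unneth-type comparison. Two points deserve comment.

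First, the detour through an auxiliary $\ell_0$ and a second K\"unneth computation in $(X,f)$ is unnecessary. The paper observes directly that every additional cocore $\ell$ has its Liouville-cylindrical end in the product collar $f\inv(t)\times\CC^{\ast}\subset X\setminus f\inv(t)$, where it is fibered over a ray in $\CC^{\ast}$; the slice over a single fiber is then \emph{by definition} the geometric Lagrangian representing $\cap\ell$. So the intersection $TL\cap\bigcup\ell$ is identified with $L\cap(\cap\ell)$ immediately, and your $\ell_0$ is already $\cap\ell$. (If you insist on the purely module-theoretic definition of $\cap$ as pullback along $\cup$, then your second K\"unneth is a legitimate extra step; the paper sidesteps this by working geometrically.)

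Second, and more substantively, your treatment of the higher module operations has a genuine gap. Saying ``perturbation data respecting the product structure'' does not resolve the issue, because a $(k{+}1)$-pointed disk in $f\inv(t)\times\CC^2$ does \emph{not} split as an independent product of $(k{+}1)$-pointed disks in each factor: the conformal structure on the domain is shared. Projecting to $f\inv(t)$ gives a disk contributing to $\mu^k$ there (conformal structure free), but projecting to $\CC^2$ gives a disk with boundary on the thimbles $T_1,\dots,T_k$ and the linking arc $U$ whose conformal structure is now \emph{fixed} by the $f\inv(t)$-component. This is not an $A_\infty$-count in $(\CC^2,xy)$, so knowing $\hom(T_{\mathrm{Lef}},D_{\mathrm{link}})$ is one-dimensional does not by itself give the required bijection of moduli spaces. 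The paper's actual mechanism here is a parametrized cobordism: choose a path $\gamma$ in $\bar{\scr{S}}_k$ from the given conformal structure $r$ to the deepest stratum (a one-legged tree of $3$-pointed disks), form the parametrized moduli space of $\CC^2$-disks over $\gamma$, and observe it is a compact $1$-manifold whose boundary over the degenerate end consists of a single broken disk (the iterated $\mu^2$ with identities). Hence there is exactly one $\CC^2$-disk over $r$ as well, and the fiber-product of moduli spaces over $\scr{S}_k$ collapses to the desired bijection $\mathscr{M}^{X\times\CC}\cong\mathscr{M}^{f\inv(t)}$. This cobordism argument is the technical heart of the proof and is precisely what your phrase ``coherent perturbation data adapted to the product decompositions'' leaves unaddressed.
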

\begin{proof}
For this proof we will continue to use the Liouville structures constructed on $X \times \CC$ above; note that with respect to the induced Liouville structure, every fiber of $F_t$ contains an open neighbourhood diffeomorphic to $f\inv(t) \times \CC^{\ast}$ where the Liouville vector field is given by the product with the standard Liouville vector field given by:
\begin{equation*}
    Z = -\frac{\epsilon}{r^3} \partial_{r}
\end{equation*}
where $r = |z|$ is the radial coordinate. 

Given any Lagrangian $L$ in $\scr{W}(f\inv(t))$, the thimble $TL$ and $\bigcup \ell$ intersect in a single fiber of $F_t$, say over $0 < \delta \in \CC$, where we take $\delta>0$ sufficiently small so that over $B_{\delta}$, the Lagrangian $TL$ will be contained entirely inside the Morse-Bott model. Because of the product structure, in the fiber $F_{t}\inv(\delta)$ the Lagrangian $TL$ is given by $L \times K \subset f\inv(t) \times \CC^{\ast} \subset X\setminus f\inv(t)$ for $K \subset \CC^{\ast}$ a small circle around $0$. By our definition of the cup functor, $\bigcup \ell$ in this fiber $F_{t}\inv(\delta)$ is given exactly by the cocore $\ell$; see Figure \ref{fig:modules1}.

Observe that in the analysis of the Stein structure on $X\setminus f\inv(t)$ in Proposition \ref{prop:surgery} every additional cocore must have boundary in $f\inv(t)$: they cannot remain in a compact set, since there are no index-$(n+1)$ critical points of the Stein function, and they cannot have boundary in $|f -t|\inv(\delta)$ because the Liouville vector field is inward-pointing. The Liouville structure we use on $X \times \CC$ to define the Fukaya category is a deformation of this Stein structure, but this exact deformation preserves flow lines of the Liouville vector field. Because of the local form of the Liouville vector field on $f\inv(t) \times \CC^{\ast} \subset X\setminus f\inv(t)$ this means that $\ell$ is fibered over $\CC^{\ast}$ and hence intersects $L \times K$ in a single fiber of $f$. Inside this fiber, $\bigcup \ell$ is given by $\cap \ell$, by our definition of the cap functor, and hence $TL$ and $\bigcup \ell$ intersect exactly along $L \cap (\cap \ell) \subset f\inv(t)$; see Figure \ref{fig:modules1}. Thus we have an isomorphism of vector spaces:
\begin{equation*}
    T^{\ast} \scr{Y}_{\bigcup(\ell)}(L) = CF\br{\bigcup \ell, TL} \cong CF(\cap \ell, L) = \scr{Y}_{\cap \ell}(L)
.\end{equation*}
Moreover, by additivity of the Maslov index under products, this is in fact an isomorphism of graded vector spaces. It remains to match the $A_{\infty}$ module operations $\mu^k$.

\begin{figure}
\centering
\begin{tikzpicture}[scale=0.75]
\draw[thick] (-5,-4) to (3,-4) to (5,0) to (-3,0) to (-5,-4);
\draw (0,-2) node {$\huge{\times}$};
\draw[fill=black] (2,-2) circle (0.05);
\draw[dashed] (2,-2) -- (2,1);
\draw[dashed] (0,-2) ellipse (3 and 1.5);
\draw[red,thick] (0,-2) -- (4,-2);
\draw (4,-2) node[anchor=west] {$T(L)$};
\draw[fill=black] (2,-2) circle (0.05);
\draw[fill=black] (-4,-2) circle (0.05);
\draw (-4,-2) node[anchor=east] {$F$};
\path[draw,thick,blue,use Hobby shortcut,closed=false] (-2,0) .. (-2,-2) .. (0,-3) .. (2,-2) .. (3,0);
\draw (-2,0) node[anchor=south] {$\cup \ell$};
\begin{scope}[shift={(2,3)},scale=0.5]
\draw[thick] (-5,-4) to (3,-4) to (5,0) to (-3,0) to (-5,-4);
\draw (-2,-2) node {$\huge{\times}$};
\draw[fill=white] (2,-2) circle (0.1);
\draw[thick,blue] (-2,-2) to (1.9,-2);
\draw[dashed] (2,-2) ellipse (1.9 and 1);
\draw[thick,red] (2,-2) ellipse (1 and 0.5);
\draw[fill=black] (1,-2) circle (0.1);
\draw[<-] (1,-2) -- (1,1);
\draw (1,1) node[anchor=south] {$TL \cap (\cup \ell)$};
\end{scope}
\end{tikzpicture}
    \caption{Identifying the modules as graded vector spaces.\label{fig:modules1}}
\end{figure}
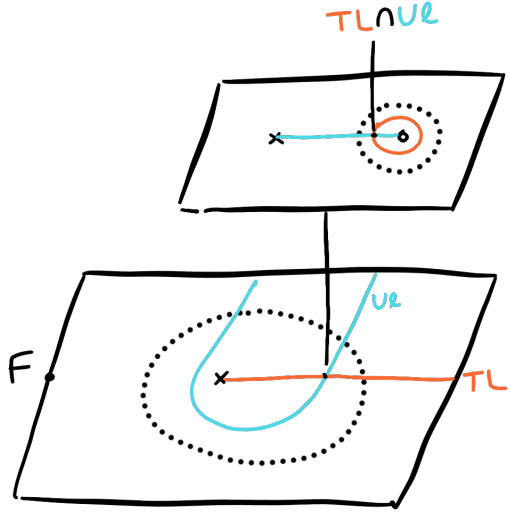

We begin with the differential $\mu^1$. In this case, the intersection of $TL$ and $\bigcup \ell$ is contained entirely inside the fiber $f\inv(t)$ inside the fiber $F_{t}\inv(\delta)$. An iterated version of the maximum principle argument in Theorem \ref{thm:aak} applied to the holomorphic maps $F_t$ and $f$ shows that the holomorphic disks computing $\mu^1: T^{\ast} \scr{Y}_{\bigcup(\ell)}(L) \to T^{\ast} \scr{Y}_{\bigcup(\ell)}(L)$ are all contained in this fiber $f\inv(t) \subset F_{t}\inv(\delta)$; and by a variation of the transversality argument there, we see that any almost-complex structures used to achieve transversality for disks in the fiber $f\inv(t)$ can be extended to $X \times \CC$ so as to make the disks in the total space for $\mu^1: T^{\ast} \scr{Y}_{\bigcup(\ell)}(L) \to T^{\ast} \scr{Y}_{\bigcup(\ell)}(L)$ regular also. 

Now we consider the product operation $\mu^2$ as a simpler prototype of the argument to follow; suppose $L_1, L_2$, are exact cylindrical Lagrangians in $f\inv(t)$, and let $T^i L_{(i)} = T_{i}^{(i)}$ be their thimbles as in the proof of Theorem \ref{thm:aak}. Suppose we have intersection points $y_1 \in (\cap \ell) \cap L_1$, $x \in L_1 \cap L_{2}$ and $y_2 \in T^{(2)}L_2 \cap (\cap \ell)$, and that we have chosen regular almost-complex structures $J$ on $f\inv(t)$ (domain and conformal-structure dependent) for the moduli spaces $\mathscr{M}^{f\inv(t)}_{J}(y_1, x, y_2)$ of disks defining the operation:
\begin{equation*}
     CF(L_{2}, L_{1}) \otimes \scr{Y}_{\cap \ell}(L_{2})  \to \scr{Y}_{\cap \ell}(L_1)
.\end{equation*}
Extend these almost-complex structures to ones on $X \times \CC$ that are of product type on the neighbourhood $f\inv(t) \times \CC$ as described above, which we shall also denote by $J$.

By Theorem \ref{thm:aak} and the above argument, these intersection points correspond to intersection points $y_1 \in \bigcup \ell \cap T^{(1)} L_1$ living in $F_{t}\inv(\delta)$, an intersection point $x \in T^{(1)}L_1 \cap T^{(2)}L_{2}$ lying in $F_{t}\inv(0)$, and $y_2 \in T^{(2)}L_2 \cap \bigcup \ell$, living in $F_{t}\inv(\e{i\theta} \delta)$ for $0< \theta<\pi$ by our construction of the thimbles $T_{i}^{(i)}$. Consider now the moduli space of holomorphic disks $\mathscr{M}^{X \times \CC}_{J}(y_1, x, y_2)$ inside $X \times \CC$ associated to the operation
\begin{equation*}
     CF(T^{(2)}L_2, T^{(1)}L_1) \otimes \scr{Y}_{\bigcup \ell}(T^{(2)} L_{2}) \to \scr{Y}_{\bigcup \ell}(T^{(1)}L_1)
.\end{equation*}
All of these intersection points lie inside the local Morse-Bott model $f\inv(t) \times \CC^2$ and so by the maximum principle for the standard almost-complex structure on $\CC$ applied to the projections under the holomorphic maps $F_t$ and $f$, all of these disks must lie inside the local Morse-Bott neighbourhood $f\inv(t) \times \CC^2 \subset F_{t}\inv(B_{\delta})$. Inside this Morse-Bott model, we have various holomorphic projections. Firstly, projection to $f\inv(t)$ gives a surjective map
\begin{equation*}
    p: \mathscr{M}^{X \times \CC}_{J}(y_1, x, y_2) \to \mathscr{M}^{f\inv(t)}_{J}(y_1, x, y_2)
\end{equation*}
because of how the almost-complex structures have been defined. We claim that this is in fact a bijection, and that $\mathscr{M}^{X \times \CC}_{J}(y_1, x, y_2)$ is regular with respect to this almost-complex structure. To see this, suppose we have such a disk $(u, r) \in \mathscr{M}^{X \times \CC}_{J}(y_1, x, y_2)$ for $u:D \to X \times \CC$ and $r$ the \textit{unique} conformal class of $3$-pointed disks $D$: then we can show that it is determined by its projection to $f\inv(t)$ as follows.

Let $\pi: f\inv(t) \times \CC^2 \to \CC^2$ be the (holomorphic) projection to the second factor. We consider the holomorphic map $\pi \circ u$ from a 3-pointed disk $D$ to $\CC^2$. Since there is a unique conformal equivalence class of 3-pointed disk, this holomorphic disk $(\pi \circ u, r)$ computes the operation $\mu^2: CF(T_{2}, T_1) \times CF(U,T_{2})  \to CF(U, T_1)$ where $T_i,U \subset \CC^2$ are the Lagrangians obtained by the projections $\pi(T_{(i)}L_i), \pi(\bigcup \ell)$, respectively; see Figure \ref{fig:modules2}. These $T_i$ are the thimbles for the standard Lefschetz fibration on $\CC^2$ and hence intersect transversely at the origin. The Lagrangian $U$ is the parallel transport of the real axis $\RR_{+}$ inside the generic fiber $\set{z_1 z_2 = \delta} \cong \CC^{\ast}$ of the model Lefschetz fibration, and so intersects each of the $T_i$ transversely in a single point.

For a generic almost-complex structure $J$ on $\CC^2$, this moduli space will consist of a single regular disk computing the product of the unique point $p \in U \cap T_{1}$ with the identity of $T_1 \cong T_{2}$. Applying the previous construction of an almost-complex structure on $X \times \CC$ with this $J$ in place of the standard almost-complex structure on $\CC^2$, we see that we can ensure that for any $(u,r) \in \mathscr{M}^{X \times \CC}_{J}(y_1, x, y_2)$, both $p(u,r)$ and $\pi(u,r)$ are regular. But this shows exactly that there is a unique holomorphic disk in the preimage under $p$ of any disk in $\mathscr{M}^{f\inv(t)}_{J}(y_1, x, y_2)$. It is straightforward to check that the induced orientations on these moduli spaces agree.

\begin{figure}
\centering
\begin{tikzpicture}[scale=0.7]
\fill[fill=green!10] (0,0) -- (2,0) -- (2,1) -- (0,0); 
\draw (-5,-4) -- (5,-4) -- (5,4) -- (-5,4) -- (-5,-4);
\draw (0,-4.5) node[anchor=north] {$X\times \CC$};
\draw (0,0) node {$\huge{\times}$};
\draw[thick,red] (0,0) -- (5,0);
\draw[thick,red] (0,0) -- (2,1) -- (5,2.5);
\draw[dashed] (0,0) circle (3);
\draw[thick,blue] (-2,0) arc[start angle=-180, end angle=0,radius=2];
\draw[thick,blue] (-2,0) -- (-2,4);
\draw[thick,blue] (2,0) -- (2,4);
\draw[fill=black] (2,0) circle (0.05);
\draw[fill=black] (2,1) circle (0.05);
\draw (5,0) node[anchor=west] {$TL_1$};
\draw (5,2.5) node[anchor=west] {$TL_2$};
\draw (2,4.5) node[anchor=east] {$\cup \ell$};
\draw (0,0) node[anchor=north east] {$x$};
\draw (2,0) node[anchor=north west] {$y_1$};
\draw (2,1) node[anchor=south east] {$y_2$};
\end{tikzpicture}
    \caption{Identifying the $\mu^2$ operations on the modules: the holomorphic disk is in green. \label{fig:modules2}}
\end{figure}
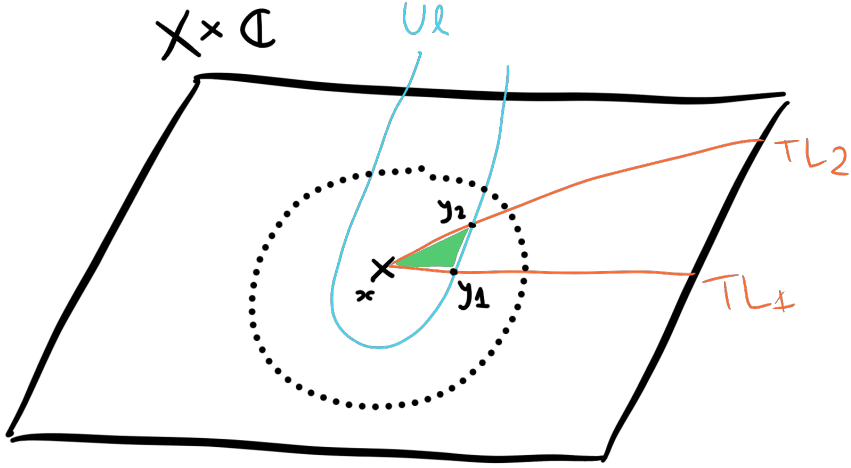


For the higher $\mu^k$ operations, we begin by generalizing the above argument for $\mu^2$. Suppose $L_i$, $i=1, \dots, k$ are exact cylindrical Lagrangians in $f\inv(t)$, and let $T^{(i)}L_i = T_{i}^{(i)}$ be their thimbles as in the proof of Theorem \ref{thm:aak}. Suppose we have intersection points $y_1 \in (\cap \ell) \cap L_1$, $x_i \in L_i \cap L_{i+1}$ for $i=1, \dots, k-1$ and $y_2 \in T^{(k)}L_k \cap (\cap \ell)$, and that we have chosen regular almost-complex structures $J$ on $f\inv(t)$ (domain and conformal-structure dependent) for the moduli space $\mathscr{M}^{k, f\inv(t)}_{J}(y_1, x_1, \dots, x_k, y_2)$ of disks defining the operation:
\begin{equation*}
    CF(L_2, L_1) \otimes \cdots CF(L_k,L_{k-1}) \otimes  \scr{Y}_{\cap \ell}(L_{k}) \to \scr{Y}_{\cap \ell}(L_1)
.\end{equation*}
Extend these almost-complex structures to ones on $X \times \CC$ that are of product type on the neighbourhood $f\inv(t) \times \CC$ as described above. 

By Theorem \ref{thm:aak} and the above argument, these intersection points correspond to intersection points $y_1 \in \bigcup \ell \cap T^{(1)} L_1$ living in $F_{t}\inv(\delta)$, intersection points $x_i \in T^{(i)}L_i \cap T^{(i+1)}L_{i+1}$ for $i=1, \dots, k-1$ lying in $F_{t}\inv(0)$ and $y_2 \in T^{(k)}L_k \cap \bigcup \ell$, living in $F_{t}\inv(\e{i\theta} \delta)$ for $0< \theta<\pi$ by our construction of the thimbles $T_{i}^{(i)}$. Consider now the moduli space of holomorphic disks $\mathscr{M}^{k, X \times \CC}_{J}(y_1, x_1, \dots, x_k, y_2)$ inside $X \times \CC$ associated to the operation
\begin{equation*}
     CF(T^{(2)}L_2, T^{(1)}L_1) \otimes \cdots \otimes CF(T^{(k)} L_{k}, T^{(k-1)} L_{k-1}) \otimes  \scr{Y}_{\bigcup \ell}(T^{(k)} L_{k}) \to \scr{Y}_{\bigcup \ell}(T^{(1)}L_1)
.\end{equation*}
All of these intersection points lie inside the local Morse-Bott model $f\inv(t) \times \CC^2$ and so by the maximum principle for the standard almost-complex structure on $\CC$ applied to the projections under the holomorphic maps $F_t$ and $f$, all of these disks must lie inside the local Morse-Bott neighbourhood $f\inv(t) \times \CC^2 \subset F_{t}\inv(B_{\delta})$. Inside this Morse-Bott model we have various holomorphic projections. Firstly, projection to $f\inv(t)$ gives a surjective map
\begin{equation*}
    p: \mathscr{M}^{k, X \times \CC}_{J}(y_1, x_1, \dots, x_k, y_2) \to \mathscr{M}^{k, f\inv(t)}_{J}(y_1, x_1, \dots, x_k, y_2)
\end{equation*}
because of how the almost-complex structures have been defined. We claim that this is in fact a bijection, and that $\mathscr{M}^{k, X \times \CC}_{J}(y_1, x_1, \dots, x_k, y_2)$ is regular with respect to this almost-complex structure (as before, one can see that the induced orientations on these moduli spaces agree). To demonstrate this, suppose we have such a disk $(u, r) \in \mathscr{M}^{k, X \times \CC}_{J}(y_1, x_1, \dots, x_k, y_2)$ for $u:D \to X \times \CC$ and $r$ a \textit{fixed conformal class} of marked disks: then we can show that it is determined by its projection to $f\inv(t)$ as follows.

Let $\pi: f\inv(t) \times \CC^2 \to \CC^2$ be the (holomorphic) projection to the second factor. We consider the holomorphic map $\pi \circ u$ and we again wish to show that there is a unique holomorphic disk in the preimage under $p$ of any disk in $\mathscr{M}^{k, f\inv(t)}_{J}(y_1, x_1, \dots, x_k, y_2)$. Now since the conformal class $r$ is fixed, the disk $\pi \circ u$ no longer computes the $A_{\infty}$ operation $\mu^k$. This holomorphic disk $(\pi \circ u, r)$ still has boundary lying along the Lagrangians $T_i,U \subset \CC^2$ obtained by the projections $\pi(T^{(i)}L_i), \pi(\bigcup \ell)$, respectively, as above; and has corners at the intersection points $p_1 \in U \cap T_1, p_k \in U \cap T_k$ and the identity elements $a_i \in T_{i} \cap T_{i+1}$. 

Now choose $\gamma: [0,1) \to \bar{\scr{S}}_{k}$ a smooth simple path of conformal structures in the compactified moduli space of $(k+1)$-pointed disks that travels from the fixed conformal structure $r = \gamma(0)$ towards the deepest boundary stratum $r_0 = \lim_{t \to 1^{-}}\gamma(t)$ where $r$ degenerates in to a one-legged tree of $3$-pointed disks $r_0$ (see Figure \ref{fig:disk_tree}). We now consider the parametrized moduli space $\tilde{\mathscr{M}}^{k, \CC^2}_J(p_1, a_{1}, \dots, a_{k-1}, p_k)$ of $J$-holomorphic disks $(v,s)$ in $\CC^2$ with the same boundary conditions as $u$, but with conformal structure $s$ on the domain of $v$ allowed to vary along the path $\gamma$. The standard arguments show that for a generic almost-complex structure $J$ on $\CC^2$, this moduli space becomes a smooth $1$-manifold. Again applying the previous construction of an almost-complex structure on $X \times \CC$ with the modified $J$ in place of the standard almost-complex structure on $\CC^2$, we see that we can ensure that for any $(u,r) \in \mathscr{M}^{k, X \times \CC}_{J}(y_1, x_1, \dots, x_k, y_2)$, both $p(u,r)$ and $\pi(u,r)$ are regular also.

\begin{figure}
\centering
\begin{tikzpicture}[scale=0.4]
\begin{scope}[rotate=-45]
\filldraw[fill=gray!10] (0,0) circle (3);
\draw[fill=black] (60:3) circle (0.1);
\draw[fill=black] (-60:3) circle (0.1);
\begin{scope}[shift={(-5,0)}]
\filldraw[fill=gray!10] (0,0) circle (2);
\draw[fill=black] (0:2) circle (0.1);
\draw[fill=black] (180:2) circle (0.1);
\draw[fill=black] (-90:2) circle (0.1);
\end{scope}
\begin{scope}[shift={(-12,0)}]
\filldraw[fill=gray!10] (0,0) circle (2);
\draw[fill=black] (0:2) circle (0.1);
\draw[fill=black] (120:2) circle (0.1);
\draw[fill=black] (-120:2) circle (0.1);
\end{scope}
\draw[dashed] (-8.5,0) circle (1.5);
\draw[fill=black] (-8.5,-1.5) circle (0.1);
\draw (-8.75,0) node {$\ddots$};
\end{scope}
\end{tikzpicture}
    \caption{A one-legged tree of three-pointed holomorphic disks.\label{fig:disk_tree}}
\end{figure}
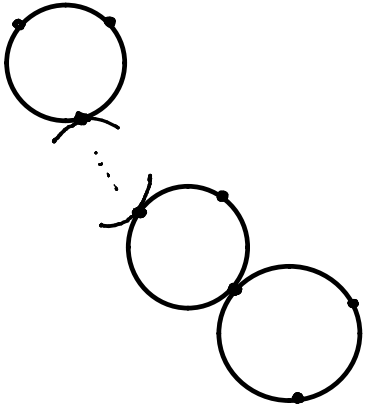

Now we consider the Gromov compactification of $\tilde{\mathscr{M}}^{k, \CC^2}_J(p_1, a_{1}, \dots, a_{k-1}, p_k)$. By the maximum principle applied to the fibration $\CC^2 \to \CC$, disks in this moduli space may not escape to infinity and so we may apply Gromov compactness. Exactness prohibits disk and sphere bubbling, and since all of $p_1, a_i, p_k$ are in the same degree, there can be no strip breaking. Thus the only boundary components of $\tilde{\mathscr{M}}^{k, \CC^2}_J(p_1, a_{1}, \dots, a_{k-1}, p_k)$ come from the conformal structures of the domain at either end of the path $\gamma$. At $0$, the conformal structure is $\gamma(0) = r$ and hence one part of the Gromov boundary of $\tilde{\mathscr{M}}^{k, \CC^2}_J(p_1, a_{1}, \dots, a_{k-1},p_k)$ is given by the moduli space of disks $(u,r)$ with fixed conformal structure $r$. On the other end, we have $\lim_{t \to 1^{-}}\gamma(t) = r_0$; this component of the boundary consists of disks that contribute to the iterated product operation:
\begin{equation*}
   \mu^2(\mu^2(\cdots \mu^2(a_1,a_2) \cdots), a_{k-1}), p_k)  = p_1
\end{equation*}
of $p_k \in U \cap T_{k}$ with the identity $a_i$ of $T_i \cong T_{i+1}$. As above, there is a unique holomorphic disk in $\CC^2$ representing each one of these product operations, and thus there is only one such disk. Hence, since $\tilde{\mathscr{M}}^{k, \CC^2}_J(p_1, a_{1}, \dots, a_{k-1}, p_k)$ is a $1$-manifold with boundary, there must be exactly one disk $(u,r)$ with conformal class $r$ under the projection $\pi$ to $\CC^2$. 
\end{proof}

We may now prove the main result.

\mainthm*

\begin{proof}
This follows by combining our previous results. To spell out the argument explicitly: 
\begin{itemize}
    \item By Corollary \ref{cor:alt_defn}, $\scr{W}(f\inv(0))$ is the quotient of $\scr{W}(f\inv(t))$ by the subcategory $\scr{D}\dash$ generated by Lagrangians of the form $\cap L$;
    \item By Proposition \ref{prop:generation}, this subcategory $\scr{D}\dash$ is the same as the subcategory generated by the Lagrangians $\cap \ell$ for $\ell$ additional cocores;
    \item By Theorem \ref{thm:aak}, $\scr{W}(f\inv(t))$ is equivalent to $\scr{W}(X \times \CC, z(f-t))$, and under this equivalence, generators $\cap \ell$ of the subcategory $\scr{D}\dash$ are sent to the objects $\bigcup \ell$ of the subcategory $\scr{D}$ by Theorem \ref{thm:cones};
    \item By Proposition \ref{prop:stops}, the quotient of $\scr{W}(X \times \CC, z(f-t))$ by the subcategory $\scr{D}$ yields $\scr{W}(X \times \CC, z f)$. Thus:
\end{itemize}
\begin{equation*}
    D^{\pi}\scr{W}(f\inv(0)) \cong D^{\pi}\scr{W}(f\inv(t))/\scr{D}\dash \cong D^{\pi} \scr{W}(X \times \CC, z(f-t))/\scr{D} \cong D^{\pi}\scr{W}(X \times \CC, z f)
.\end{equation*}
\end{proof}

\section{Applications to Mirror Symmetry}\label{sec:hms}

For the following applications to mirror symmetry we will make use of the following lemma from algebraic geometry. The proof is fairly simple and a sketch can be found in  \cite[p.5]{Subalgebras}. Suppose $X$ is a smooth quasiprojective algebraic variety, $L$ is a line bundle with a section $s$, and let $U = X \setminus s\inv(0)$. 

\begin{lemma}\label{lemma:complement}
Let $s: L\inv \otimes \to \mathrm{id}$ be the natural transformation given by multiplying by the section $s$. Then localizing at $s$ gives an equivalence of categories:
\begin{equation*}
    D^b \mathrm{Coh}(X)[s\inv] \cong D^b \mathrm{Coh}(U).
\end{equation*}
\end{lemma}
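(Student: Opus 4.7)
My approach would be to identify the localization $D^b\mathrm{Coh}(X)[s^{-1}]$ with the Verdier quotient of $D^b\mathrm{Coh}(X)$ by the thick subcategory $D^b_Z\mathrm{Coh}(X)$ of complexes with cohomology supported on $Z = s^{-1}(0)$, and then invoke the standard localization sequence for open immersions of smooth varieties.

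The first step is to construct the comparison functor. Let $j: U \hookrightarrow X$ denote the open inclusion. The restriction functor $j^{\ast}: D^b\mathrm{Coh}(X) \to D^b\mathrm{Coh}(U)$ is exact and sends the natural transformation $s: L^{-1} \otimes (-) \to \mathrm{id}$ to an isomorphism: indeed, $s|_U$ is a nowhere-vanishing section, so it defines an isomorphism $\mathcal{O}_U \xrightarrow{\sim} L|_U$, and tensoring with any $\mathcal{F}|_U$ yields an isomorphism. By the universal property of localization, $j^{\ast}$ then factors through a canonical functor
\begin{equation*}
    F: D^b\mathrm{Coh}(X)[s^{-1}] \to D^b\mathrm{Coh}(U).
\end{equation*}

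The second step is to identify the kernel of the localization. The cone of $s: L^{-1} \otimes \mathcal{F} \to \mathcal{F}$ is $\mathcal{F} \otimes^L \mathrm{cone}(L^{-1} \xrightarrow{s} \mathcal{O}_X)$, and this cone is a complex whose cohomology is set-theoretically supported on $Z$. Hence cones of $s$ lie in the thick subcategory $D^b_Z\mathrm{Coh}(X)$. Conversely, taking $\mathcal{F} = \mathcal{O}_W$ for $W \subset X$ closed produces a complex whose cohomology sheaves compute $\mathcal{T}or$ between $\mathcal{O}_W$ and the Koszul-type resolution associated to $s$, and in particular recovers structure sheaves of subschemes of $W \cap Z$ after a devissage. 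A standard devissage argument on $D^b_Z\mathrm{Coh}(X)$ (reducing to structure sheaves of closed subschemes of $Z$) then shows that the thick subcategory generated by cones of $s$ is exactly $D^b_Z\mathrm{Coh}(X)$. Therefore localizing at $s$ coincides with the Verdier quotient by $D^b_Z\mathrm{Coh}(X)$.

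The third step is to invoke the localization sequence of Thomason--Trobaugh (or equivalently Rouquier), which in the smooth case gives an equivalence
\begin{equation*}
    D^b\mathrm{Coh}(X) / D^b_Z\mathrm{Coh}(X) \xrightarrow{\sim} D^b\mathrm{Coh}(U)
\end{equation*}
induced by $j^{\ast}$. Combining this with the identification above gives the desired equivalence $F$.

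The main technical obstacle is the second step: showing precisely that the cones of $s$ on arbitrary objects split-generate $D^b_Z\mathrm{Coh}(X)$, rather than only a strictly smaller subcategory. The key point to verify is that by letting $\mathcal{F}$ range over all of $D^b\mathrm{Coh}(X)$ (in particular over structure sheaves of closed subschemes meeting $Z$ in arbitrary ways), one obtains enough building blocks to recover every object of $D^b_Z\mathrm{Coh}(X)$ via cones and shifts; this should follow formally from the fact that $\mathcal{O}_Z$ itself is such a cone (take $\mathcal{F} = \mathcal{O}_X$, using that $s$ is a regular section since $Z$ is a Cartier divisor) together with the fact that $\mathcal{O}_Z$ generates $D^b_Z\mathrm{Coh}(X)$ as a thick subcategory on a smooth $X$.
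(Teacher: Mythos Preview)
Your approach is correct and is the standard argument. The paper itself does not supply a proof of this lemma; it merely remarks that the proof is simple and refers the reader to \cite[p.5]{Subalgebras} for a sketch, so there is no detailed argument in the paper to compare against. Your three steps---factoring $j^{\ast}$ through the localization via its universal property, identifying the thick subcategory generated by cones of $s$ with $D^b_Z\mathrm{Coh}(X)$, and then invoking the localization sequence for the open immersion $j$---constitute exactly the expected proof.

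Regarding the point you flag as the main technical obstacle in step two, there is a slightly cleaner route than devissage on structure sheaves. Reduce first (by the truncation filtration) to the case where $\mathcal{G}$ is a single coherent sheaf set-theoretically supported on $Z$. Then some power $s^n$ annihilates $\mathcal{G}$, so the morphism $s^n: L^{-n}\otimes\mathcal{G} \to \mathcal{G}$ is zero and its cone splits as $\mathcal{G} \oplus (L^{-n}\otimes\mathcal{G})[1]$. On the other hand, applying the octahedral axiom to the factorization $s^n = s \circ s^{n-1}$ shows inductively that $\mathrm{cone}(s^n)$ on any object is an iterated extension of cones of $s$. Hence $\mathcal{G}$ is a summand of an object built from cones of $s$, which is what you need. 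This argument does not require $s$ to be a regular section or $X$ to be smooth at this stage; smoothness (or more precisely, that $X$ has enough locally frees and the restriction functor is essentially surjective on bounded complexes) is only used in step three.
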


A heuristic way to interpret this lemma is that localization gives the connection between removing a divisor on one side of mirror symmetry and degenerating to a special fiber on the other.

\subsection{The Tower of Pants}\label{sec:pants}

The Landau-Ginzburg model $(\CC^{n+1}, W_n= z_1 \cdots z_{n+1})$ has particular importance in mirror symmetry as it arises as the mirror to the higher-dimensional pair of pants which we denote by $\Pi_{n-1} = \set{x_1 + \dots + x_n + 1 = 0} \subset (\CC^{\ast})^{n}$: see for instance \cite{NadlerI, AAK}. In this section, we will sketch another proof of mirror symmetry and periodicity in this special case, drawing upon work of Abouzaid-Auroux \cite{AA}.

In the case of this LG model, the general fiber $W_n\inv(1)$ is given by the complex torus $(\CC^{\ast})^{n}$, for which we know that the wrapped Fukaya category is generated by the single Lagrangian $L = (\RR_{+})^{n} \subset (\CC^{\ast})^{n}$, with endomorphism algebra given by $\CC[x_{1}^{\pm}, \dots, x_{n}^{\pm}]$. We shall use the following results proved by Abouzaid-Auroux:

\begin{theorem*}(Abouzaid-Auroux \cite{AA})
The category $\scr{W}(\CC^{n+1}, z_1 \cdots z_{n+1})$ is generated by the single Lagrangian $\cup L$. Moreover, Seidel's natural transformation $s$ on the general fiber of this LG model corresponds to a morphism $L \to L$ given by multiplication by $x_1 + \dots + x_{n} + 1$. 
\end{theorem*}
Each of the terms in the sum $x_1 + \cdots + x_n+1$ in the natural transformation corresponds to a count of holomorphic curves passing through one of the hyperplanes $z_i=0$ in the singular fiber $z_1 \dots z_{n+1} =0$ of the LG model. 

\begin{corollary}
We have quasiequivalences of categories:
\begin{equation*}
    D^{\pi}\scr{W}(W_{n}\inv(0))  \simeq  D^{\pi}\scr{W}(\CC^{n+2}, z_1 \dots z_{n+2}) \simeq D^b\mathrm{Coh}(\Pi_n)
.\end{equation*}
\end{corollary}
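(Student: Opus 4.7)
The plan is to prove the two equivalences by independent routes: the leftmost is a direct application of Theorem \ref{thm:main_thm}, while the rightmost (or equivalently the composite) requires unwinding Definition \ref{defn:auroux} and combining the Abouzaid-Auroux identification of $s$ with Lemma \ref{lemma:complement}.

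First, for $D^{\pi}\scr{W}(W_n\inv(0)) \simeq D^{\pi}\scr{W}(\CC^{n+2}, z_1 \cdots z_{n+2})$, I would apply Theorem \ref{thm:main_thm} directly with $X = \CC^{n+1}$ and $f = W_n = z_1 \cdots z_{n+1}$. The hypotheses are satisfied, since $W_n$ is a polynomial on affine space with a single critical fiber over $0$; relabeling the auxiliary coordinate $z$ as $z_{n+2}$ yields exactly the displayed LG model $z_1 \cdots z_{n+2}$ on $\CC^{n+2}$.

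Next, to establish $D^{\pi}\scr{W}(W_n\inv(0)) \simeq D^b\mathrm{Coh}(\Pi_n)$, which combines with the previous step to give the second displayed equivalence, I would unwind Definition \ref{defn:auroux} directly. The general fiber is $W_n\inv(t) \cong (\CC^{\ast})^n$, whose wrapped Fukaya category is generated by the single Lagrangian $L = (\RR_+)^n$ with $\mathrm{End}(L) \cong \CC[x_1^{\pm}, \dots, x_n^{\pm}]$; standard HMS for the algebraic torus identifies this with $D^b\mathrm{Coh}((\CC^{\ast})^n)$, sending $L$ to the structure sheaf. The Abouzaid-Auroux theorem quoted just above identifies Seidel's natural transformation $s$ with multiplication by the Laurent polynomial $x_1 + \cdots + x_n + 1$. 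Consequently
\begin{equation*}
D^{\pi}\scr{W}(W_n\inv(0)) = D^{\pi}\scr{W}((\CC^{\ast})^n)[s\inv] \simeq D^b\mathrm{Coh}((\CC^{\ast})^n)[s\inv] \simeq D^b\mathrm{Coh}\bigl((\CC^{\ast})^n \setminus \set{x_1 + \cdots + x_n + 1 = 0}\bigr),
\end{equation*}
where the last step is Lemma \ref{lemma:complement}. Projection onto the first $n$ coordinates of $(\CC^{\ast})^{n+1}$ (with the last coordinate recovered as $y_{n+1} = -(y_1 + \cdots + y_n + 1)$, necessarily nonzero on $\Pi_n$) identifies $\Pi_n$ with this complement, so the right-hand side is $D^b\mathrm{Coh}(\Pi_n)$.

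The main obstacle is the Abouzaid-Auroux identification itself: one needs not only the correct endomorphism algebra for the torus, but also that the clockwise monodromy functor $\mu$ becomes (isomorphic to) the identity on $\scr{W}((\CC^{\ast})^n)$ and that $s$ corresponds, through the HMS mirror functor for the torus, to multiplication by precisely this Laurent polynomial (rather than some unit multiple). Given this forthcoming input and Lemma \ref{lemma:complement}, the remainder of the argument is formal.
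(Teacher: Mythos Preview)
Your argument is correct, and your treatment of $D^{\pi}\scr{W}(W_n\inv(0)) \simeq D^b\mathrm{Coh}(\Pi_n)$ via Definition \ref{defn:auroux}, the Abouzaid--Auroux identification of $s$, and Lemma \ref{lemma:complement} matches the paper's first paragraph essentially verbatim.

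The genuine difference is in how you obtain the equivalence with $D^{\pi}\scr{W}(\CC^{n+2}, z_1 \cdots z_{n+2})$. You invoke Theorem \ref{thm:main_thm}, which is certainly valid. The paper, however, deliberately avoids this: the whole point of \S\ref{sec:pants} is to give \emph{another proof} of periodicity in this special case, independent of the main theorem. Instead the paper computes $\scr{W}(\CC^{n+2}, z_1 \cdots z_{n+2})$ directly, using the Abouzaid--Auroux generation statement that $\cup L$ generates. By pushing $\cup L$ off itself (Figure \ref{fig:hms}), one sees that $\mathrm{End}(\cup L)$ is the cone of multiplication by $x_1 + \cdots + x_{n+1} + 1$ on $\CC[x_1^{\pm}, \dots, x_{n+1}^{\pm}]$, i.e.\ the quotient ring, whose module category is $D^b\mathrm{Coh}(\Pi_n)$. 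Your route is shorter and perfectly legitimate as a corollary of the main theorem; the paper's route buys an independent verification of Kn\"orrer periodicity for the tower of pants, relying only on the Abouzaid--Auroux input and not on the Liouville-geometric machinery of \S\S\ref{sec:stein}--\ref{sec:proofs}.
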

\begin{proof}
Using the localization definition, the first category is given by the localization of the category of $\CC[x_{1}^{\pm}, \dots, x_{n}^{\pm}]$-modules at the natural transformation $\mathrm{id} \to \mathrm{id}$ given by multiplication by $x_1 + \dots + x_{n} + 1$. This is the same as the category of modules over the localization of $\CC$-algebras of $\CC[x_{1}^{\pm}, \dots, x_{n}^{\pm}]$ at $x_1 + \dots + x_{n} + 1$, that is, the category of coherent sheaves on $\set{x_1 + \dots + x_n + 1 \neq 0} \subset (\CC^{\ast})^n$: but this is exactly $\set{x_1 + \dots + x_n  + 1 = -x_{n+1}} \subset (\CC^{\ast})^{n+1}$, the pair of pants $\Pi_{n}$.

The second category can be given by computing the endomorphisms of the generator $\cup L$. Since $\cup L$ is given by applying a $\cup$ functor, we may push it off itself to see that it has endomorphism algebra given by the cone of the map $\mathrm{End}(L) \to \mathrm{End}(L)$ given by multiplication by the natural transformation $x_1 + \dots + x_{n+1} + 1$ (see Figure \ref{fig:hms}). Hence $\scr{W}(\CC^{n+2}, z_1 \dots z_{n+2})$ is given by the category of modules over the quotient of $\CC[x_{1}^{\pm}, \dots, x_{n+1}^{\pm}]$ by the ideal $(x_1 + \dots + x_{n+1} + 1)$. But this is exactly the category of coherent sheaves on $\set{x_1 + \dots + x_{n+1} + 1 = 0} \subset (\CC^{\ast})^{n+1}$, that is, the pair of pants $\Pi_{n}$.
\end{proof}

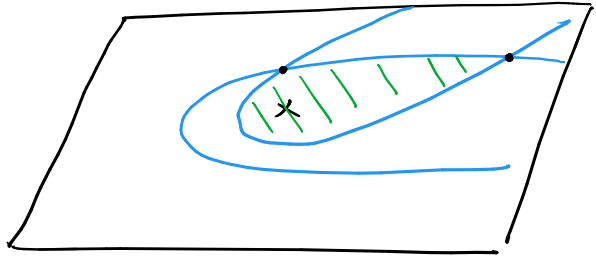
\begin{figure}
\centering
\begin{tikzpicture}[scale=0.5]
\filldraw[thick,blue,fill=green!10] (0.5,-1.9) to[out=5,in=-90] (2,2) to[out=170,in=90] (-1,0) to[out=-90,in=-180] (0.5,-1.9);
\draw (-5,-4) -- (5,-4) -- (5,4) -- (-5,4) -- (-5,-4);
\draw (0,0) node {$\huge{\times}$};
\draw[thick,blue] (-2,4) -- (-2,0) to[out=270,in=180] (0.5,-1.9);
\draw[thick,blue] (2,2) -- (2,4);
\draw[thick,blue] (2,2) -- (5,2);
\draw[thick,blue] (0.5,-1.9) -- (5,-1.9);
\draw[fill=black] (2,2) circle (0.1);
\draw[fill=black] (0.5,-1.9) circle (0.1);
\end{tikzpicture}
    \caption{Calculating endomorphisms of a $\cup$-shaped object.\label{fig:hms}}
\end{figure}

It should be possible to extend this proof to complements of hypersurfaces in toric varieties, following the ideas of Abouzaid-Auroux, as follows. Suppose $H \subset V$ is a hypersurface in a toric variety $V$ defined by a section $s \in \Gamma(V, \scr{O}(H))$. The mirror to $H \subset (\CC^{\ast})^n$ is given by a toric LG model $(Y, W_Y)$ \cite{Speculations}; compactifying $H \subset (\CC^{\ast})^n$ to $H \subset V$ adds extra terms to the superpotential, so that the mirror is $(Y,W_Y + \delta W)$: here $W$ is considered as a secondary superpotential used to wrap fiberwise, as in \S \ref{sec:proofs}. 

\begin{theorem*}(Abouzaid-Auroux \cite{AA})
The Fukaya-Seidel category $D^{\pi}\scr{W}(W_{Y}\inv(t), W)$ of the general fiber is quasiequivalent to $D^b\mathrm{Coh}(V)$. Moreover, the action of the monodromy $\mu$ on $(W_{Y}\inv(t), W)$ is mirror to tensoring by the line bundle $\scr{O}(-H)$, and the natural transformation $\mu \to \mathrm{id}$ is mirror to the defining section $s: \scr{O}(-H) \to \scr{O}_V$.
\end{theorem*}

Hence, an LG version of our definition allows us to conclude that $D^{\pi}\scr{W}(W_{Y}\inv(0), W)$ is equivalent to $D^b \mathrm{Coh}(V\setminus H)$ using Lemma \ref{lemma:complement}. Moreover, using Definition \ref{defn:ci} it should be possible to generalize this to the case of complete intersections in toric varieties considered in the sequel to \cite{AA}. 

\subsection{Applications to Curves}\label{sec:curves}

Definition \ref{defn:auroux} is often quite straightforward to compute in practice, as we can see from a simple example.

\begin{example}
Consider the standard Lefschetz fibration $X = \CC^2$ and $f = x y$; let us directly compute the Fukaya category $\scr{W}(\set{xy=0})$ of the nodal conic. By Lemma \ref{lemma:split}, to find $\scr{W}(\set{xy=0})$, we just need to quotient $\scr{W}(\set{xy=t})$ by the image of $\cap$. But in this case, the only object to which we can apply $\cap$ is the standard thimble of $(\CC^2, xy)$, and $\cap$ simply gives the vanishing cycle $V \subset \set{xy=t}$. Hence $\scr{W}(\set{xy=0}) = \scr{W}(\set{xy=t})/\langle V \rangle$. Under mirror symmetry of $D^{\pi}\scr{W}(\set{xy=t})$ with $D^b\mathrm{Coh}(\CC^{\ast})$, $V$ corresponds to the skyscraper sheaf of a point $p \neq 0$, and the quotient $D^b\mathrm{Coh}(\CC^{\ast})/ \langle \scr{O}_p \rangle$ simply gives $D^b\mathrm{Coh}(\CC^{\ast}\setminus\set{p})$, which is quasiequivalent to the coherent sheaves on the pair of pants $D^b \mathrm{Coh}(\Pi_1)$, as expected from the above. \QEDB
\end{example}

In this case, we may furthermore use the explicit formulas for the localization of $A_{\infty}$ categories in \cite{LO} to compute the full $A_{\infty}$ structure of $\scr{W}(\set{xy=0})$. We shall have more to say about this in future work. 


Now, we consider the case of genus-$1$ curves with singularities. Consider the map $f: X \to \CC$ given by the Tate family of elliptic curves, with $f\inv(0)$ being an elliptic curve with a single node. We know by \cite{elliptic, LP2} and others that the Fukaya category of the general fiber $\mathscr{F}(f\inv(t))$ is derived equivalent to the category of coherent sheaves $\mathrm{Coh}(E)$ on a mirror elliptic curve $E$. In this case, the monodromy around this singularity is given by the negative Dehn twist around the corresponding vanishing cycle, and takes the Lagrangian $L \subset f\inv(t)$, mirror to $\scr{O}_E$, to the slope $(-1)$ Lagrangian $\mu(L) \subset f\inv(t)$, mirror to a degree $(-1)$ line bundle $\scr{L}\inv$ on $E$ \cite{elliptic}. Since the monodromy functor is given by translating fiberwise by $\mu(L)$, under this mirror equivalence $\mu$ is mirror to tensoring by this line bundle $\scr{L}\inv$ (see \S \ref{sec:gs} below for a discussion). Hence the natural transformation $\mu \to \mathrm{id}$ is mirror to a morphism $\scr{L}\inv \to \scr{O}_E$, that is, a section $s$ of the dual line bundle $\scr{L}$. Hence we may compare the localization of $\scr{W}(f\inv(t))$ at this natural transformation with that of $\mathrm{Coh}(E)$. On one hand, by Definition \ref{defn:auroux}, this localization of $\mathscr{F}(f\inv(t))$ yields $\mathscr{F}(f\inv(0))$, the Fukaya category of the nodal curve. On the mirror, by Lemma \ref{lemma:complement}, localization of $D^b \mathrm{Coh}(E)$ at the natural transformation $\scr{L}\inv \otimes \to \mathrm{id}$ given by multiplication by a section $s$ gives exactly the derived category of coherent sheaves of the complement, $D^b \mathrm{Coh}(E\setminus s\inv(0))$. Hence, we have a mirror equivalence between the wrapped Fukaya category of the nodal elliptic curve, and the derived category of coherent sheaves of a once-punctured elliptic curve. This argument may be generalized to show that the Fukaya categories of elliptic curves with more nodes are derived equivalent to elliptic curves with the corresponding number of punctures, by considering the monodromy of the $n$-nodal degeneration of elliptic curves considered by Gross-Siebert \cite{Chain}.

This can be generalized to the case of punctured elliptic curves with nodes by considering $f: X \to \CC$ the affine Tate family of $n$-punctured elliptic curves, with $f\inv(0)$ being an elliptic curve with a single node and $n$ punctures. Using the mirror equivalence of \cite{LP2, Lekili} for the general fiber, we can obtain mirror derived equivalences between wrapped Fukaya categories of elliptic curves with $n$ punctures and $m$ nodes, and derived categories of coherent sheaves of elliptic curves with $m$ punctures and $n$ nodes (using a stronger version of Lemma \ref{lemma:complement}). In this case, since $f$ satisfies the hypotheses of Theorem \ref{thm:main_thm} we have an additional mirror equivalence between the  coherent sheaves of elliptic curves with $m$ punctures and $n$ nodes with the Fukaya-Seidel category of an LG mirror $(X \times \CC, z f)$.

\subsection{Generalizations}\label{sec:gs}

The above examples for elliptic curves are special cases of a more general relationship between large complex structure limits and homological mirror symmetry for complex tori. Let us begin by recalling the Gross-Siebert setup in the context of toric degenerations of abelian varieties (see \cite[\S 2]{theta}). 

Let $M$ be a rank $n$ lattice, let $N = \mathrm{Hom}_{\ZZ}(M, \ZZ)$ be the dual lattice, and let $\Gamma \subset M$ be a rank-$n$ sublattice. The quotient $B = M_{\RR}/\Gamma$ naturally has the structure of an integral affine manifold. Suppose $\mathscr{P}$ is a $\Gamma$-periodic polyhedral decomposition of $M_{\RR}$ and $\varphi:M_{\RR} \to \RR$ be a strictly convex piecewise-linear function with integral slopes such that  for any $\gamma \in \Gamma$,
\begin{equation*}
    \varphi(m + \gamma) = \varphi(m) + \alpha_{\gamma}(m) 
\end{equation*}
where $\alpha_{\gamma}:M_{\RR} \to \RR$ is some integral affine function. Let $\check{\Gamma}$ be the lattice $\set{ \dd \alpha_{\gamma}(m): \; \; \gamma \in \Gamma}$ in $N_{\RR}$; then the quotient $\check{B} = N_{\RR}/\check{\Gamma}$ also has the structure of an integral affine manifold. The discrete Legendre transform (as described in \cite{logi}) yields $\check{\mathscr{P}}$ a dual $\check{\Gamma}$-periodic polyhedral decomposition of $N_{\RR}$ and a strictly-convex piecewise-linear function $\check{\varphi}: N_{\RR} \to \RR$. Let $X(B)$ be the quotient $TB/T^{\ZZ}B$ of the tangent bundle by the subbundle of integral vectors, and let $X^{\ast}(B)$ be the quotient $T^{\ast}B/T^{\ast}_{\ZZ}B$ of the cotangent bundle by the subbundle of integral covectors (and likewise for $\check{B}$). 

The convex function $\varphi$ induces an ample line bundle $\scr{L}_{\varphi}$ on $X(B)$ via a version of a construction of Mumford (\cite{mumford}, cf. \cite[\S 2]{theta}). There is a natural complex structure on $X(B)$ and this ample line bundle makes $X(B)$ into a K\"ahler manifold, symplectomorphic to the natural symplectic structure on $X^{\ast}(\check{B})$ coming from the canonical symplectic form on the cotangent bundle $T^{\ast}\check{B}$. We can construct $\scr{L}_{\varphi}$ explicitly as a quotient $M_{\RR} \times \CC/\Gamma$, where the action of $\gamma \in \Gamma$ is given by:
\begin{equation*}
    \gamma \cdot (m, z) = (m + \gamma, z \; \e{\dd \alpha_{\gamma}(m) + c_{\gamma}})
\end{equation*}
where $c_{\gamma}$ is the difference $\alpha_{\gamma}(m) - \dd \alpha_{\gamma}(m)$. This construction of Mumford yields toric degenerations $\scr{X}_{\varphi} \to D$ of $X(B)$ and $\check{\scr{X}}_{\check{\varphi}} \to D$ of $X(\check{B})$ over the unit disk $D \subset \mathbb{A}^1$ that are a Gross-Siebert mirror pair. A \textit{large volume limit} is the complement $X(B) \setminus s\inv(0)$ where $s$ is a section of the ample line bundle $\scr{L}_{\varphi}$, while the \textit{large complex structure limit} is the central fiber $\scr{X}_{\varphi}^0$ of the toric degeneration.

\newtheorem*

\begin{proof}
The Gross-Siebert construction of this toric degeneration in \cite{logi} has the property that the (counterclockwise) monodromy $\check{\mu}\inv: X(\check{B}) \to X(\check{B})$ around $0$ of the toric degeneration $\check{\scr{X}}_{\check{\varphi}} \to D$ is given by translation by the graph of the developing map of $\check{B}$ \cite[p.79]{logii}. The developing map is an integral affine immersion $\delta: \tilde{\check{B}} \to N_{\RR}$ from the integral affine universal cover $\tilde{\check{B}}$ to $N_{\RR}$, so that its graph $\sigma_1 \subset \tilde{\check{B}} \times N_{\RR}$ is given in local integral affine coordinates by $(y, y)$ \cite[p.13]{logi}. Under the Legendre transform, the developing map for $\check{B}$ is induced by the differential $\dd\varphi: \tilde{B} \to N_{\RR}$ of the multivalued convex function $\varphi$ on $B$ \cite[p.14]{logi}. Under the identification of $X(\check{B})$ with $X^{\ast}(B)$, the graph of the developing map for $\check{B}$ is given by the graph of $\dd \varphi$ in $T^{\ast}B$. 

Let $(F_m, \xi)$ denote the fiber of $X^{\ast}(B)$ over $[m] \in M_{\RR}$ equipped with a $U(1)$-local system $\xi$, as an object of the Fukaya category. The pair $(m,\xi)$ determines a point $(m, \check{\xi})$ in the mirror $X(B)$, where $\check{\xi}$ is the dual local system. The family Floer functor of Abouzaid \cite{FFT} sends the zero section in $X^{\ast}(B)$ to the structure sheaf $\scr{O}_{X(B)}$ and sections $L$ of $X^{\ast}(B)$ to line bundles $\scr{L}$ on $X(B)$. Under the family Floer functor, $\Gamma_{\dd \varphi}$ is taken to a line bundle $\scr{L}$ on $X(B)$, where the Floer complex $\mathrm{CF}((F_{m},\xi), \Gamma_{\dd \varphi})$ gives the stalk at $(m, \check{\xi}) \in X(B)$. Since $\Gamma_{\dd \varphi}$ is the graph of a section, the intersection $\Gamma_{\dd \varphi} \cap F_m$ consists of a single point $x_m$. Under the family Floer construction, the stalks over $(m, \check{\xi})$ and $(m + \gamma, \check{\xi})$ are identified via the change-of-basepoint map that takes
\begin{equation*}
    x_m \mapsto x_{m + \gamma}\; \e{\int_{u_{\gamma}} \omega}
\end{equation*}
where $u_{\gamma}$ represents the holomorphic polygon bounded by the zero section, the fibers $F_{m}, F_{m + \gamma}$, and the section $\Gamma_{\dd \varphi}$, as described in \cite[\S 2]{theta}. Since $X^{\ast}(B)$ carries a symplectic structure induced by the canonical $1$-form on $T^{\ast}B$, the area under the graph $\Gamma_{\dd \varphi}$ is given simply by 
\begin{equation*}
   \int_{u_{\gamma}} \omega =  \int_{[m, m + \gamma]} \dd \varphi = \varphi(m + \gamma) - \varphi(m) = \alpha_{\gamma}(m) = \dd \alpha_{\gamma}(m) + c_{\gamma}
\end{equation*}
which are exactly the transition functions for the fibers of $\scr{L}_{\varphi}$. Therefore the family Floer functor takes $\Gamma_{\dd \phi}$ to $\scr{L}_{\varphi}$. Moreover, since translating a Lagrangian by a section of $X^{\ast}(B)$ corresponds under the family Floer functor to tensoring by the corresponding line bundle $\scr{L}$, the functor also takes $\Gamma_{k \;\dd \phi}$ to $\scr{L}_{\varphi}^{\otimes k}$.

Abouzaid in \cite{FFT} proves that the family Floer functor gives an equivalence of $\mathscr{F}(X^{\ast}(B))$ with a full subcategory $\scr{C}$ of $D^b \mathrm{Coh}(X(B))$. We have seen that this subcategory $\scr{C}$ contains $\scr{L}_{\varphi}$ and all of its powers. By a theorem of Orlov \cite[Theorem 4]{orlov_generation}, powers of $\scr{L}_{\varphi}$ split-generate $D^b \mathrm{Coh}(X(B))$ and so $D^b \mathrm{Coh}(X(B))$ is the split-closure of $\scr{C}$. Therefore the split-closure of $\mathscr{F}(X(\check{B}))$ is equivalent to $D^b \mathrm{Coh}(X(B))$ under the family Floer functor. Since the monodromy $\hat{\mu}$ is given by translation by a section $\Gamma_{\dd \varphi}$ that is sent to the line bundle $\scr{L}_{\varphi}$, the family Floer functor sends the monodromy functor $\hat{\mu}\inv$ to tensoring by $\scr{L}_{\varphi}$. 

Therefore, Seidel's natural transformation $\hat{\mu} \to \mathrm{id}$ coming from monodromy around the large complex structure limit corresponds under our choice of homological mirror symmetry equivalence to a natural transformation $\scr{L}_{\varphi}^{-1}\otimes \to \mathrm{id}$ given by multiplication by some section $s: \scr{O}_{X(B)} \to \scr{L}_{\varphi}$. Now we can compare the localizations on both sides: by Lemma \ref{lemma:complement}, the localization of $D^b\mathrm{Coh}(X(B))$ at this natural transformation gives the coherent sheaves on the complement, $D^b \mathrm{Coh}(X(B)\setminus s\inv(0))$. Since $s$ is a section of $\scr{L}_{\varphi}$, the divisor $s\inv(0)$ is dual to the K\"ahler form $\omega_{X(B)}$, and so $X(B)\setminus s\inv(0)$ is a large-volume limit along $\omega_{X(B)}$. Since the total space of the large complex structure limit family is smooth \cite[p.26]{Chain}, on the A-side, by Definition \ref{defn:auroux}, we know that the localization of $\mathscr{F}(X(\check{B}))$ at the natural transformation given by monodromy yields $\mathscr{F}(\check{\scr{X}}_{\check{\varphi}}^0)$, the Fukaya category of the singular central fiber. Therefore we have a homological mirror symmetry quasiequivalence:
\begin{equation*}
    D^{\pi} \mathscr{F}(\check{\scr{X}}_{\check{\varphi}}^0) \simeq D^b \mathrm{Coh}(\check{X}\setminus s\inv(0))
\end{equation*}
between the large complex structure limit and the large volume limit.
\end{proof}

Following the last steps of the proof above, we also have the conditional result:

\lcsl*


A homological mirror symmetry equivalence of the form required to apply Theorem \ref{thm:lcsl} is expected to follow from combining Abouzaid's family Floer theory with ideas from the Gross-Siebert program (cf. \cite{theta}) in the same manner in the case where the Lagrangian torus fibration has singularities. Under the map $ \tilde{B} \times M_{\RR} \mapsto TB/T^{\ZZ} B$, the developing map is sent to the section:
\begin{equation*}
    y \mapsto   \sum_{i=1}^{n} y_i \pd{}{y_i}
\end{equation*}
which is exactly the canonical section $\sigma_1$ described in \cite{theta}. In \cite[\S 4]{theta}, it is explained that the family Floer functor should take the graph of $\sigma_1$ to the (inverse of) the ample line bundle $\scr{L}_{\varphi}$ giving the K\"ahler form on the mirror. Therefore a result similar to Theorem \ref{thm:new_theorem} should also hold in this case with a suitable modification of Definition \ref{defn:auroux}.

\nocite{*}
\bibliography{biblio}{}
\bibliographystyle{amsalpha}

\end{document}